\date{\today}
\def\theequation{\@arabic\c@equation}
\newcommand{\bb}[1]{{\mathbb{#1}}}
\newcommand{\mc}[1]{{\mathcal{#1}}}
\newcommand{\oT}{H}
\newcommand{\D}{\mathbb{D}}
\newcommand{\bbN}{{\mathbb{N}}}
\newcommand{\bbR}{{\mathbb{R}}}
\newcommand{\bbC}{{\mathbb{C}}}
\newcommand{\cB}{{\mathcal B}}
\newcommand{\cD}{{\mathcal D}}
\newcommand{\cE}{{\mathcal E}}
\newcommand{\cH}{{\mathcal H}}
\newcommand{\cK}{{\mathcal K}}
\newcommand{\cM}{{\mathcal M}}
\newcommand{\cX}{{\mathcal X}}
\newcommand{\no}{\nonumber}
\newcommand{\lb}{\label}
\newcommand{\f}{\frac}
\newcommand{\ol}{\overline}
\newcommand{\bs}{\backslash}
\newcommand{\wti}{\widetilde}
\newcommand{\Oh}{O}
\newcommand{\loc}{\text{\rm{loc}}}
\newcommand{\ran}{\operatorname{ran}}
\newcommand{\dom}{\operatorname{dom}}
\newcommand{\supp}{\operatorname{supp}}
\renewcommand{\Re}{\operatorname{Re}}
\renewcommand{\Im}{\operatorname{Im}}
\renewcommand{\oint}{\ointctrclockwise}
\newcommand{\e}{\hbox{\rm e}}
\newcommand{\bi}{\bibitem}
\newcommand{\dD}{{\partial\hspace*{.2mm}\mathbb{D}}}
\DeclareMathOperator*{\slim}{s-lim}
\DeclareMathOperator*{\wlim}{w-lim}
\numberwithin{equation}{section}
\newtheorem{theorem}{Theorem}[section]
\newtheorem{lemma}[theorem]{Lemma}
\newtheorem{corollary}[theorem]{Corollary}
\newtheorem{hypothesis}[theorem]{Hypothesis}
\theoremstyle{definition}
\newtheorem{definition}[theorem]{Definition}
\newtheorem{remark}[theorem]{Remark}
\begin{document}

\title[Weyl--Titchmarsh Theory and Operator-Valued Potentials]{Initial Value Problems 
and Weyl--Titchmarsh Theory for Schr\"odinger Operators with Operator-Valued 
Potentials}
\author[F.\ Gesztesy, R.\ Weikard, and M.\ Zinchenko]{Fritz
Gesztesy, Rudi Weikard, and Maxim Zinchenko}

\address{Department of Mathematics,
University of Missouri, Columbia, MO 65211, USA}
\email{gesztesyf@missouri.edu}
\urladdr{http://www.math.missouri.edu/personnel/faculty/gesztesyf.html}

\address{Department of Mathematics, University of
Alabama at Birmingham, Birmingham, AL 35294, USA}
\email{rudi@math.uab.edu}
\urladdr{http://www.math.math.uab.edu/\~{}rudi/index.html}

\address{Department of Mathematics,
University of Central Florida, Orlando, FL 32816, USA}
\email{maxim@math.ucf.edu}
\urladdr{http://www.math.ucf.edu/~maxim/}

\date{\today}
\thanks{Based upon work partially supported by the US National Science
Foundation under Grant No.\ DMS-DMS 0965411.}
\subjclass[2010]{Primary: 34A12, 34B20, 34B24. Secondary: 47E05.}
\keywords{Weyl--Titchmarsh theory, ODEs with operator coefficients, 
Schr\"odinger operators.}

\begin{abstract}
We develop Weyl--Titchmarsh theory for self-adjoint Schr\"odinger operators 
$H_{\alpha}$ in $L^2((a,b);dx;\cH)$ associated with the operator-valued differential 
expression $\tau =-(d^2/dx^2)+V(\cdot)$, with $V:(a,b)\to\cB(\cH)$, and $\cH$ a 
complex, separable Hilbert space. We assume regularity of the left endpoint $a$ and 
the limit point case at the right endpoint $b$. In addition, the bounded self-adjoint 
operator $\alpha= \alpha^* \in \cB(\cH)$ is used to parametrize the self-adjoint 
boundary condition at the left endpoint $a$ of the type 
$$
\sin(\alpha)u'(a)+\cos(\alpha)u(a)=0, 
$$ 
with $u$ lying in the domain of the underlying maximal operator $H_{\max}$ in 
$L^2((a,b);dx;\cH)$ associated with $\tau$. More precisely, we establish the 
existence of the Weyl--Titchmarsh solution of $H_{\alpha}$, the corresponding 
Weyl--Titchmarsh $m$-function $m_{\alpha}$ and its Herglotz property, and 
determine the structure of the Green's function of $H_{\alpha}$.

Developing Weyl--Titchmarsh theory requires control over certain (operator-valued) solutions of appropriate initial value problems. Thus, we consider existence and 
uniqueness of solutions of 2nd-order differential equations with the operator coefficient 
$V$, 
$$
\begin{cases}
- y'' + (V - z) y = f \, \text{ on } \, (a,b),  \\
\, y(x_0) = h_0, \; y'(x_0) = h_1,  
\end{cases}    
$$
under the following general assumptions:  $(a,b)\subseteq\bbR$ is a finite 
or infinite interval, $x_0\in(a,b)$, $z\in\bbC$, $V:(a,b)\to\cB(\cH)$ is a weakly 
measurable operator-valued function with 
$\|V(\cdot)\|_{\cB(\cH)}\in L^1_\loc((a,b);dx)$, and $f\in L^1_{\loc}((a,b);dx;\cH)$, 
with $\cH$ a complex, separable Hilbert space. We also study the analog of 
this initial value problem with $y$ and $f$ replaced by operator-valued 
functions $Y, F \in \cB(\cH)$. 

Our hypotheses on the local behavior of $V$ appear to be the most general ones 
to date. 
\end{abstract}

\maketitle


\section{Introduction} \lb{s1}

The principal purpose of this paper is to derive a streamlined version of 
Weyl--Titchmarsh theory for Schr\"odinger operators on a finite or infinite 
interval $(a,b) \subset \bbR$ with operator-valued potentials $V \in \cB(\cH)$ 
($\cH$ a complex, separable Hilbert space and $\cB(\cH)$ the Banach space of 
bounded linear operators defined on $\cH$) under very general conditions on 
the local behavior of $V$. We will work under the (simplifying) hypothesis that the 
underlying operator-valued differential expression 
\begin{equation}
\tau = - d^2/dx^2 + V(x), \quad x \in (a,b),    \lb{1.1}
\end{equation}
is regular at the left endpoint $a$ and in the limit point case at the right endpoint 
$b$. (For simplicity, the reader may think of the standard half-line case 
$(a,b)=(0,\infty)$.) 

In performing this task, it is necessary to first study existence and uniqueness 
questions of the following initial value problems associated with $\tau$ in great detail. 
More precisely, in Section \ref{s2} we investigate the following two types of initial value problems: First, we consider existence and uniqueness of $\cH$-valued
solutions $y(z,\cdot,x_0)\in W^{2,1}_\loc((a,b);dx;\cH)$ of the initial value problem
\begin{equation}
\begin{cases}
- y'' + (V - z) y = f \, \text{ on } \, (a,b)\bs E,  \\
\, y(x_0) = h_0, \; y'(x_0) = h_1,
\end{cases}     \lb{1.2}
\end{equation}
where the exceptional set $E$ is of Lebesgue measure zero and independent
of $z$. Here we suppose that $(a,b)\subseteq\bbR$ is a finite or infinite interval, 
$x_0\in(a,b)$, $z\in\bbC$, 
$V:(a,b)\to\cB(\cH)$ is a weakly measurable operator-valued function with
$\|V(\cdot)\|_{\cB(\cH)}\in L^1_\loc((a,b);dx)$, and that $h_0,h_1\in\cH$, and 
$f\in L^1_{\loc}((a,b);dx;\cH)$. 

\smallskip 

\noindent 
In particular, we prove for fixed $x_0,x\in(a,b)$ and $z\in\bbC$, that \\ 
$\bullet$ $y(z,x,x_0)$ 
depends jointly continuously on $h_0,h_1\in\cH$, and $f\in L^1_{\loc}((a,b);dx;\cH)$, \\
$\bullet$  $y(z,x,x_0)$ is strongly continuously differentiable with respect to $x$ 
on $(a,b)$, \\
$\bullet$ $y'(z,x,x_0)$ is strongly differentiable with respect to $x$ on $(a,b)\bs E$, 
\\
and that \\
$\bullet$ for fixed $x_0,x \in (a,b)$, $y(z,x,x_0)$ and $y'(z,x,x_0)$ are entire with 
respect to $z$.

\medskip

Second, again assuming $(a,b)\subseteq\bbR$ to be a finite or infinite interval, 
$x_0\in(a,b)$, $z\in\bbC$, $Y_0,\,Y_1\in\cB(\cH)$, and $F,\,V:(a,b)\to\cB(\cH)$ two 
weakly measurable operator-valued functions with
$\|V(\cdot)\|_{\cB(\cH)},\,\|F(\cdot)\|_{\cB(\cH)}\in L^1_\loc((a,b);dx)$, 
we consider existence and uniqueness of $\cB(\cH)$-valued solutions 
$Y(z,\cdot,x_0):(a,b)\to\cB(\cH)$ of the initial value problem
\begin{equation}
\begin{cases}
- Y'' + (V - z)Y = F \, \text{ on } \, (a,b)\bs E,  \\
\, Y(x_0) = Y_0, \; Y'(x_0) = Y_1, 
\end{cases} \lb{1.3}
\end{equation}
where again the exceptional set $E$ is of Lebesgue measure zero and independent
of $z$. 

\smallskip 

\noindent 
For fixed $x_0 \in (a,b)$ and $z \in \bbC$, we prove that \\
$\bullet$ $Y(z,x,x_0)$ is continuously differentiable with respect to $x$ on $(a,b)$ 
in the $\cB(\cH)$-norm,  \\
$\bullet$ $Y'(z,x,x_0)$ is strongly differentiable with respect to $x$ on $(a,b)\bs E$, \\
and that \\
$\bullet$ for fixed $x_0, x \in (a,b)$, $Y(z,x,x_0)$ and $Y'(z,x,x_0)$ are entire in $z$ in
the $\cB(\cH)$-norm.

In addition, Section \ref{s2} introduces the notion of regular endpoints of intervals, several 
notions of Wronskians, the variation of constants formula, and several versions of Green's formula. 

Our principal Section \ref{s3} then develops Weyl--Titchmarsh theory associated with the operator-valued differential expression $\tau$ in \eqref{1.1} under the simplifying (yet 
most important) assumption that the left endpoint $a$ is regular for $\tau$ and that the 
right endpoint $b$ is of the limit point type for $\tau$. We introduce minimal and maximal operators associated with $\tau$, show that they are adjoint to each other, introduce the 
self-adjoint operators $H_{\alpha}$ in the underlying Hilbert space 
$L^2((a,b);dx;\cH)$, parametrized by the bounded self-adjoint 
operator $\alpha= \alpha^* \in \cB(\cH)$ in the self-adjoint boundary condition at the left endpoint $a$ of the type 
$$
\sin(\alpha)u'(a)+\cos(\alpha)u(a)=0, 
$$ 
with $u$ lying in the domain of the maximal operator $H_{\max}$ in 
$L^2((a,b);dx;\cH)$, establish the existence of the Weyl--Titchmarsh solution of 
$H_{\alpha}$, introduce the corresponding Weyl--Titchmarsh $m$-function 
$m_{\alpha}$ and its Herglotz property, and determine the structure of the Green's 
function of $H_{\alpha}$.

Appendix \ref{sA} then establishes basic facts on bounded operator-valued Herglotz functions (i.e., $\cB(\cH)$-valued functions $M$ analytic in the open upper complex 
half-plane $\bbC_+$ with $\Im(M(\cdot) \geq 0$ on $\bbC_+$).

While we restrict our attention to the case $(a,b)$ with $a$ a regular point for 
$\tau$ and $\tau$ in the limit point case at $b$, it is clear  
how to apply the standard $2 \times 2$ block operator formalism (familiar in the 
case of scalar and matrix-valued potentials $V$) to obtain the Weyl--Titchmarsh 
formalism for Schr\"odinger operators with both endpoints $a$ and $b$ in the limit 
point case (and hence Schr\"odinger operators on the whole real line $\bbR$, 
cf.\ Remark \ref{r3.19}). 

Of course, Schr\"odinger operators with bounded and unbounded operator-valued 
potentials $V(\cdot)$ have been studied in the past and we will briefly review the fundamental contributions in this area next. We note, however, that our hypotheses 
on the local behavior of $V(\cdot) \in \cB(\cH)$ appear to be the most general to 
date. 

The case of Schr\"odinger operators with operator-valued potentials under various 
continuity or smoothness hypotheses on $V(\cdot)$ and under various self-adjoint 
boundary conditions on bounded and unbounded open intervals received considerable attention in the past: 
In the special case where $\dim(\cH)<\infty$, that is, in the case of Schr\"odinger 
operators with matrix-valued potentials, the literature is so voluminous that we cannot 
possibly describe individual references and hence we primarily refer to 
\cite{AM63}, \cite{RK05}, and the references cited therein. We also mention that the 
finite-dimensional case, $\dim(\cH) < \infty$, as discussed in \cite{BL00}, is of 
considerable interest as it represents an important ingredient in some proofs of 
Lieb--Thirring inequalities (cf.\ \cite{LW00}). 

In addition, the constant coefficient case, where $\tau$ is of the form 
$\tau = - (d^2/dx^2) + A$, has received overwhelming attention. But since this is not 
the focus of this paper we just refer to \cite{GG73}, \cite[Chs.\ 3, 4]{GG91}, 
\cite{MN11a}, and the literature cited therein. 

In the particular case of Schr\"odinger-type operators corresponding to the differential expression $\tau = - (d^2/dx^2) + A + V(x)$ on a bounded interval 
$(a,b) \subset \bbR$ with either 
$A=0$ or $A$ a self-adjoint operator satisfying $A\geq c I_{\cH}$ for some $c>0$, 
unique solvability of boundary value problems, the asymptotic behavior of 
eigenvalues, and trace formulas 
in connection with various self-adjoint realizations of $\tau = - (d^2/dx^2) + A + V(x)$ 
on a bounded interval $(a,b)$ are discussed, for instance, in \cite{ABK91}, 
\cite{ABK93}
\cite{AK92}, \cite{As08}, 
\cite{Go68}, \cite{GG69}, \cite{GM76}, \cite{Gu08}, \cite{Mo07}, \cite{Mo10} (for the 
case of spectral parameter dependent separated boundary conditions, see also  
\cite{Al06}, \cite{Al10}, \cite{BA10}).   

For earlier results on various aspects of boundary value problems, spectral theory, 
and scattering theory in the half-line case $(a,b) =(0,\infty)$, the situation closely 
related to the principal topic of this paper, we refer, for instance, 
to \cite{Al06a}, \cite{AM10}, \cite{De08}, \cite{Go68}, \cite{Go71}, \cite{GG69}, \cite{GM76}, 
\cite{KL67}, \cite{Mo07}, \cite{Mo10}, \cite{Ro60}, \cite{Sa71}, \cite{Tr00} 
(the case of the real line 
is discussed in \cite{VG70}). While our treatment of initial value problems was inspired 
by the one in \cite{Sa71}, we permit a more general local behavior of $V(\cdot)$. In addition, we also put particular emphasis on Weyl--Titchmarsh theory and the 
structure of the Green's function of $H_{\alpha}$. 

We should also add that this paper represents a first step in our program. Step two 
will be devoted to spectral properties of $H_{\alpha}$, and step three will aim at 
certain classes of unbounded operator-valued potentials $V$, applicable to 
multi-dimensional Schr\"odinger operators in $L^2(\bbR^n; d^n x)$, $n \in \bbN$, 
$n \geq 2$, generated by differential expressions of the type $\Delta + V(\cdot)$. In 
fact, it was precisely the connection between multi-dimensional Schr\"odinger 
operators and one-dimensional Schr\"odinger operators with unbounded operator-valued potentials which originally motivated our interest in this program. This connection was already employed by Kato \cite{Ka59} in 1959; for more recent applications of this connection between one-dimensional Schr\"odinger operators 
with unbounded operator-valued potentials and multi-dimensional Schr\"odinger operators we refer, for instance, to \cite{ACH99}, \cite{Cr01}, 
\cite{Ja70}, \cite{LNS05}, \cite{MN11a}, \cite{Mi76}, \cite{Mi83}, \cite{Mi83a}, 
\cite{Sa08}, \cite{SS07}, \cite{Sa71a}--\cite{Sa79}, and the references cited therein. 
 
Finally, we comment on the notation used in this paper: Throughout, $\cH$ 
denotes a separable, complex Hilbert space with inner product and norm 
denoted by $(\cdot,\cdot)_{\cH}$ (linear in the second argument) and
$\|\cdot \|_{\cH}$, respectively. The identity operator in $\cH$ is written as 
$I_{\cH}$. We denote by $\cB(\cH)$ the Banach space of linear bounded 
operators in $\cH$. The domain, range, kernel (null space)
of a linear operator will be denoted by $\dom(\cdot)$,
$\ran(\cdot)$, $\ker(\cdot)$, 
respectively. The closure of a closable operator $S$ in $\cH$ is denoted by $\ol S$.

\section{The Initial Value Problem of Second-Order Differential Equations with Operator Coefficients} \label{s2}

In this section we provide some basic results about initial value problems for second-order differential equations of the form $-y''+Qy=f$ on an arbitrary open interval
$(a,b) \subseteq \bbR$ with a bounded operator-valued coefficient $Q$, that is, when $Q(x)$ is a bounded operator on a separable, complex Hilbert space $\cH$ for a.e.\ $x\in(a,b)$. In fact, we are interested in two types of situations: In the first one $f(x)$ is an element of the Hilbert space $\cH$ for a.e.\ $x\in (a,b)$, and the solution sought is to take values in $\cH$. In the second situation, $f(x)$ is a bounded operator on $\cH$ for a.e.\ $x\in(a,b)$, as is the proposed solution $y$.

We start with some preliminaries:
Let $(a,b) \subseteq \bbR$ be a finite or infinite interval and $\cX$ a Banach space.
Unless explicitly stated otherwise (such as in the context of operator-valued measures in
Nevanlinna--Herglotz representations, cf., Appendix \ref{sA}), integration of $\cX$-valued functions on $(a,b)$ will
always be understood in the sense of Bochner (cf.\ e.g., \cite[p.\ 6--21]{ABHN01},
\cite[p.\ 44--50]{DU77}, \cite[p.\ 71--86]{HP85}, \cite[Ch.\ III]{Mi78}, \cite[Sect.\ V.5]{Yo80} for 
details). In particular, if $p\ge 1$, the symbol $L^p((a,b);dx;\cX)$ denotes the set of equivalence classes of strongly measurable $\cX$-valued functions which differ at most on sets of Lebesgue measure zero, such that $\|f(\cdot)\|_{\cX}^p \in L^1((a,b);dx)$. The
corresponding norm in $L^p((a,b);dx;\cX)$ is given by
\begin{equation}
\|f\|_{L^p((a,b);dx;\cX)} = \bigg(\int_{(a,b)} dx\, \|f(x)\|_{\cX}^p \bigg)^{1/p}
\end{equation}
and $L^p((a,b);dx;\cX)$ is a Banach space.

If $\cH$ is a separable Hilbert space, then so is $L^2((a,b);dx;\cH)$ (see, e.g., 
\cite[Subsects.\ 4.3.1, 4.3.2]{BW83}, \cite[Sect.\ 7.1]{BS87}). 

One recalls that by a result of Pettis \cite{Pe38}, if $\cX$ is separable, weak 
measurability of $\cX$-valued functions implies their strong measurability.

If $g \in L^1((a,b);dx;\cX)$, $f(x)= \int_{x_0}^x dx' g(x')$, $x_0, x \in (a,b)$, then $f$ is
strongly differentiable a.e.\ on $(a,b)$ and
\begin{equation}
f'(x) = g(x) \, \text{ for a.e.\ $x \in (a,b)$}.
\end{equation}
In addition,
\begin{equation}
\lim_{t\downarrow 0} \f{1}{t} \int_x^{x+t} dx' \|g(x') - g(x)\|_{\cX} = 0 \,
\text{ for a.e.\ $x \in (a,b)$,}
\end{equation}
in particular,
\begin{equation}
\slim_{t\downarrow 0}\f{1}{t} \int_x^{x+t} dx' g(x') = g(x)
\, \text{ for a.e.\ $x \in (a,b)$.}
\end{equation}

Sobolev spaces $W^{n,p}((a,b); dx; \cX)$ for $n\in\bbN$ and $p\geq 1$ are defined as follows: $W^{1,p}((a,b);dx;\cX)$ is the set of all
$f\in L^p((a,b);dx;\cX)$ such that there exists a $g\in L^p((a,b);dx;\cX)$ and an
$x_0\in(a,b)$ such that
\begin{equation}
f(x)=f(x_0)+\int_{x_0}^x dx' \, g(x') \, \text{ for a.e.\ $x \in (a,b)$.}
\end{equation}
In this case $g$ is the strong derivative of $f$, $g=f'$. Similarly,
$W^{n,p}((a,b);dx;\cX)$ is the set of all $f\in L^p((a,b);dx;\cX)$ so that the first $n$ strong
derivatives of $f$ are in $L^p((a,b);dx;\cX)$. For simplicity of notation one also introduces
$W^{0,p}((a,b);dx;\cX)=L^p((a,b);dx;\cX)$. Finally, $W^{n,p}_{\rm loc}((a,b);dx;\cX)$ is
the set of $\cX$-valued functions defined on $(a,b)$ for which the restrictions to any
compact interval $[\alpha,\beta]\subset(a,b)$ are in $W^{n,p}((\alpha,\beta);dx;\cX)$.
In particular, this applies to the case $n=0$ and thus defines $L^p_{\rm loc}((a,b);dx;\cX)$.
If $a$ is finite we may allow $[\alpha,\beta]$ to be a subset of $[a,b)$ and denote the
resulting space by $W^{n,p}_{\rm loc}([a,b);dx;\cX)$ (and again this applies to the case
$n=0$).

Following a frequent practice (cf., e.g., the discussion in \cite[Sect.\ III.1.2]{Am95}), we
will call elements of $W^{1,1} ([c,d];dx;\cX)$, $[c,d] \subset (a,b)$ (resp.,
$W^{1,1}_{\rm loc}((a,b);dx;\cX)$), strongly absolutely continuous $\cX$-valued functions
on $[c,d]$ (resp., strongly locally absolutely continuous $\cX$-valued functions
on $(a,b)$), but caution the reader that unless $\cX$ posseses the Radon--Nikodym 
(RN) property, this notion differs from the classical definition 
of $\cX$-valued absolutely continuous functions (we refer the interested reader 
to \cite[Sect.\ VII.6]{DU77} for an extensive list of conditions equivalent to $X$ having the 
RN property). Here we just mention that reflexivity of $X$ implies the RN property. 

In the special case where $\cX = \bbC$, we omit $\cX$ and just write
$L^p_{(\loc)}((a,b);dx)$, as usual.

{\bf A Remark on notational convention:} To avoid possible confusion later on between 
two standard
notions of strongly continuous operator-valued functions $F(x)$,
$x \in (a,b)$, that is, strong continuity of $F(\cdot) h$ in $\cH$ for all $h \in\cH$ (i.e.,
pointwise continuity of $F(\cdot)$), versus strong continuity of $F(\cdot)$ in the norm
of $\cB(\cH)$ (i.e., uniform continuity of $F(\cdot)$), we will always mean pointwise continuity of $F(\cdot)$ in $\cH$. The same pointwise conventions will apply to the notions of strongly differentiable and strongly measurable operator-valued functions throughout this manuscript.
In particular, and unless explicitly stated otherwise, for operator-valued functions $Y$, the symbol $Y'$ will be understood in the strong sense; similarly,  $y'$ will denote the strong derivative for vector-valued functions $y$.

The following elementary lemma is probably well-known, but since we repeatedly use
it below, and we could not quickly locate it in the literature, we include a detailed proof:

\begin{lemma} \label{l2.1}
Let $(a,b)\subseteq\bbR$. Suppose $Q:(a,b)\to\cB(\cH)$ is a weakly
measurable operator-valued function with $\|Q(\cdot)\|_{\cB(\cH)}\in L^1_\loc((a,b);dx)$ and $g:(a,b)\to\cH$ is $($weakly$)$ measurable. Then $Qg$ is $($strongly$)$ measurable. Moreover, if $g$ is strongly continuous, then there exists a set $E\subset(a,b)$ with zero Lebesgue measure, depending only on $Q$, such that for every $x_0\in(a,b)\bs E$,
\begin{equation}
\lim_{t\downarrow0}\f{1}{t}\int_{x_0}^{x_0+t} dx \, \|Q(x)g(x) - Q(x_0)g(x_0)\|_\cH = 0,
\lb{2.6A}
\end{equation}
in particular,
\begin{equation}
\slim_{t\downarrow0} \f{1}{t}\int_{x_0}^{x_0+t} dx \, Q(x)g(x) = Q(x_0)g(x_0),
\lb{2.7A}
\end{equation}
that is, the set of Lebesgue points of $Q(\cdot)g(\cdot)$ can be
chosen independently of $g$.
\end{lemma}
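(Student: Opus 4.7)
The proof splits naturally into three pieces: (i) the measurability statement, (ii) the construction of a $g$-independent exceptional set $E$, and (iii) the Lebesgue-differentiation estimate at points outside $E$, from which both \eqref{2.6A} and \eqref{2.7A} drop out.

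For (i), I would argue as follows. Since $\cH$ is separable, Pettis's theorem upgrades weak to strong measurability of $\cH$-valued functions, so $g$ is strongly measurable. Approximate $g$ pointwise in norm by a sequence of simple functions $g_n = \sum_{k} \chi_{A_k^{(n)}} h_k^{(n)}$ with $h_k^{(n)}\in\cH$. For each fixed $h\in\cH$, $Q(\cdot)h$ is a weakly measurable $\cH$-valued function, hence strongly measurable by Pettis; consequently each $Qg_n$ is strongly measurable, and $Qg=\slim_{n\to\infty}Qg_n$ is as well. Local Bochner integrability of $Qg$ on compacta (needed for the Lebesgue differentiation step) follows from the bound $\|Q(x)g(x)\|_\cH\le\|Q(x)\|_{\cB(\cH)}\,\|g(x)\|_\cH$ together with the local boundedness of the continuous function $g$ in the hypothesis of \eqref{2.6A}.

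For (ii) — this is where the key idea lives — fix a countable dense subset $\{h_n\}_{n\in\bbN}\subset\cH$. By (i) each $x\mapsto Q(x)h_n$ lies in $L^1_\loc((a,b);dx;\cH)$, and the Lebesgue differentiation theorem for Bochner integrals (recorded at the start of Section \ref{s2}) yields a null set $E_n\subset(a,b)$ such that
\begin{equation}
\lim_{t\downarrow 0}\f{1}{t}\int_{x_0}^{x_0+t}dx\,\|Q(x)h_n-Q(x_0)h_n\|_\cH=0,\quad x_0\in(a,b)\bs E_n.
\end{equation}
Similarly let $E_0$ denote the (null) complement of the Lebesgue set of the scalar function $\|Q(\cdot)\|_{\cB(\cH)}\in L^1_\loc((a,b);dx)$, and put $E=\bigcup_{n\ge 0}E_n$. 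Then $E$ is of Lebesgue measure zero and depends only on $Q$.

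For (iii), fix $x_0\in(a,b)\bs E$ and split
\begin{equation}
Q(x)g(x)-Q(x_0)g(x_0)=Q(x)\bigl[g(x)-g(x_0)\bigr]+\bigl[Q(x)-Q(x_0)\bigr]g(x_0).
\end{equation}
For the first summand, the continuity of $g$ gives $\sup_{x\in[x_0,x_0+t]}\|g(x)-g(x_0)\|_\cH\to 0$ as $t\downarrow 0$, while $t^{-1}\int_{x_0}^{x_0+t}\|Q(x)\|_{\cB(\cH)}\,dx\to\|Q(x_0)\|_{\cB(\cH)}$ since $x_0\notin E_0$; their product averages to $0$. For the second summand, given $\ve>0$ pick $h_n$ with $\|g(x_0)-h_n\|_\cH<\ve$ and insert $\pm Q(x)h_n\pm Q(x_0)h_n$ to estimate
\begin{equation}
\|(Q(x)-Q(x_0))g(x_0)\|_\cH\le\ve\bigl(\|Q(x)\|_{\cB(\cH)}+\|Q(x_0)\|_{\cB(\cH)}\bigr)+\|Q(x)h_n-Q(x_0)h_n\|_\cH.
\end{equation}
Averaging over $[x_0,x_0+t]$ and using the definition of $E_0$ and $E_n$, the limsup as $t\downarrow 0$ is bounded by $2\ve\|Q(x_0)\|_{\cB(\cH)}$; since $\ve$ was arbitrary, \eqref{2.6A} follows. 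Finally, \eqref{2.7A} is obtained by pulling the norm inside the Bochner integral and applying \eqref{2.6A}.

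The main obstacle is securing an exceptional set $E$ that is \emph{independent} of $g$; this is exactly what the countable-dense reduction in (ii) achieves, reducing the problem at any given $x_0$ to countably many Lebesgue-point conditions on $Q$ alone.
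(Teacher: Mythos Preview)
Your proof is correct and follows essentially the same strategy as the paper. The only cosmetic differences are that the paper establishes strong measurability of $Qg$ via the adjoint identity $(f,Q(\cdot)h(\cdot))_\cH=(Q(\cdot)^*f,h(\cdot))_\cH$ expanded over an orthonormal basis (rather than via simple-function approximation of $g$), and it builds the exceptional set $E$ from the Lebesgue sets of $Q(\cdot)e_n$ for an orthonormal basis $\{e_n\}$ (rather than a countable dense set $\{h_n\}$), handling the second summand in your splitting by truncating the Fourier expansion of $g(x_0)$ instead of your $\varepsilon$-density argument; these are interchangeable devices yielding the same three-term estimate.
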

\begin{proof}
Since by hypothesis, $Q(\cdot)$ on $(a,b)$ is weakly measurable in $\cH$, that is,
\begin{equation}
(f,Q(\cdot) g)_{\cH} \, \text{ is (Lebesgue) measurable for all $f, g \in \cH$,}
\end{equation}
one infers that this is equivalent to $Q(\cdot)^*$ on $(a,b)$ being weakly measurable
in $\cH$. An application of Pettis' theorem \cite{Pe38} then yields that $Q(\cdot) f$ (equivalently, $Q(\cdot)^* f$) on $(a,b)$ is strongly measurable for all $f \in \cH$.

Next, let $\{e_n\}_{n \in\bbN}$ be a complete orthonormal system in $\cH$. Then writing
\begin{equation}
\|Q(\cdot) f\|^2_{\cH} = \sum_{n\in\bbN} (Q(\cdot) f, e_n)_{\cH} (e_n,Q(\cdot) f)_{\cH},
\end{equation}
one concludes that $\|Q(\cdot)f\|_{\cH}$ on $(a,b)$ is measurable for all $f\in\cH$. In
addition, let $h(\cdot)$ on $(a,b)$ be a weakly (and hence, strongly) measurable function
in $\cH$. Then
\begin{equation}
(f, Q(\cdot) h(\cdot))_{\cH} = (Q(\cdot)^* f, h(\cdot))_{\cH} =
\sum_{n\in\bbN} (Q(\cdot)^* f, e_n)_{\cH} (e_n, h(\cdot))_{\cH},
\end{equation}
implies that $Q(\cdot) h(\cdot)$ on $(a,b)$ is weakly measurable in $\cH$. Another
application of Pettis' theorem then yields the strong measurability of
$Q(\cdot) h(\cdot)$ on $(a,b)$ in $\cH$.

Let $E_0\subset(a,b)$ be a set of Lebesgue measure zero such that
every $x_0\in(a,b)\bs E_0$ is a Lebesgue point for the function
$\|Q(\cdot)\|_{\cB(\cH)}$, implying,
\begin{align}
\lim_{t\downarrow0}\f{1}{t}\int_{x_0}^{x_0+t} dx \,
\|Q(x)\|_{\cB(\cH)} = \|Q(x_0)\|_{\cB(\cH)}, \quad
x_0\in(a,b)\bs E_0. \lb{2.11A}
\end{align}

Next, let $\{E_n\}_{n\in\bbN}$ be a sequence of subsets of $(a,b)$ such that
each $E_n$ is of Lebesgue measure zero and every $x_0\in(a,b)\bs E_n$
is a Lebesgue point for the vector-valued function $Q(\cdot) e_n$, that
is,
\begin{align}
\lim_{t\downarrow0}\f{1}{t}\int_{x_0}^{x_0+t} dx \, \|Q(x) e_n -
Q(x_0) e_n \|_{\cH} = 0, \quad x_0\in(a,b)\bs E_n. \lb{2.12A}
\end{align}

In addition, let $E=\bigcup_{n=0}^\infty E_n$, then every $x_0\in(a,b)\bs E$
is a Lebesgue point for $Q(\cdot)g(\cdot)$. Indeed, decomposing $g(x_0)$
with respect to the orthonormal basis $\{e_n\}_{n\in\bbN}$,
\begin{align}
g(x_0)=\sum_{n\in\bbN}g_n(x_0) e_n, \quad
g_n(x_0)=\big(e_n,g(x_0)\big)_\cH, \; n\in\bbN, \lb{2.13A}
\end{align}
and recalling that by Pettis' theorem, $Q g$ is strongly measurable, yields (for $t>0$)
\begin{align}
&\bigg\|\f{1}{t}\int_{x_0}^{x_0+t} dx \, [Q(x)g(x) - Q(x_0)g(x_0)]\bigg\|_\cH
\no \\
& \quad \le \f{1}{t}\int_{x_0}^{x_0+t} dx \, \|Q(x)g(x) - Q(x_0)g(x_0)\|_\cH
\no \\
&\quad \le \f{1}{t}\int_{x_0}^{x_0+t} dx \,
\|Q(x)[g(x)-g(x_0)] \|_\cH +
\f{1}{t}\int_{x_0}^{x_0+t} dx \,
\|[Q(x)-Q(x_0)] g(x_0)\|_\cH
\no \\
&\quad \le
\f{1}{t}\int_{x_0}^{x_0+t} dx \, \|Q(x)\|_{\cB(\cH)} \,
\sup_{x\in[x_0,x_0+t]}\|g(x)-g(x_0)\|_\cH   \no \\
&\qquad + \sum_{n=1}^{N} |g_n(x_0)|
\bigg(\f{1}{t}\int_{x_0}^{x_0+t} dx \, \|[Q(x)-Q(x_0)] e_n\|_\cH\bigg)
\no \\
&\qquad
+ \bigg(\f{1}{t}\int_{x_0}^{x_0+t} dx \,
[\|Q(x)\|_{\cB(\cH)} + \|Q(x_0)\|_{\cB(\cH)}]\bigg) \,
\bigg\|\sum_{n=N+1}^\infty g_n(x_0) e_n\bigg\|_\cH.    \lb{2.14A}
\end{align}
Finally, taking the limit $t\downarrow 0$ renders the
first term on the right-hand side of \eqref{2.14A} zero as
$g(\cdot)$ is strongly continuous in $\cH$ and $x_0$ is a Lebesgue point
of $\|Q(\cdot)\|_{\cB(\cH)}$ by \eqref{2.11A}. Similarly, taking $t\downarrow 0$ renders
the second term on the right-hand side of \eqref{2.14A} zero by \eqref{2.12A}.
Again by \eqref{2.11A}, the third term on the
right-hand side of \eqref{2.14A} approaches
$2\|Q(x_0)\|_{\cB(\cH)}\big\|\sum_{n=N+1}^\infty g_n(x_0) e_n\big\|_\cH$ as
$t\downarrow 0$ and hence vanishes in the limit $N\to\infty$ (cf.\ \eqref{2.13A}).
\end{proof}

In connection with \eqref{2.7A} we also refer to \cite[Theorem\ II.2.9]{DU77}, 
\cite[Subsect.\ III.3.8]{HP85}, \cite[Theorem\ V.5.2]{Yo80}.

\begin{definition} \lb{d2.2}
Let $(a,b)\subseteq\bbR$ be a finite or infinite interval and
$Q:(a,b)\to\cB(\cH)$ a weakly measurable operator-valued function with
$\|Q(\cdot)\|_{\cB(\cH)}\in L^1_\loc((a,b);dx)$, and suppose that
$f\in L^1_{\loc}((a,b);dx;\cH)$. Then the $\cH$-valued function
$y: (a,b)\to \cH$ is called a (strong) solution of
\begin{equation}
- y'' + Q y = f   \lb{2.15A}
\end{equation}
if $y \in W^{2,1}_\loc((a,b);dx;\cH)$ and \eqref{2.15A} holds a.e.\ on $(a,b)$.
\end{definition}

We recall our notational convention that vector-valued solutions of \eqref{2.15A} will always be viewed as strong solutions.

One verifies that $Q:(a,b)\to\cB(\cH)$ satisfies the conditions in
Definition \ref{d2.2} if and only if $Q^*$ does (a fact that will play a role later on, cf.\
the paragraph following \eqref{2.33A}).

\begin{theorem} \lb{t2.3}
Let $(a,b)\subseteq\bbR$ be a finite or infinite interval and
$V:(a,b)\to\cB(\cH)$ a weakly measurable operator-valued function with
$\|V(\cdot)\|_{\cB(\cH)}\in L^1_\loc((a,b);dx)$. Suppose that
$x_0\in(a,b)$, $z\in\bbC$, $h_0,h_1\in\cH$, and $f\in
L^1_{\loc}((a,b);dx;\cH)$. Then there is a unique $\cH$-valued
solution $y(z,\cdot,x_0)\in W^{2,1}_\loc((a,b);dx;\cH)$ of the initial value problem
\begin{equation}
\begin{cases}
- y'' + (V - z) y = f \, \text{ on } \, (a,b)\bs E,  \\
\, y(x_0) = h_0, \; y'(x_0) = h_1,
\end{cases}     \lb{2.1}
\end{equation}
where the exceptional set $E$ is of Lebesgue measure zero and independent
of $z$.

Moreover, the following properties hold:
\begin{enumerate}[$(i)$]
\item For fixed $x_0,x\in(a,b)$ and $z\in\bbC$, $y(z,x,x_0)$ depends jointly continuously on $h_0,h_1\in\cH$, and $f\in L^1_{\loc}((a,b);dx;\cH)$ in the sense that
\begin{align}
\begin{split}
& \big\|y\big(z,x,x_0;h_0,h_1,f\big) - y\big(z,x,x_0;\wti h_0,\wti h_1,\wti f\big)\big\|_{\cH}    \\
& \quad \leq C(z,V)
\big[\big\|h_0 - \wti h_0\big\|_{\cH} + \big\|h_1 - \wti h_1\big\|_{\cH}
+ \big\|f - \wti f\big\|_{L^1([x_0,x];dx;\cH)}\big],    \lb{2.1A}
\end{split}
\end{align}
where $C(z,V)>0$ is a constant, and the dependence of
$y$ on the initial data $h_0, h_1$ and the inhomogeneity $f$ is displayed 
in \eqref{2.1A}. 
\item For fixed $x_0\in(a,b)$ and $z\in\bbC$, $y(z,x,x_0)$ is strongly continuously differentiable with respect to $x$ on $(a,b)$.
\item For fixed $x_0\in(a,b)$ and $z\in\bbC$, $y'(z,x,x_0)$ is strongly differentiable with respect to $x$ on $(a,b)\bs E$.
\item For fixed $x_0,x \in (a,b)$, $y(z,x,x_0)$ and $y'(z,x,x_0)$
are entire with respect to $z$.
\end{enumerate}
\end{theorem}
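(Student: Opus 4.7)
The plan is to convert the second-order problem into a first-order system on $\cH \oplus \cH$ and solve it by Picard iteration. Setting $\vec{y} = (y,y')^\top$, the equation $- y'' + (V - z) y = f$ becomes $\vec{y}' = A(\cdot) \vec{y} + \vec{F}$ with
\begin{equation}
A(x) = \begin{pmatrix} 0 & I_\cH \\ V(x) - z & 0 \end{pmatrix}, \quad
\vec{F}(x) = \begin{pmatrix} 0 \\ -f(x) \end{pmatrix}, \quad
\vec{y}(x_0) = \binom{h_0}{h_1}.
\end{equation}
Since $\|A(\cdot)\|_{\cB(\cH \oplus \cH)} \le 1 + \|V(\cdot) - z\|_{\cB(\cH)} \in L^1_\loc((a,b);dx)$, Lemma \ref{l2.1} ensures that $A(\cdot)\vec{y}(\cdot)$ is strongly measurable and Bochner integrable on compacta whenever $\vec{y}$ is strongly continuous, so the IVP is equivalent to the Volterra integral equation
\begin{equation}
\vec{y}(x) = \binom{h_0}{h_1} + \int_{x_0}^x dx' \, [A(x') \vec{y}(x') + \vec{F}(x')].
\end{equation}

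I would then solve this by Picard iteration, setting $\vec{y}_0(x) = (h_0,h_1)^\top + \int_{x_0}^x dx' \, \vec{F}(x')$ and $\vec{y}_{n+1}(x) = \vec{y}_0(x) + \int_{x_0}^x dx' \, A(x')\vec{y}_n(x')$. Each iterate is strongly continuous by Lemma \ref{l2.1}, and on any compact $[\alpha,\beta] \subset (a,b)$ containing $x_0$ the inductive Volterra bound
\begin{equation}
\sup_{x \in [\alpha,\beta]} \|\vec{y}_{n+1}(x) - \vec{y}_n(x)\|_{\cH \oplus \cH}
\le \frac{\Phi^n}{n!} \sup_{x \in [\alpha,\beta]} \|\vec{y}_1(x) - \vec{y}_0(x)\|_{\cH \oplus \cH},
\end{equation}
with $\Phi = \int_\alpha^\beta dx' \, \|A(x')\|_{\cB(\cH \oplus \cH)}$, forces uniform convergence of $\{\vec{y}_n\}$ to a strongly continuous solution of the integral equation. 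Exhausting $(a,b)$ by such compact subintervals yields a global solution on all of $(a,b)$, and uniqueness follows from a Gronwall estimate applied to the difference of two solutions of the integral equation. The regularity $y \in W^{2,1}_\loc((a,b);dx;\cH)$ is then read off directly from the integral equation.

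The four listed properties are extracted from the integral formulation. For (i), the same Gronwall argument applied to two different data sets produces \eqref{2.1A} with $C(z,V)$ controlled by $\exp(\Phi)$ on $[\alpha,\beta]$. Property (ii) is immediate: the second component of $\vec{y}$ is the strong derivative $y'$ and is itself strongly continuous on $(a,b)$. For (iii), writing
\begin{equation}
y'(x) = h_1 + \int_{x_0}^x dx' \, [(V(x') - z)y(x') - f(x')],
\end{equation}
the second half of Lemma \ref{l2.1} applied to $V(\cdot)y(\cdot)$ (with $y$ strongly continuous), combined with the classical Lebesgue differentiation theorem for $f$, gives strong differentiability of $y'$ on $(a,b) \setminus E$ with $E$ depending only on $V$ and $f$. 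For (iv), each iterate $\vec{y}_n(x)$ is a polynomial in $z$ of degree at most $n$, and on any compact $K \subset \bbC$ the Volterra estimate above is uniform in $z \in K$ because $\|A(x)\|_{\cB(\cH \oplus \cH)} \le 1 + \|V(x)\|_{\cB(\cH)} + \max_{z \in K} |z|$; Weierstrass's theorem then transfers entirety to the limits $y(z,x,x_0)$ and $y'(z,x,x_0)$.

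The one genuinely delicate point is arranging for the exceptional set $E$ to be independent of $z$ and of the initial data $h_0, h_1$. This is precisely what the second half of Lemma \ref{l2.1} delivers: the set of Lebesgue points of $V(\cdot) y(\cdot)$ can be chosen to depend only on $V$, not on the particular strongly continuous function $y$. Once $E$ is fixed by $V$ and $f$ alone, the same $E$ serves simultaneously for every $z \in \bbC$ and every choice of $(h_0, h_1) \in \cH \oplus \cH$, which is exactly what the theorem claims.
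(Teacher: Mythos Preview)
Your proof is correct and complete; all four properties are properly justified, and your use of Lemma~\ref{l2.1} to pin down a $z$-independent exceptional set $E$ is exactly the right move.

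Your approach differs from the paper's in one organizational choice. The paper does not pass to the first-order system; instead it writes the second-order equation directly as a Volterra equation built from the free propagator $\cos\big(z^{1/2}(x-x_0)\big)$ and $z^{-1/2}\sin\big(z^{1/2}(x-x_0)\big)$, treating $V(\cdot)y$ as the inhomogeneity (equation~\eqref{2.1a}), and then runs Picard iteration on that. In effect the paper splits off $-y''-zy$ as the ``free'' part and perturbs by $V$, whereas you split off only $y'$ and absorb $V-z$ into the system matrix $A$. Both routes give the same factorial Volterra bound and the same conclusions. The paper's formulation has the minor advantage that the free kernel is already entire in $z$, which aligns naturally with the Weyl--Titchmarsh constructions later in Section~\ref{s3}; your formulation is the cleaner textbook reduction and makes the polynomial-in-$z$ structure of the iterates explicit, so the entirety argument via Weierstrass is equally transparent. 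Neither approach has a real advantage for the theorem as stated.
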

\begin{proof}
As discussed in the proof of Lemma \ref{l2.1}, if $f:(a,b)\to\cH$ is strongly
measurable, then $Q(\cdot)f(\cdot)$ is also a strongly measurable $\cH$-valued function.

As in the classical scalar case (i.e., $\cH=\bbC$), one can show that a function
$y(z,\cdot,x_0) \in W^{2,1}_\loc((a,b);dx;\cH)$ satisfies the initial-value problem
\eqref{2.1} if and only if $y(z,\cdot,x_0)$ is strongly measurable, strongly locally
bounded, and satisfies the integral equation,
\begin{align}
y(z,x,x_0) &= \cos\big(z^{1/2}(x-x_0)\big)h_0
+ z^{-1/2} \sin\big(z^{1/2}(x-x_0)\big) h_1   \no \\
&\quad + \int_{x_0}^x dx' \, z^{-1/2} \sin\big(z^{1/2}(x-x')\big)
\big[V(x')y(z,x',x_0)-f(x')\big],  \lb{2.1a} \\
& \hspace*{4.2cm} z \in \bbC, \; \Im(z^{1/2}) \geq 0, \; x_0, x \in (a,b).   \no
\end{align}
Thus, it suffices to verify existence and uniqueness for a solution of
\eqref{2.1a}. For uniqueness it is enough to check that $y(z,\cdot,x_0)=0$
is the only solution of
\begin{align}
y(z,x,x_0) &= \int_{x_0}^x dx' \,
z^{-1/2} \sin\big(z^{1/2}(x-x')\big)V(x')y(x'). \lb{2.1b}
\end{align}
Let $K\subset(a,b)$ be a compact subset containing $x_0$, then
iterations of \eqref{2.1b} yield
\begin{align}
\sup_{x\in K}\|y(z,x,x_0)\|_\cH \le
\f{1}{n!}\left(C(z)\int_{x_0}^x\|V(x')\|_{\cB(\cH)}dx'\right)^n
\sup_{x'\in K}\|y(z,x',x_0)\|_\cH, \; n\in\bbN,
\end{align}
for an appropriate constant $C(z)>0$.
Since $K$ and $n$ are arbitrary, the only solution of \eqref{2.1b} is
the zero solution.

To show existence one uses the method of successive approximations.
Define a sequence of vector-valued functions
$y_n(z,\cdot,x_0):(a,b)\to\cH$, $n\in\bbN_0$, by
\begin{align}
y_0(z,x,x_0)&=\cos\big(z^{1/2}(x-x_0)\big)h_0
+ z^{-1/2} \sin\big(z^{1/2}(x-x_0)\big) h_1 \no \\
& \quad - \int_{x_0}^x dx' \, z^{-1/2} \sin\big(z^{1/2}(x-x')\big) f(x'), \no \\
y_n(z,x,x_0)&=\int_{x_0}^x dx' \,
z^{-1/2} \sin\big(z^{1/2}(x-x')\big) V(x')y_{n-1}(z,x',x_0), \quad
n\in\bbN. \lb{2.2}
\end{align}
Then for each $n\in\bbN_0$, it follows inductively that for fixed $x_0\in(a,b)$ and $z\in\bbC$, $y_n(z,x,x_0)$ is strongly locally absolutely continuous with respect to $x$ on $(a,b)$, and for fixed $x_0, x \in (a,b)$, $y_n(z,x,x_0)$, $y_n'(z,x,x_0)$ are entire with respect to $z$. The estimate
\begin{align}
\begin{split}
&\|y_n(z,x,x_0)\|_\cH + \|y_n'(z,x,x_0)\|_\cH
\\
&\quad \le \f{1}{n!}\bigg(C\int_{x_0}^x dx' \, \|V(x')\|_{\cB(\cH)}dx'\bigg)^n
\bigg(\|h_0\|_\cH + \|h_1\|_\cH + \int_{x_0}^x\|f(x')\|_{\cH}\bigg),
\end{split} \lb{2.2a}
\end{align}
holds uniformly in $(z,x)$ on compact subsets of $\bbC\times(a,b)$,
where $C$ depends only on the compact subset of $\bbC\times(a,b)$.
This yields convergence of the series,
\begin{align}
y(z,x,x_0) = \sum_{n=0}^\infty y_n(z,x,x_0), \quad y'(z,x,x_0) =
\sum_{n=0}^\infty y_n'(z,x,x_0)   \lb{2.2b}
\end{align}
with
\begin{align}
\|y(z,x,x_0)\|_{\cH} &\le \exp\bigg(C\int_{x_0}^x dx' \, \|V(x')\|_{\cB(\cH)}\bigg) \no
\\
&\quad \times\bigg(\|h_0\|_{\cB(\cH)} + \|h_1\|_{\cH} +
\int_{x_0}^x dx' \, \|f(x')\|_{\cH} \bigg), \lb{2.2c}
\end{align}
uniformly in $(z,x)$ on compact subsets of $\bbC\times(a,b)$. Then \eqref{2.2},
\eqref{2.2b} imply that $y(z,\cdot,x_0)$ is a solution of the integral equation
\eqref{2.1a}, and \eqref{2.2b}, \eqref{2.2c} yield the properties $(i)$ (taking into account
linearity of \eqref{2.1}) and $(iv)$.

Finally, by \eqref{2.1a}, for each $z\in\bbC$ and a.e.\ $x\in(a,b)$,
\begin{align}
y''(z,x,x_0)=zy(z,x,x_0)-V(x)y(z,x,x_0)+f(x),
\end{align}
and hence
\begin{align}
\begin{split}
y(z,x,x_0) &= \cos\big(z^{1/2}(x-x_0)\big)h_0
+ z^{-1/2} \sin\big(z^{1/2}(x-x_0)\big) h_1
\\
&\quad + \int_{x_0}^x dx' \bigg(\int_{x_0}^{x'} dx'' \,
\big[zy(z,x'',x_0)-V(x'')y(z,x'',x_0)+f(x'')\big]\bigg).
\end{split}
\end{align}
This representation of $y(z,x,x_0)$ combined with Lemma~\ref{l2.1} yields the properties $(ii)$ and $(iii)$. In particular, $y(z,\cdot,x_0)\in W^{2,1}_\loc((a,b);dx;\cH)$ and $y(z,\cdot,x_0)$ is a strong solution of the initial value problem \eqref{2.1}.
\end{proof}

For classical references on initial value problems we refer, for instance, to 
\cite[Chs.\ III, VII]{DK74} and \cite[Ch.\ 10]{Di60}, but we emphasize again that our approach minimizes the smoothness hypotheses on $V$ and $f$.
  
\begin{definition} \lb{d2.4}
Let $(a,b)\subseteq\bbR$ be a finite or infinite interval and assume that
$F,\,Q:(a,b)\to\cB(\cH)$ are two weakly measurable operator-valued functions such
that $\|F(\cdot)\|_{\cB(\cH)},\,\|Q(\cdot)\|_{\cB(\cH)}\in L^1_\loc((a,b);dx)$. Then the
$\cB(\cH)$-valued function $Y:(a,b)\to\cB(\cH)$ is called a solution of
\begin{equation}
- Y'' + Q Y = F   \lb{2.26A}
\end{equation}
if $Y(\cdot)h\in W^{2,1}_\loc((a,b);dx;\cH)$ for every $h\in\cH$ and $-Y''h+QYh=Fh$ holds
a.e.\ on $(a,b)$.
\end{definition}

\begin{corollary} \lb{c2.5}
Let $(a,b)\subseteq\bbR$ be a finite or infinite interval, $x_0\in(a,b)$, $z\in\bbC$, $Y_0,\,Y_1\in\cB(\cH)$, and suppose $F,\,V:(a,b)\to\cB(\cH)$ are two weakly measurable operator-valued functions with
$\|V(\cdot)\|_{\cB(\cH)},\,\|F(\cdot)\|_{\cB(\cH)}\in L^1_\loc((a,b);dx)$. Then there is a
unique $\cB(\cH)$-valued solution $Y(z,\cdot,x_0):(a,b)\to\cB(\cH)$ of the initial value
problem
\begin{equation}
\begin{cases}
- Y'' + (V - z)Y = F \, \text{ on } \, (a,b)\bs E,  \\
\, Y(x_0) = Y_0, \; Y'(x_0) = Y_1.
\end{cases} \lb{2.3}
\end{equation}
where the exceptional set $E$ is of Lebesgue measure zero and independent
of $z$. Moreover, the following properties hold:
\begin{enumerate}[$(i)$]
\item For fixed $x_0 \in (a,b)$ and $z \in \bbC$, $Y(z,x,x_0)$ is continuously
differentiable with respect to $x$ on $(a,b)$ in the $\cB(\cH)$-norm.
\item For fixed $x_0 \in (a,b)$ and $z \in \bbC$, $Y'(z,x,x_0)$ is strongly differentiable with respect to $x$ on $(a,b)\bs E$.
\item For fixed $x_0, x \in (a,b)$, $Y(z,x,x_0)$ and $Y'(z,x,x_0)$ are entire in $z$ in
the $\cB(\cH)$-norm.
\end{enumerate}
\end{corollary}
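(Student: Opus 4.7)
The strategy is to reduce everything to Theorem \ref{t2.3} by constructing the operator-valued solution pointwise. For each $h\in\cH$, let $y(z,\cdot,x_0;h)$ denote the unique $\cH$-valued solution produced by Theorem \ref{t2.3} of the vector initial value problem $-y''+(V-z)y=Fh$ on $(a,b)\bs E$, $y(x_0)=Y_0h$, $y'(x_0)=Y_1h$, and set $Y(z,x,x_0)h:=y(z,x,x_0;h)$ and $Y'(z,x,x_0)h:=y'(z,x,x_0;h)$. A minor but important preliminary is to check that the exceptional set $E$ from Theorem \ref{t2.3} can be chosen independent of $h$: this follows by applying Lemma \ref{l2.1} once to $V$ and once to $F$ with constant $g\equiv h$, so that $E$ depends only on $V$ and $F$. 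Linearity of $h\mapsto Y(z,x,x_0)h$ is immediate from the uniqueness clause of Theorem \ref{t2.3}, and boundedness follows from the estimates \eqref{2.2a}--\eqref{2.2c} applied to the data $(Y_0h,Y_1h,Fh)$, together with the trivial bounds $\|Y_0h\|_\cH\le\|Y_0\|_{\cB(\cH)}\|h\|_\cH$ and $\|F(\cdot)h\|_\cH\le\|F(\cdot)\|_{\cB(\cH)}\|h\|_\cH$; this places $Y(z,x,x_0)$, $Y'(z,x,x_0)$ in $\cB(\cH)$ with local-in-$x$, uniform-in-$z$-on-compacta norm bounds. Uniqueness of the $\cB(\cH)$-valued solution of \eqref{2.3} then drops out, since any competitor $\wti Y$ satisfies $\wti Y(\cdot)h=y(z,\cdot,x_0;h)$ for every $h$ by Theorem \ref{t2.3}.

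I would then establish the three properties in order. Property (iii) is the easiest: by Theorem \ref{t2.3}(iv), $z\mapsto Y(z,x,x_0)h$ and $z\mapsto Y'(z,x,x_0)h$ are entire $\cH$-valued functions for every fixed $h$, so the $\cB(\cH)$-valued maps $z\mapsto Y(z,x,x_0)$ and $z\mapsto Y'(z,x,x_0)$ are weakly entire; by the classical equivalence of weak and norm analyticity for Banach-space-valued functions, they are norm-entire. Property (ii) is, by definition, the assertion that $Y'(z,\cdot,x_0)h$ is strongly differentiable on $(a,b)\bs E$ for every $h\in\cH$, which is precisely Theorem \ref{t2.3}(iii). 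For property (i), I would first deduce local $\cB(\cH)$-norm continuity of $Y(z,\cdot,x_0)$ and $Y'(z,\cdot,x_0)$ from the integral equation \eqref{2.1a} (and the equation obtained by differentiating it once in $x$) applied to the data $(Y_0h,Y_1h,Fh)$: every term is linear in $h$ and every $\cH$-norm bound involves only $L^1_\loc$-integrals of $\|V(\cdot)\|_{\cB(\cH)}$ and $\|F(\cdot)\|_{\cB(\cH)}$, so taking the supremum over $\|h\|_\cH\le1$ converts pointwise estimates into operator-norm estimates. Norm continuity of $Y'(z,\cdot,x_0)$ then makes the $\cB(\cH)$-valued Bochner integral $\int_{x_1}^{x} Y'(z,x'',x_0)\,dx''$ meaningful, and the identity
\[
Y(z,x,x_0)=Y(z,x_1,x_0)+\int_{x_1}^{x} Y'(z,x'',x_0)\,dx'',\quad x_1,x\in(a,b),
\]
which is verified by applying both sides to an arbitrary $h\in\cH$ and invoking Theorem \ref{t2.3}(ii), shows that $Y(z,\cdot,x_0)$ is $C^1$ in the $\cB(\cH)$-norm with norm derivative $Y'(z,\cdot,x_0)$.

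The main obstacle is the upgrade in (i) and (iii) from the pointwise, $\cH$-valued control of $y(z,x,x_0;h)$ and $y'(z,x,x_0;h)$ furnished by Theorem \ref{t2.3} to uniform $\cB(\cH)$-norm control of $Y$ and $Y'$. This step works only because our hypotheses bound $V$ and $F$ in the operator norm, which lets the $h$-dependence in every inequality factor out cleanly as $\|h\|_\cH$ and yields a finite supremum over the unit ball of $\cH$; it is this observation, together with Dunford's weak-equals-norm analyticity theorem, that carries all the regularity assertions of Corollary \ref{c2.5} from Theorem \ref{t2.3}.
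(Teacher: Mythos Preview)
Your proposal is correct and follows essentially the same approach as the paper: reduce to Theorem \ref{t2.3} by applying it to the vector data $(Y_0h,Y_1h,Fh)$, then upgrade the resulting strong (pointwise-in-$h$) regularity to $\cB(\cH)$-norm regularity. The paper likewise invokes Theorem \ref{t2.3}\,(iv) plus weak-equals-norm analyticity for (iii), and obtains (ii) directly from Theorem \ref{t2.3}\,(iii).

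There are a few minor tactical differences worth noting. For the $h$-independence of the exceptional set $E$, you argue directly via Lemma \ref{l2.1} (applied to $V$ and to $F$), whereas the paper takes a countable orthonormal basis $\{e_n\}$ and sets $E=\bigcup_n E_n$; your route is slightly cleaner. For local boundedness of $\|Y(z,\cdot,x_0)\|_{\cB(\cH)}$, you read it off the explicit series bound \eqref{2.2c}, while the paper instead uses the uniform boundedness principle on the strongly continuous family $x\mapsto Y(z,x,x_0)$. For item (i), the paper writes $Y$ via the double-integral representation (the analogue of \eqref{2.3a}) and estimates the difference quotient directly, whereas you first establish norm continuity of $Y'$ and then invoke the fundamental theorem of calculus; both arguments rely on the same ingredient, namely that $\|V(\cdot)\|_{\cB(\cH)}\|Y(z,\cdot,x_0)\|_{\cB(\cH)}+\|F(\cdot)\|_{\cB(\cH)}\in L^1_{\loc}$, so they are interchangeable.
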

\begin{proof}
Applying Theorem~\ref{t2.3} to $h_0=Y_0h$, $h_1=Y_1h$, and $f(x)=F(x)h$ with $h\in\cH$ yields a unique vector-valued solution $y_h(z,x,x_0)$. Since $y_h(z,x,x_0)$ depends continuously on $h$ by Theorem \ref{t2.3} $(i)$, this yields a unique operator-valued solution $Y(z,\cdot,x_0):(a,b)\to\cB(\cH)$ of the initial value problem \eqref{2.3}, where $Y(z,x,x_0)h=y_h(z,x,x_0)$ for all $h\in\cH$.

It follows from Theorem~\ref{t2.3}\,$(ii)$ that for fixed $x_0\in(a,b)$, $z\in\bbC$, and every $h\in\cH$, $\|Y(z,\cdot,x_0)h\|_\cH$ is continuous on $(a,b)$ and hence bounded on every compact subset of $(a,b)$. Thus, it follows from the uniform boundedness principle (cf.\
\cite[Thm.\ III.1.3.29]{Ka80}) that $\|Y(z,\cdot,x_0)\|_{\cB(\cH)}$ is bounded on every compact subset of $(a,b)$.

Moreover, Theorem~\ref{t2.3}\,$(ii)$ and $(iii)$ also imply that $Y(z,x,x_0)$ and $Y'(z,x,x_0)$ are differentiable with respect to $x$ in the strong operator topology. Hence, using
\begin{align}
\begin{split}
&Y(z,x,x_0)h = \cos\big(z^{1/2}(x-x_0)\big)Y_0h +
z^{-1/2} \sin\big(z^{1/2}(x-x_0)\big) Y_1h
\\
&\quad + \int_{x_0}^x dx' \bigg(\int_{x_0}^{x'} dx'' \,
[zY(z,x'',x_0)h-V(x'')Y(z,x'',x_0)h+F(x'')h]\bigg).
\end{split} \lb{2.3a}
\end{align}
one computes
\begin{align}
\begin{split}
&\Big\|\f1t[Y(z,x+t,x_0)-Y(z,x,x_0)]h-Y'(z,x,x_0)h\Big\|_\cH
\\
&\quad \le
\Oh(t)\|Y_0\|_{\cB(\cH)}\|h\|_\cH + \Oh(t)\|Y_1\|_{\cB(\cH)}\|h\|_\cH
\\
&\qquad + \f{1}{|t|} \bigg(\int_{x}^{x+t} dx' \bigg(\int_{x}^{x'} dx'' \,
[|z|+\|V(x'')\|_{\cB(\cH)}]\|Y(z,x'',x_0)\|_{\cB(\cH)}\bigg)\bigg) \|h\|_\cH
\\
&\qquad + \f{1}{|t|}\bigg(\int_{x}^{x+t} dx' \bigg(\int_{x}^{x'}
dx'' \, \|F(x'')\|_{\cB(\cH)}\bigg)\bigg) \|h\|_\cH.
\end{split} \lb{2.3b}
\end{align}
Since the right hand-side vanishes as $t\downarrow 0$ uniformly in $h\in\cH$ with
$\|h\|_\cH\le1$, the solution $Y(z,x,x_0)$ is differentiable with respect to $x$ in the
$\cB(\cH)$-norm topology. Similarly one uses \eqref{2.3a} to verify continuity of
$Y'(z,x,x_0)$ with respect to $x$ in the $\cB(\cH)$-norm topology, implying item $(i)$.

Item $(ii)$ follows directly from Theorem \ref{t2.3}\,$(iii)$ with the set $E$ possibly
dependent on $h\in\cH$. To remove the $h$-dependence one chooses an orthonormal
basis $\{e_n\}_{n\in\bbN}\subset\cH$ and let $E_n$ be the corresponding exceptional sets. Then $E=\bigcup_{n=1}^\infty E_n$ can be used as the exceptional set in item $(ii)$.

Finally, by Theorem~\ref{t2.3} $(iv)$, $Y(z,x,x_0)$ and $Y'(z,x,x_0)$ are
entire with respect to $z$ in the strong operator topology and hence by
\cite[Theorem III.1.37]{Ka80} also in the $\cB(\cH)$-topology, implying item $(iii)$.
\end{proof}

Various versions of Theorem \ref{t2.3} and Corollary \ref{c2.5} exist in the literature
under varying assumptions on $V$ and $f, F$. For instance, the case where $V(\cdot)$ is continuous in the $\cB(\cH)$-norm and $F=0$ is discussed in \cite[Theorem\ 6.1.1]{Hi69}.
The case, where $\|V(\cdot)\|_{\cB(\cH} \in L^1_{\loc}([a,c];dx)$ for all $c>a$ and
$F=0$ is discussed in detail in \cite{Sa71} (it appears that a measurability assumption
of $V(\cdot)$ in the $\cB(\cH)$-norm is missing in the basic set of
hypotheses of \cite{Sa71}). Our extension to $V(\cdot)$ weakly measurable and
$\|V(\cdot)\|_{\cB(\cH} \in L^1_{\loc}([a,b);dx)$ may well be the most general one
published to date, but we obviously claim no originality in this context.

\begin{definition} \lb{d2.6}
Pick $c \in (a,b)$.
The endpoint $a$ (resp., $b$) of the interval $(a,b)$ is called {\it regular} for the operator-valued differential expression $- (d^2/dx^2) + Q(\cdot)$ if it is finite and if $Q$ is weakly measurable and $\|Q(\cdot)\|_{\cB(\cH)}\in  L^1_{\loc}([a,c];dx)$ (resp.,
$\|Q(\cdot)\|_{\cB(\cH)}\in  L^1_{\loc}([c,b];dx)$) for some $c\in (a,b)$. Similarly,
$- (d^2/dx^2) + Q(\cdot)$ is called {\it regular at $a$} (resp., {\it regular at $b$}) if
$a$ (resp., $b$) is a regular endpoint for $- (d^2/dx^2) + Q(\cdot)$.
\end{definition}

We note that if $a$ (resp., $b$) is regular for $- (d^2/dx^2) + Q(x)$, one may allow for
$x_0$ to be equal to $a$ (resp., $b$) in the existence and uniqueness Theorem \ref{t2.3}.

If $f_1, f_2$ are strongly continuously differentiable $\cH$-valued functions, we define the Wronskian of $f_1$ and $f_2$ by
\begin{equation}
W_{*}(f_1,f_2)(x)=(f_1(x),f'_2(x))_\cH - (f'_1(x),f_2(x))_\cH,    \lb{2.31A}
\quad x \in (a,b).
\end{equation}
If $f_2$ is an $\cH$-valued solution of $-y''+Qy=0$ and $f_1$ is an $\cH$-valued
solution of $-y''+Q^*y=0$, their Wronskian $W_{*}(f_1,f_2)(x)$ is $x$-independent, that is,
\begin{equation}
\f{d}{dx} W_{*}(f_1,f_2)(x) = 0, \, \text{ for a.e.\ $x \in (a,b)$.}   \lb{2.32A}
\end{equation}
Equation \eqref{2.52A} will show that the right-hand side of \eqref{2.32A} actually
vanishes for all $x \in (a,b)$.

We decided to use the symbol $W_{*}(\cdot,\cdot)$ in \eqref{2.31A} to indicate its
conjugate linear behavior with respect to its first entry.

Similarly, if $F_1,F_2$ are strongly continuously differentiable $\cB(\cH)$-valued
functions, their Wronskian is defined by
\begin{equation}
W(F_1,F_2)(x) = F_1(x) F'_2(x) - F'_1(x) F_2(x), \quad x \in (a,b).    \lb{2.33A}
\end{equation}
Again, if $F_2$ is a $\cB(\cH)$-valued solution of  $-Y''+QY = 0$ and $F_1$ is a
$\cB(\cH)$-valued solution of $-Y'' + Y Q = 0$ (the latter is equivalent to
$- {(Y^{*})}^{\prime\prime} + Q^* Y^* = 0$ and hence can be handled in complete analogy
via Theorem \ref{t2.3} and Corollary \ref{c2.5}, replacing $Q$ by $Q^*$) their Wronskian will be $x$-independent,
\begin{equation}
\f{d}{dx} W(F_1,F_2)(x) = 0 \, \text{ for a.e.\ $x \in (a,b)$.}
\end{equation}

Our main interest is in the case where $V(\cdot)=V(\cdot)^* \in \cB(\cH)$ is self-adjoint,
that is, in the differential equation $\tau \eta=z \eta$, where $\eta$ represents an $\cH$-valued, respectively, $\cB(\cH)$-valued solution (in the sense of Definitions \ref{d2.2},
resp., \ref{d2.4}), and where $\tau$ abbreviates the operator-valued differential expression
\begin{equation} \label{2.4}
\tau =-(d^2/dx^2) + V(\cdot).
\end{equation}
To this end, we now introduce the following basic assumption:

\begin{hypothesis} \lb{h2.7}
Let $(a,b)\subseteq\bbR$, suppose that $V:(a,b)\to\cB(\cH)$ is a weakly
measurable operator-valued function with $\|V(\cdot)\|_{\cB(\cH)}\in L^1_\loc((a,b);dx)$,
and assume that $V(x) = V(x)^*$ for a.e.\ $x \in (a,b)$.
\end{hypothesis}

Moreover, for the remainder of this section we assume that $\alpha \in \cB(\cH)$ is a
self-adjoint operator,
\begin{equation}
\alpha = \alpha^* \in \cB(\cH).      \lb{2.4A}
\end{equation}

Assuming Hypothesis \ref{h2.7} and \eqref{2.4A}, we introduce the standard fundamental systems of operator-valued solutions of $\tau y=zy$ as follows: Since $\alpha$ is a bounded self-adjoint operator, one may define the self-adjoint operators $A=\sin(\alpha)$ and $B=\cos(\alpha)$ via the spectral theorem. One then concludes that
$\sin^2(\alpha) + \cos^2(\alpha) = I_\cH$ and $[\sin\alpha,\cos\alpha]=0$ (here
$[\cdot,\cdot]$ represents the commutator symbol). The spectral theorem implies also
that the spectra of $\sin(\alpha)$ and $\cos(\alpha)$ are contained in $[-1,1]$ and that the spectra of $\sin^2(\alpha)$ and $\cos^2(\alpha)$ are contained in $[0,1]$. Given such an operator $\alpha$ and a point $x_0\in(a,b)$ or a regular endpoint for $\tau$, we now
define $\theta_\alpha(z,\cdot, x_0,), \phi_\alpha(z,\cdot,x_0)$ as those $\cB(\cH)$-valued
solutions of $\tau Y=z Y$ (in the sense of Definition \ref{d2.4}) which satisfy the initial
conditions
\begin{equation}
\theta_\alpha(z,x_0,x_0)=\phi'_\alpha(z,x_0,x_0)=\cos(\alpha), \quad
-\phi_\alpha(z,x_0,x_0)=\theta'_\alpha(z,x_0,x_0)=\sin(\alpha).    \lb{2.5}
\end{equation}

By Corollary 2.5\,$(iii)$, for any fixed $x, x_0\in(a,b)$, the functions
$\theta_{\alpha}(z,x,x_0)$ and $\phi_{\alpha}(z,x,x_0)$ as well as their strong $x$-derivatives are entire with respect to $z$ in the $\cB(\cH)$-norm. The same is true for the functions $z\mapsto\theta_{\alpha}(\ol{z},x,x_0)^*$ and
$z\mapsto\phi_{\alpha}(\ol{z},x,x_0)^*$.

Since $\theta_{\alpha}(\bar z,\cdot,x_0)^*$ and $\phi_{\alpha}(\bar z,\cdot,x_0)^*$ satisfy
the adjoint equation $-Y''+YV=z Y$ and the same initial conditions as $\theta_\alpha$ and
$\phi_\alpha$, respectively, one obtains the following identities from the constancy of Wronskians:
\begin{align}
\theta_{\alpha}' (\bar z,x,x_0)^*\theta_{\alpha} (z,x,x_0)-
\theta_{\alpha} (\bar z,x,x_0)^*\theta_{\alpha}' (z,x,x_0)&=0, \label{2.7f}
\\
\phi_{\alpha}' (\bar z,x,x_0)^*\phi_{\alpha} (z,x,x_0)-
\phi_{\alpha} (\bar z,x,x_0)^*\phi_{\alpha}' (z,x,x_0)&=0, \label{2.7g}
\\
\phi_{\alpha}' (\bar z,x,x_0)^*\theta_{\alpha} (z,x,x_0)-
\phi_{\alpha} (\bar z,x,x_0)^*\theta_{\alpha}' (z,x,x_0)&=I_{\cH}, \label{2.7h}
\\
\theta_{\alpha} (\bar z,x,x_0)^*\phi_{\alpha}' (z,x,x_0)
- \theta_{\alpha}' (\bar z,x,x_0)^*\phi_{\alpha} (z,x,x_0)&=I_{\cH}. \label{2.7i}
\end{align}
Equations \eqref{2.7f}--\eqref{2.7i} are equivalent to the statement that the block operator
\begin{equation}
\Theta_{\alpha}(z,x,x_0)=\begin{pmatrix}\theta_{\alpha}(z,x,x_0)&\phi_{\alpha}(z,x,x_0)\\ \theta_{\alpha}'(z,x,x_0)&\phi_{\alpha}'(z,x,x_0) \end{pmatrix}    \label{2.7ia}
\end{equation}
has a left inverse given by
\begin{equation}
\begin{pmatrix}\phi_{\alpha}'(\bar z,x,x_0)^*&-\phi_{\alpha}(\bar z,x,x_0)^*\\
-\theta_{\alpha}'(\bar z,x,x_0)^*&\theta_{\alpha}(\bar z,x,x_0)^*
\end{pmatrix}.    \label{2.7ib}
\end{equation}
Thus the operator $\Theta_{\alpha}(z,x,x_0)$ is injective. It is also surjective as will be shown next: Let $(f_1,g_1)^\top$ be an arbitrary element of $\cH\oplus\cH$ and let $y$ be an $\cH$-valued solution of the initial value problem
\begin{equation}
\begin{cases} \tau y=zy, \\ y(x_1)=f_1, \; y'(x_1)=g_1, \end{cases}
\end{equation}
for some given $x_1\in(a,b)$. One notes that due to the initial conditions specified in
\eqref{2.5}, $\Theta_{\alpha}(z,x_0,x_0)$ is bijective. We now assume that $(f_0,g_0)^\top$ are given by
\begin{equation}
\Theta_{\alpha}(z,x_0,x_0)\begin{pmatrix}f_0\\ g_0\end{pmatrix}=\begin{pmatrix}y(x_0)\\ y'(x_0)\end{pmatrix}.   \label{2.7ic}
\end{equation}
The existence and uniqueness Theorem \ref{t2.3} then yields that
\begin{equation}
\Theta_{\alpha}(z,x_1,x_0)\begin{pmatrix}f_0\\ g_0\end{pmatrix}=\begin{pmatrix}f_1\\ g_1\end{pmatrix}.    \label{2.7id}
\end{equation}
This establishes surjectivity of $\Theta_{\alpha}(z,x_1,x_0)$ which therefore has a right inverse, too, also given by \eqref{2.7ib}. This fact then implies the following identities:
\begin{align}
\phi_{\alpha} (z,x,x_0)\theta_{\alpha} (\bar z,x,x_0)^*-
\theta_{\alpha} (z,x,x_0)\phi_{\alpha} (\bar z,x,x_0)^*&=0, \label{2.7j}
\\
\phi_{\alpha}' (z,x,x_0)\theta_{\alpha}' (\bar z,x,x_0)^*-
\theta_{\alpha}' (z,x,x_0)\phi_{\alpha}' (\bar z,x,x_0)^*&=0, \label{2.7k}
\\
\phi_{\alpha}' (z,x,x_0)\theta_{\alpha} (\bar z,x,x_0)^*-
\theta_{\alpha}' (z,x,x_0)\phi_{\alpha} (\bar z,x,x_0)^*&=I_{\cH}, \label{2.7l}
\\
\theta_{\alpha} (z,x,x_0)\phi_{\alpha}' (\bar z,x,x_0)^*-
\phi_{\alpha} (z,x,x_0)\theta_{\alpha}' (\bar z,x,x_0)^*&=I_{\cH}. \label{2.7m}
\end{align}

Having established the invertibility of $\Theta_\alpha(z,x_1,x_0)$ we can now show that
for any $x_1\in(a,b)$, any $\cH$-valued solution of $\tau y=zy$ may be expressed in terms of
$\theta_{\alpha}(z,\cdot,x_1)$ and $\phi_{\alpha}(z,\cdot,x_1)$, that is,
\begin{equation}
y(x)=\theta_\alpha(z,x,x_1)f+\phi_\alpha(z,x,x_1)g
\end{equation}
for appropriate vectors $f,g\in\cH$ or $\cB(\cH)$.

Next we establish a variation of constants formula.

\begin{lemma} \label{l2.8}
Suppose $F:(a,b)\to\cB(\cH)$ is a weakly measurable operator-valued function such that
$\|F(\cdot)\|_{\cB(\cH)} \in L^1_{\loc}((a,b); dx)$, assume that
$Y_0,Y_1\in\cB(\cH)$, and let $x_0 \in (a,b)$. Then the unique $\cB(\cH)$-valued solution
$Y(z,\cdot,x_0)$ of the initial value problem
\begin{equation}
\begin{cases} (\tau - z)Y = F, \\
\, Y(x_0)=Y_0, \; Y'(x_0)=Y_1, \end{cases}
\end{equation}
is given by $Y_h+Y_p$, where $Y_p$ is the particular solution of
$(\tau - z)Y = F$ $($in the sense of Definition \ref{d2.4}$)$ of the form
\begin{align}
\begin{split}
Y_p(x)& =\theta_\alpha(z,x,x_0)\int_{x_0}^x dx' \, \phi_\alpha(\bar z,x',x_0)^* F(x')   \\
& \quad  -\phi_\alpha(z,x,x_0)\int_{x_0}^x dx' \, \theta_\alpha(\bar z,x',x_0)^* F(x'),  \lb{2.19}
\end{split}
\end{align}
and $Y_h$ is the unique solution of the homogeneous initial value problem $($again
in the sense of Definition \ref{d2.4}$)$
\begin{equation}
\begin{cases} \tau Y=z Y, \\
Y(x_0)=Y_0, \; Y'(x_0)=Y_1. \end{cases}
\end{equation}
The analogous statement holds  when $F$ is replaced by
$f\in L^1_{\rm loc}((a,b);dx;\cH)$ and $Y_0, Y_1$ are replaced by $y_0, y_1\in\cH$.
\end{lemma}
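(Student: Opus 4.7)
The plan is to prove uniqueness via Corollary \ref{c2.5} applied to differences (the homogeneous initial value problem with $F=0$, $Y_0=Y_1=0$ admits only the zero solution), and to construct the solution as $Y=Y_h+Y_p$, where $Y_h$ is supplied directly by Corollary \ref{c2.5} applied to $(\tau-z)Y=0$ with initial data $Y_0,Y_1$. It then suffices to verify that the $Y_p$ defined in \eqref{2.19} is a $\cB(\cH)$-valued solution of $(\tau-z)Y=F$ (in the sense of Definition \ref{d2.4}) satisfying $Y_p(x_0)=Y_p'(x_0)=0$.

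To make sense of \eqref{2.19}, I would interpret the two operator-valued integrals strongly: for each $h\in\cH$, Lemma \ref{l2.1} (applied with the weakly measurable operator-valued factor in the integrand and $h$ viewed as the continuous constant $\cH$-valued function) together with the fact that $F(\cdot)h$ is strongly measurable with $\|F(\cdot)h\|_\cH \in L^1_\loc$ implies that $x'\mapsto\phi_\alpha(\bar z,x',x_0)^*F(x')h$ is strongly measurable and locally Bochner integrable in $\cH$, so that
\[
\bigg(\int_{x_0}^x dx'\,\phi_\alpha(\bar z,x',x_0)^*F(x')\bigg)h := \int_{x_0}^x dx'\,\phi_\alpha(\bar z,x',x_0)^*F(x')h
\]
defines a $\cB(\cH)$-valued function of $x$ whose norm is majorized by $\int_{x_0}^x\|\phi_\alpha(\bar z,x',x_0)^*\|_{\cB(\cH)}\|F(x')\|_{\cB(\cH)}\,dx'$; the $\theta_\alpha$-integral is handled analogously. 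The initial condition $Y_p(x_0)=0$ is then immediate.

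Applying the product rule for strong differentiation—using Corollary \ref{c2.5}$(i)$ for the $\theta_\alpha,\phi_\alpha$ prefactors and the vector-valued Lebesgue differentiation theorem combined with Lemma \ref{l2.1} (plus an orthonormal-basis union-of-null-sets argument as in the proof of Corollary \ref{c2.5}$(ii)$ to remove $h$-dependence of the exceptional set) for the integrals—the endpoint terms at $x$ add up to $[\theta_\alpha(z,x,x_0)\phi_\alpha(\bar z,x,x_0)^*-\phi_\alpha(z,x,x_0)\theta_\alpha(\bar z,x,x_0)^*]F(x)$, which vanishes identically by the Wronskian identity \eqref{2.7j}. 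One therefore obtains
\[
Y_p'(x)=\theta_\alpha'(z,x,x_0)\!\int_{x_0}^x\! dx'\,\phi_\alpha(\bar z,x',x_0)^*F(x)-\phi_\alpha'(z,x,x_0)\!\int_{x_0}^x\! dx'\,\theta_\alpha(\bar z,x',x_0)^*F(x'),
\]
so $Y_p'(x_0)=0$. A second strong differentiation produces endpoint terms $[\theta_\alpha'(z,x,x_0)\phi_\alpha(\bar z,x,x_0)^*-\phi_\alpha'(z,x,x_0)\theta_\alpha(\bar z,x,x_0)^*]F(x)$, which equal $-F(x)$ by \eqref{2.7l}, together with integral terms that combine, upon invoking $\theta_\alpha''=(V-z)\theta_\alpha$ and $\phi_\alpha''=(V-z)\phi_\alpha$, into $(V(x)-z)Y_p(x)$. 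Hence $Y_p''(x)=(V(x)-z)Y_p(x)-F(x)$, i.e., $-Y_p''+(V-z)Y_p=F$ on $(a,b)\bs E$, as required.

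The principal technical obstacle lies in the rigorous interpretation of the operator-valued integrals and the justification of termwise strong differentiation outside an $h$-independent null set $E\subset(a,b)$; both issues are resolved by the Pettis/Lemma \ref{l2.1} machinery already deployed in the proofs of Theorem \ref{t2.3} and Corollary \ref{c2.5}. The analogous statement with $F$ replaced by $f\in L^1_\loc((a,b);dx;\cH)$ and $Y_0,Y_1$ replaced by $y_0,y_1\in\cH$ follows by the identical computation, using Theorem \ref{t2.3} in place of Corollary \ref{c2.5} at each step.
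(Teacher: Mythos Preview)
Your proposal is correct and follows exactly the approach indicated in the paper: the paper's proof consists of the single sentence ``This follows from a direct computation taking into account the identities \eqref{2.7j} and \eqref{2.7l},'' and you have carried out precisely that direct computation, invoking those two Wronskian identities at the appropriate points. (One minor typo: in your displayed formula for $Y_p'(x)$ the first integrand should end with $F(x')$, not $F(x)$.)
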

\begin{proof}
This follows from a direct computation taking into account the identities \eqref{2.7j} and
\eqref{2.7l}.
\end{proof}

Finally we establish several versions of Green's formula (also called Lagrange's identity) which will be used frequently in the following.

\begin{lemma} \label{l2.9}
Let $(a,b)\subseteq\bbR$ be a finite or infinite interval and $[x_1,x_2]\subset(a,b)$. \\
$(i)$ Assume that $f,g\in W^{2,1}_{\rm loc}((a,b);dx;\cH)$. Then
\begin{equation}
\int_{x_1}^{x_2} dx \, [((\tau f)(x),g(x))_\cH-(f(x),(\tau g)(x))_\cH]
= W_{*}(f,g)(x_2)-W_{*}(f,g)(x_1).     \lb{2.52A}
\end{equation}
$(ii)$ Assume that $F:(a,b)\to\cB(\cH)$ is absolutely continuous, that $F'$ is again differentiable, and that $F''$ is weakly measurable. Also assume that $\|F''\|_\cH \in L^1_\loc((a,b);dx)$ and $g\in W^{2,1}_{\rm loc}((a,b);dx;\cH)$. Then
\begin{equation}
\int_{x_1}^{x_2} dx \, [(\tau F^*)^*(x)g(x)-F(x)(\tau g)(x)]
= (Fg'-F'g)(x_2)-(Fg'-F'g)(x_1).     \lb{2.52B}
\end{equation}
$(iii)$ Assume that $F,\,G:(a,b)\to\cB(\cH)$ are absolutely continuous operator-valued functions such that $F',\,G'$ are again differentiable and that $F''$, $G''$ are weakly measurable. In addition, suppose that $\|F''\|_\cH,\, \|G''\|_\cH \in L^1_\loc((a,b);dx)$. Then
\begin{equation}
\int_{x_1}^{x_2} dx \, [(\tau F^*)(x)^*G(x) - F(x) (\tau G)(x)] = (FG'-F'G)(x_2)-(FG'-F'G)(x_1).
\lb{2.53A}
\end{equation}
\end{lemma}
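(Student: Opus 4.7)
The plan is to reduce part (iii) to part (ii) by applying the latter to $g(x) := G(x)h$ for each fixed $h \in \cH$. Under the hypotheses on $G$, the $\cH$-valued function $g$ is strongly continuously differentiable with $g'(x) = G'(x)h$, strongly differentiable a.e.\ with $g''(x) = G''(x)h$ off the null set where $G''$ fails to exist, and $\|g''(\cdot)\|_{\cH}\le\|G''(\cdot)\|_{\cB(\cH)}\|h\|_{\cH}\in L^1_{\loc}((a,b);dx)$, so $g\in W^{2,1}_{\loc}((a,b);dx;\cH)$. Moreover $(\tau g)(x) = -G''(x)h + V(x)G(x)h = ((\tau G)(x))h$ a.e.\ on $(a,b)$. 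Part (ii) then delivers the vector identity
\begin{equation*}
\int_{x_1}^{x_2} dx\,\bigl[(\tau F^*)^*(x)G(x)h - F(x)(\tau G)(x)h\bigr] = \bigl[(FG'-F'G)(x_2)-(FG'-F'G)(x_1)\bigr]h.
\end{equation*}

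Next I would verify that $x\mapsto (\tau F^*)^*(x)G(x) - F(x)(\tau G)(x)$ is locally Bochner integrable as a $\cB(\cH)$-valued function, so that the preceding identity is the evaluation at $h$ of an operator identity. Using $V=V^*$ together with the commutation of adjoint and strong differentiation $((F^*)'')^* = F''$ a.e.\ (as invoked in the paragraph preceding Definition \ref{d2.4}), one finds $(\tau F^*)^* = -F'' + FV$ a.e., and hence
\begin{equation*}
(\tau F^*)^*G - F(\tau G) = FG'' - F''G \quad \text{a.e.\ on } (a,b).
\end{equation*}
This expression is weakly measurable in $\cB(\cH)$ (combining Lemma \ref{l2.1}, applied to $F''(\cdot)G(\cdot)e_n$ and $F(\cdot)G''(\cdot)e_n$ for a complete orthonormal system $\{e_n\}_{n\in\bbN}$, with Pettis' theorem), and its $\cB(\cH)$-norm is dominated pointwise by $\|F(\cdot)\|_{\cB(\cH)}\|G''(\cdot)\|_{\cB(\cH)} + \|F''(\cdot)\|_{\cB(\cH)}\|G(\cdot)\|_{\cB(\cH)}$, which lies in $L^1_{\loc}((a,b);dx)$ by hypothesis (local boundedness of $F$ and $G$ follows from their strong continuous differentiability and the uniform boundedness principle). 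Thus the integral on the left of (iii) is well defined as a strong operator integral, and since $h\in\cH$ is arbitrary, the operator identity (iii) drops out of the vector identity above.

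The only real obstacle is bookkeeping: one must amalgamate the various Lebesgue null sets on which $F''$ and $G''$ fail to exist, where the adjoint/differentiation identity fails, and those arising from Lemma \ref{l2.1}, into a single null set independent of $h\in\cH$. This is handled by the standard device already used in the proof of Corollary \ref{c2.5}\,$(ii)$: fix an orthonormal basis $\{e_n\}_{n\in\bbN}$ of $\cH$, take the countable union of the exceptional sets produced when the argument is applied to each $e_n$, and extend to arbitrary $h\in\cH$ by the continuous and linear dependence of $g,g',g''$ and of the integrands on $h$. No deeper analytic input is needed beyond Theorem \ref{t2.3}, Corollary \ref{c2.5}, and part (ii).
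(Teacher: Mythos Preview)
Your approach to part (iii) is essentially the paper's: both reduce to the vector-valued case by fixing $h\in\cH$, setting $g=Gh$, establishing measurability and local integrability of $F''Gh$, $FG''h$, $FVGh$ via Lemma \ref{l2.1}, and then invoking the integration-by-parts argument from (ii). The paper phrases this as ``the integral on the left-hand side of \eqref{2.53A} is well-defined in the strong sense'' followed by ``integration by parts as before,'' which is exactly your reduction.

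One minor caveat: your intermediate claim that the integrand is Bochner integrable as a $\cB(\cH)$-valued function, justified via Pettis' theorem, is not on firm ground, since $\cB(\cH)$ is not separable when $\dim\cH=\infty$ and Pettis requires (essential) separable range. This is harmless, because you immediately retreat to the strong operator integral interpretation, which is both what you actually need and what the paper uses. You can simply drop the $\cB(\cH)$-Bochner detour and argue directly that for each $h\in\cH$ the $\cH$-valued integrand is Bochner integrable; the resulting operator is then well defined pointwise in $h$, bounded by the norm estimate you wrote down, and the identity follows for all $h$. The null-set bookkeeping you describe is more explicit than the paper's treatment but entirely in the same spirit. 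Your proposal does not separately address parts (i) and (ii); the paper handles (i) by the product rule for the $\cH$-inner product plus $V=V^*$, and (ii) by the same product-rule/integration-by-parts computation you are invoking.
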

\begin{proof}
The product rule for scalar products
\begin{equation}
\f{d}{dx}(f(x),g(x))_\cH =(f(x),g'(x))_\cH+(f'(x),g(x))_\cH
\end{equation}
implies, as usual, the formula for integration by parts. Equation \eqref{2.52A} is then an immediate consequence of the latter and the fact that $V$ is self-adjoint so that $(Vf,g)_\cH=(f,Vg)_\cH$.

To prove \eqref{2.52B}, we first note that $g:(a,b)\to\cH$ is strongly continuous so that, by Lemma \ref{l2.1} the function $F''g$ is (strongly) measurable and integrable.
Lemma \ref{l2.1} then shows that also $Fg''$ and $FVg$ are measurable. Consequently, the integral on the left-hand side of \eqref{2.52B} is well-defined in the strong sense. The remainder of the proof relies again on a product rule. The product rule follows from the fact that each summand in
\begin{align}
\begin{split}
& \bigg\|F(x+\varepsilon)\bigg(\frac{g(x+\varepsilon)-g(x)}{\varepsilon}-g'(x)\bigg)\bigg\|
+ \|(F(x+\varepsilon)-F(x))g'(x)\| \\
& \quad + \bigg\|\bigg(\frac{F(x+\varepsilon)g(x)-F(x)g(x)}{\varepsilon}
- F'(x)g(x)\bigg)\bigg\| 
\end{split}
\end{align}
tends to zero as $\varepsilon \downarrow 0$, recalling that $x \in (a,b)$ is fixed.

Finally, to prove \eqref{2.53A}, we first note that $Gh:(a,b)\to\cH$ is strongly continuous for any $h\in\cH$. Again, Lemma \ref{l2.1} shows that $F''Gh$ is strongly measurable and integrable for any $h\in\cH$. The same applies to the terms $FG''h$ and $FVGh$. Consequently, the integral on the left-hand side of \eqref{2.53A} is well-defined in the strong sense. The stated equality \eqref{2.53A} now follows from an integration by parts as before.
\end{proof}

\begin{lemma} \label{l2.10}
Suppose that $y_0, y_1 \in \cH$ and either $x_0 \in (a,b)$ or $x_0$ is a regular endpoint of
$\tau$. Let $y(z,\cdot, x_0)$ be the unique solution of
\begin{equation}
\begin{cases} \tau y=zy, \\
y(x_0)=y_0, \; y'(x_0)=y_1. \end{cases}
\end{equation}
Then there is a constant $c_0 > 0$ and a constant $C(z,V)\leq 1$ depending only on $z$ and
$V$ such that
\begin{equation}
\int_{x_0}^x dx' \, \|y(x')\|_\cH^2 \geq c_0^2 (x-x_0)^3
\big\|(y_0,y_1)^\top\big\|_{\cH\oplus\cH}^2
\end{equation}
provided $0\leq x-x_0\leq C(z,V)$. A similar estimate holds for $x<x_0$.
\end{lemma}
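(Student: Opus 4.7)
My strategy is to exploit that for $\delta := x - x_0$ small, $y(x)$ is close to the tangent affine function $y_0 + y_1(x-x_0)$. Integrating the equation $y'' = (V-z)y$ twice will yield the Volterra-type representation
\begin{equation*}
y(x_0 + s) = y_0 + s y_1 + R(s), \quad R(s) := \int_{x_0}^{x_0+s} (x_0+s-x')[V(x') - z] y(x')\,dx',
\end{equation*}
valid for $0 \le s \le \delta$. I will bound the affine leading part from below and the remainder $R(s)$ from above, and then absorb the latter into the former by choosing $C(z,V) \in (0, 1]$ sufficiently small.

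The heart of the argument will be the elementary inequality
\begin{equation*}
\int_0^\delta \|y_0 + s y_1\|_\cH^2\,ds \ge c_0^2\,\delta^3 \bigl(\|y_0\|_\cH^2 + \|y_1\|_\cH^2\bigr), \quad \delta \in (0,1],
\end{equation*}
for an absolute constant $c_0 > 0$. Computing directly,
\begin{equation*}
\int_0^\delta \|y_0 + s y_1\|_\cH^2\,ds = \delta \|y_0\|_\cH^2 + \delta^2 \Re(y_0, y_1)_\cH + \tfrac{\delta^3}{3}\|y_1\|_\cH^2,
\end{equation*}
and the Cauchy--Schwarz bound $\Re(y_0, y_1)_\cH \ge -\|y_0\|_\cH \|y_1\|_\cH$ reduces the estimate to nonnegativity on $[0,\infty)^2$ of the quadratic form $(\alpha,\beta) \mapsto (\delta - c_0^2\delta^3)\alpha^2 - \delta^2 \alpha \beta + (\delta^3/3 - c_0^2\delta^3)\beta^2$. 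The corresponding discriminant condition $4(1 - c_0^2 \delta^2)(1/3 - c_0^2) \ge 1$ holds uniformly for $\delta \in (0,1]$ whenever $c_0^2 \le (4 - \sqrt{13})/6$. The restriction $\delta \le 1$ is essential here: it is what lets me upgrade the $\delta \|y_0\|^2$ contribution into the desired $\delta^3 \|y_0\|^2$ term.

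For the remainder I will invoke \eqref{2.2c} of Theorem~\ref{t2.3} (with $f \equiv 0$) to obtain $\delta_1 \in (0,1]$ and $K = K(z,V) > 0$ such that $\|y(x_0 + s)\|_\cH \le K(\|y_0\|_\cH + \|y_1\|_\cH)$ for $s \in [0, \delta_1]$. Combined with $x_0 + s - x' \le s$ this gives
\begin{equation*}
\|R(s)\|_\cH \le s\, K\, \rho(s)\, (\|y_0\|_\cH + \|y_1\|_\cH), \quad \rho(s) := \int_{x_0}^{x_0+s} \bigl[\|V(x')\|_{\cB(\cH)} + |z|\bigr]\,dx',
\end{equation*}
with $\rho(s) \to 0$ as $s \downarrow 0$ by local integrability of $\|V(\cdot)\|_{\cB(\cH)}$. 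Combining the two estimates via $\|X+Y\|^2 \ge \|X\|^2/2 - 2\|Y\|^2$ and $(a+b)^2 \le 2(a^2+b^2)$ will yield
\begin{equation*}
\int_{x_0}^{x_0 + \delta} \|y(x')\|_\cH^2 dx' \ge \bigl[\tfrac{c_0^2}{2} - \tfrac{4}{3} K^2 \rho(\delta)^2\bigr] \delta^3 \bigl(\|y_0\|_\cH^2 + \|y_1\|_\cH^2\bigr),
\end{equation*}
so that picking $C(z,V) \in (0, \delta_1]$ small enough that $(4/3) K^2 \rho(C(z,V))^2 \le c_0^2/4$ delivers the claim (with $c_0$ replaced by $c_0/2$). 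The case $x < x_0$ follows by an identical argument with reflected limits.

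The main obstacle is the elementary inequality in the second paragraph: since $\|y_0 + s y_1\|^2$ can be arbitrarily small at $s = 0$ whenever $y_0$ is small, no pointwise lower bound is available, and the cooperation between the three terms $\delta\|y_0\|^2$, $\delta^2\Re(y_0,y_1)$, $\tfrac{\delta^3}{3}\|y_1\|^2$ has to be made quantitative. The restriction $\delta\le 1$ is precisely what lets the $\delta \|y_0\|^2$ term serve as a surrogate for a $\delta^3 \|y_0\|^2$ term; everything else is a routine Volterra/Gronwall-type perturbation built on Theorem~\ref{t2.3}.
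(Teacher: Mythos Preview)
Your proof is correct and follows essentially the same route as the paper: split $y$ into the affine part $y_0 + (x-x_0)y_1$ plus a remainder, establish the elementary lower bound $\int_0^\delta \|y_0 + sy_1\|_\cH^2\,ds \ge c_0^2\delta^3(\|y_0\|_\cH^2+\|y_1\|_\cH^2)$ for $\delta\le 1$ via the quadratic form/discriminant analysis, bound the remainder from above, and combine. The only cosmetic differences are that the paper bounds the remainder by applying Gronwall directly to $r(x)=y(x)-y_0-(x-x_0)y_1$ (rather than invoking the a priori estimate \eqref{2.2c}) and combines the two pieces via the reverse triangle inequality in $L^2$ (rather than your pointwise inequality $\|X+Y\|^2\ge \tfrac12\|X\|^2-2\|Y\|^2$); neither change affects the substance.
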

\begin{proof}
Define $r(t)=y(t)-y_0-(t-x_0)y_1$. Then $-r''=(z-V)y$ so that the vector version of the variation of constants formula (Lemma \ref{l2.8}) treating $(z-V)y$ as the non-homogeneous term implies
\begin{equation}
r(x)=\int_{x_0}^x dx' \, (x'-x)[z-V(x')] y(x').
\end{equation}
Hence,
\begin{align}
\begin{split}
\|r(x)\|_{\cH} &\leq \sqrt2 (x-x_0) \big\|(y_0,y_1)^\top\big\|_{\cH\oplus\cH}
\int_{x_0}^x dx' \, \|z-V(x')\|_{\cB(\cH)}  \\
& \quad  +(x-x_0)\int_{x_0}^x dx' \, \|z-V(x')\|_{\cB(\cH)} \|r(x')\|_{\cH},
\end{split}
\end{align}
provided $|x-x_0|\leq1$. Gronwall's lemma then implies the estimate
\begin{equation}
\|r(x)\|_{\cH} \leq C \big\|(y_0,y_1)^\top\big\|_{\cH\oplus\cH} (x-x_0)
\int_{x_0}^x dx' \, \|z-V(x')\|_{\cB(\cH)}
\end{equation}
for an appropriate constant $C$ depending on $V-z$. Thus, using an integration by parts,
\begin{equation}
\int_{x_0}^x dx' \, \|r(x')\|_\cH^2 \leq \frac13 C^2 \big\|(y_0,y_1)^\top\big\|_{\cH\oplus\cH}^2
(x-x_0)^3 \bigg(\int_{x_0}^x dx' \, \|z-V(x')\|_{\cB(\cH)}\bigg)^2.   \lb{2.28a}
\end{equation}

On the other hand,
\begin{align}
& \int_{x_0}^x dx' \, \|y_0+(x'-x_0)y_1\|_\cH^2   \no \\
& \quad \geq (x-x_0)\|y_0\|_\cH^2-(x-x_0)^2\|y_0\|_\cH\|y_1\|_\cH
+ \frac13(x-x_0)^3 \|y_1\|_\cH^2   \no \\
& \quad  \geq 4c_0^2 (x-x_0)^3 (\|y_0\|_\cH^2+\|y_1\|_\cH^2)
\end{align}
for some constant $c_0>0$, provided $x-x_0$ is sufficiently
small (for instance, $c_0=1/10$ will do if $0\leq x-x_0\leq 1$). Combining this with
\eqref{2.28a} yields
\begin{align}
\begin{split}
\bigg(\int_{x_0}^x dx' \, \|y(x')\|_\cH^2 \bigg)^{1/2} &\geq (x-x_0)^{3/2}
\big\|(y_0,y_1)^\top\big\|_{\cH\oplus\cH}   \lb{2.30a} \\
& \quad \times \bigg[2c_0-C\int_{x_0}^x dx' \, \|z-V(x')\|_{\cB(\cH)}\bigg].
\end{split}
\end{align}
Finally, if $x$ is sufficiently close to $x_0$ in \eqref{2.30a}, the term inside the square brackets will be larger than $c_0$.
\end{proof}

\section{Weyl--Titchmarsh Theory}
\lb{s3}

In this section we develop Weyl--Titchmarsh theory for self-adjoint Schr\"odinger 
operators $H_{\alpha}$ in $L^2((a,b); dx; \cH)$ associated with the operator-valued differential expression $\tau =-(d^2/dx^2)+V(\cdot)$, assuming regularity of the 
left endpoint $a$ and the limit point case at the right endpoint $b$ (see  
Definition \ref{d3.6}). We prove the existence of Weyl--Titchmarsh solutions, introduce 
the corresponding Weyl--Titchmarsh $m$-function, and determine the structure of the Green's function of $H_{\alpha}$.

The broad outline of our approach in this section follows to a certain degree the 
path taken in the scalar case by Bennewitz \cite[Chs.\ 10, 11]{Be08}, 
Edmunds and Evans \cite[Sect.\ III.10]{EE89}, and Weidmann \cite[Sect.\ 8.4]{We80}. 
However, the operator-valued context also necessitates crucial deviations from 
the scalar approach as will become clear in the course of this section.  

We note that the boundary triple approach (see, e.g., \cite{DM91}, \cite{DM95} 
\cite{MN11}, \cite{MN11a}, \cite[Chs.\ 3, 4]{GG91} and the extensive literature cited 
therein) constitutes an alternative way to introduce 
operator-valued Weyl--Titchmarsh functions. However, we are not aware that this 
approach has been established for potentials $V$ satisfying our general 
Hypothesis \ref{h2.7}. Moreover, we intend to derive the existence of Weyl--Titchmarsh 
solutions from first principles and with minimal technical efforts.  

As before, $\cH$ denotes a separable Hilbert space and $(a,b)$ denotes a finite or infinite interval. One recalls that $L^2((a,b);dx;\cH)$ is separable (since $\cH$ is) 
and that
\begin{equation}
(f,g)_{L^2((a,b);dx;\cH)} =\int_a^b dx \, (f(x),g(x))_\cH, \quad f,g\in L^2((a,b);dx;\cH).
\end{equation}

Assuming Hypothesis \ref{h2.7} throughout this section, we are interested in
studying certain self-adjoint operators in $L^2((a,b);dx;\cH)$ associated with the 
operator-valued differential expression $\tau =-(d^2/dx^2)+V(\cdot)$. These will be suitable restrictions of the {\it maximal} operator $\oT_{\max}$ in $L^2((a,b);dx;\cH)$ defined by
\begin{align}
& \oT_{\max} f = \tau f,   \no \\
& f\in \dom(\oT_{\max})=\big\{g\in L^2((a,b);dx;\cH) \,\big|\, g\in W^{2,1}_{\rm loc}((a,b);dx;\cH); \\ 
& \hspace*{6.6cm} \tau g\in L^2((a,b);dx;\cH)\big\}.     \no 
\end{align}
We also introduce the operator $\dot \oT_{\min}$ in $L^2((a,b);dx;\cH)$ as the restriction of $\oT_{\max}$ to the domain
\begin{equation}
\dom(\dot \oT_{\min})=\{g\in\dom(\oT_{\max})\,|\,\supp (u) \, \text{is compact in} \, (a,b)\}.
\end{equation}
Finally, the {\it minimal} operator $\oT_{\min}$ in $L^2((a,b);dx;\cH)$ associated with $\tau$ is then defined as the closure of $\dot \oT_{\min}$,
\begin{equation}
\oT_{\min} = \ol{\dot \oT_{\min}}.
\end{equation}

Next, we intend to show that $\oT_{\max}$ is the adjoint of $\dot \oT_{\min}$ (and hence that of $\oT_{\min}$), implying, in particular, that $\oT_{\max}$ is closed. To this end, we first establish the following two preparatory lemmas for the case where $a$ and $b$ are both regular endpoints for $\tau$ in the sense of Definition \ref{d2.6}.

\begin{lemma} \label{l3.1}
In addition to Hypothesis \ref{h2.7} suppose that $a$ and $b$ are regular endpoints for $\tau$. Then
\begin{align} \label{3.5A}
\begin{split}
& \ker(\oT_{\max}-z I_{L^2((a,b);dx;\cH)})  \\
& \quad =\{[\theta_0(z,\cdot,a)f+\phi_0(z,\cdot,a)g]
\in L^2((a,b);dx;\cH) \,|\, f,g\in\cH\}
\end{split} 
\end{align}
is a closed subspace of $L^2((a,b);dx;\cH)$.
\end{lemma}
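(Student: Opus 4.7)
The plan is to verify the two set inclusions describing $\ker(H_{\max}-zI)$ and then deduce closedness from a quantitative bound. For the inclusion ``$\supseteq$,'' given $f,g\in\cH$ with $u:=\theta_0(z,\cdot,a)f+\phi_0(z,\cdot,a)g\in L^2((a,b);dx;\cH)$, I appeal to Corollary~\ref{c2.5} (with $\alpha=0$, so that by \eqref{2.5} the initial conditions read $u(a)=f$, $u'(a)=g$; pointwise evaluation at $a$ is legitimate since $a$ is regular, as noted after Definition~\ref{d2.6}). This yields $u\in W^{2,1}_\loc([a,b);dx;\cH)$ with $\tau u=zu$ pointwise a.e., so $\tau u\in L^2((a,b);dx;\cH)$ automatically, placing $u\in\dom(H_{\max})$ and $(H_{\max}-zI)u=0$.

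For ``$\subseteq$,'' let $u\in\ker(H_{\max}-zI)$. To transfer the initial conditions to the endpoint $a$, fix any $x_0\in(a,b)$ and let $\tilde u(z,\cdot,x_0)$ be the unique $\cH$-valued solution of $\tau y=zy$ with $\tilde u(x_0)=u(x_0)$ and $\tilde u'(x_0)=u'(x_0)$ furnished by Theorem~\ref{t2.3}. Since $a$ is regular, $\tilde u\in W^{2,1}_\loc([a,b);dx;\cH)$, and the uniqueness clause of Theorem~\ref{t2.3} forces $u=\tilde u$ on $(a,b)$. The strong limits $u(a):=\tilde u(a)$ and $u'(a):=\tilde u'(a)$ thus exist, and a second application of uniqueness (now with initial data posed at $a$) gives $u(x)=\theta_0(z,x,a)u(a)+\phi_0(z,x,a)u'(a)$ on $[a,b)$, establishing the representation with $f:=u(a)$ and $g:=u'(a)$.

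Closedness is the main technical point. Given a sequence $u_n:=\theta_0(z,\cdot,a)f_n+\phi_0(z,\cdot,a)g_n\to u$ in $L^2((a,b);dx;\cH)$, I apply Lemma~\ref{l2.10} at the regular endpoint $x_0=a$ to the difference $u_n-u_m$, which solves $\tau y=zy$ with initial data $(f_n-f_m,g_n-g_m)$. Choosing $\delta>0$ small enough that $\delta\le C(z,V)$ and $a+\delta<b$ (possible since $b$ is a regular and hence finite endpoint) yields
\begin{equation}
c_0^2\delta^3\,\|(f_n-f_m,g_n-g_m)^\top\|_{\cH\oplus\cH}^2
\le\int_a^{a+\delta}dx\,\|u_n(x)-u_m(x)\|_\cH^2
\le\|u_n-u_m\|_{L^2((a,b);dx;\cH)}^2.
\end{equation}
Hence $(f_n,g_n)$ is Cauchy in $\cH\oplus\cH$; call its limit $(f,g)$. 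By Theorem~\ref{t2.3}\,$(i)$ (continuous dependence on initial data), $u_n(x)\to\theta_0(z,x,a)f+\phi_0(z,x,a)g$ pointwise on $[a,b)$, while a subsequence of $u_n$ converges a.e.\ to $u$; consequently $u=\theta_0(z,\cdot,a)f+\phi_0(z,\cdot,a)g$ as elements of $L^2((a,b);dx;\cH)$, and the set is closed. The main obstacle is precisely this control of the initial data $(f_n,g_n)$ by the $L^2$-norm of $u_n$, exactly what Lemma~\ref{l2.10} provides near the regular endpoint $a$; without such a coercivity estimate the initial data could in principle escape to infinity while $u_n$ still converges in norm.
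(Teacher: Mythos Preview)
Your proof is correct and follows essentially the same approach as the paper's: both establish the two inclusions via the existence and uniqueness theorem, and both use Lemma~\ref{l2.10} as the key coercivity estimate to show that the initial data $(f_n,g_n)$ are Cauchy in $\cH\oplus\cH$. The only minor difference is in identifying the $L^2$-limit: the paper directly estimates $\|u_n-u\|_{L^2}\leq [2(b-a)]^{1/2}\big[C_1(z)\|f_n-f\|_\cH+C_2(z)\|g_n-g\|_\cH\big]$ using the uniform boundedness of $\theta_0,\phi_0$ on the compact interval $[a,b]$ (which is where regularity of \emph{both} endpoints enters), whereas you combine pointwise convergence from Theorem~\ref{t2.3}\,$(i)$ with a.e.\ convergence of a subsequence---both are valid and standard.
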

\begin{proof}
It is clear that the set on the right-hand side of \eqref{3.5A} is contained in
$\ker(\oT_{\max}-z I_{L^2((a,b);dx;\cH)})$. The existence and uniqueness result,
Theorem \ref{t2.3}, also
 establishes the converse inclusion. Thus, we only need to show that
 $\ker(\oT_{\max}-z I_{L^2((a,b);dx;\cH)})$ is a closed subspace of $L^2((a,b);dx;\cH)$ (one recalls that we did not yet establish that $\oT_{\max}$ is a closed operator).

Suppose that $\{u_n\}_{n \in \bbN} \subset \ker(\oT_{\max}-z I_{L^2((a,b);dx;\cH)})$ is a Cauchy sequence with respect to the topology in $L^2((a,b);dx;\cH)$. By Lemma \ref{l2.10} one has for some $\varepsilon > 0$,
\begin{align}
\begin{split}
\|u_n-u_m\|_{L^2((a,b);dx;\cH)}^2 & \geq \int_a^{a+\varepsilon}
dx \, \|u_n(x)-u_m(x)\|_\cH^2 \\
& \geq c_0^2 \varepsilon^3 \|(u_n(a)-u_m(a),u_n'(a)-u_m'(a))\|_{\cH\oplus\cH}^2.
\end{split}
\end{align}
This implies that both $\{u_n(a)\}_{n\in\bbN}$ and $\{u_n'(a)\}_{n\in\bbN}$ are Cauchy sequences in $\cH$ and hence convergent. Denoting the limits by $f$ and $g$, respectively, one concludes that
$u=[\theta_0(z,\cdot,a)f+\phi_0(z,\cdot,a)g] \in \ker(\oT_{\max}-z I_{L^2((a,b);dx;\cH)})$. Since
\begin{equation}
\|u_n-u\|_{L^2((a,b);dx;\cH)}\leq [2(b-a)]^{1/2} \big[C_1(z) \|u_n(a)-f\|_\cH
+ C_2(z) \|u_n'(a)-g\|_\cH\big],
\end{equation}
where
\begin{equation}
C_1(z) = \max_{x\in[a,b]} \|\theta_0(z,x,a)\|_{\cB(\cH)}, \quad
C_2(z) = \max_{x\in[a,b]} \|\phi_0(z,x,a)\|_{\cB(\cH)}, 
\end{equation}
the element $u$ is the strong limit if of the sequence $u_n$ in $L^2((a,b);dx;\cH)$ and hence
$\ker(\oT_{\max}-z I_{L^2((a,b);dx;\cH)})$ is closed.
\end{proof}

\begin{remark} \lb{r3.2}
If $\cH$ is finite-dimensional (e.g., in the scalar case, $\dim(\cH)=1$), then
$\ker(\oT_{\max}-z I_{L^2((a,b);dx;\cH)})$ is finite-dimensional and hence automatically closed.
\end{remark}

\begin{lemma} \label{l3.3}
In addition to Hypothesis \ref{h2.7} suppose that $a$ and $b$ are regular endpoints for $\tau$. Denote by $\oT_0$ the linear operator in $L^2((a,b);dx;\cH)$
defined by the restriction of
$\oT_{\max}$ to the space
\begin{equation}
\dom(\oT_0)=\{g\in\dom(\oT_{\max}) \,|\, g(a)=g(b)=g'(a)=g'(b)=0\}.
\lb{3.9a}
\end{equation}
Then
\begin{equation}
\ker(\oT_{\max})=[\ran(\oT_0)]^\perp,
\end{equation}
that is, the space of solutions $u$ of $\tau u=0$ coincides with the orthogonal complement of the collection of elements $\tau u_0$ satisfying $u_0\in \dom(\oT_0)$.
\end{lemma}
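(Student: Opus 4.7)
The plan is to prove the two opposite inclusions separately.

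For $(\subseteq)$, I fix $u\in\ker(\oT_{\max})$ and $u_0\in\dom(\oT_0)$ and apply Green's formula, Lemma~\ref{l2.9}\,$(i)$, with $f=u_0$ and $g=u$:
\begin{equation*}
(\oT_0 u_0,u)_{L^2((a,b);dx;\cH)} - (u_0,\tau u)_{L^2((a,b);dx;\cH)} = W_*(u_0,u)(b) - W_*(u_0,u)(a).
\end{equation*}
By~\eqref{3.9a} both Wronskian endpoints vanish, and $\tau u=0$, so $(\oT_0 u_0,u)_{L^2((a,b);dx;\cH)}=0$; hence $u\in[\ran(\oT_0)]^\perp$.

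For the reverse inclusion I would in fact prove the sharper statement $\ran(\oT_0)=[\ker(\oT_{\max})]^\perp$. Set $\cS:=\ker(\oT_{\max})$; Lemma~\ref{l3.1} at $z=0$ makes $\cS$ closed in $L^2((a,b);dx;\cH)$, and since $(a,b)$ is bounded every solution of $\tau y=0$ is automatically in $L^2((a,b);dx;\cH)$, giving $\cS=\{\theta_0(0,\cdot,a)h+\phi_0(0,\cdot,a)k:h,k\in\cH\}$. Pairing with these generators in the $L^2$-inner product characterizes $\cS^\perp$ as the set of $\varphi\in L^2((a,b);dx;\cH)$ with
\begin{equation*}
\int_a^b \theta_0(0,x,a)^*\varphi(x)\,dx = 0 = \int_a^b \phi_0(0,x,a)^*\varphi(x)\,dx \quad \text{in } \cH.
\end{equation*}

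For $\varphi\in\cS^\perp$, the variation-of-constants formula, Lemma~\ref{l2.8} (at $\alpha=0$, $z=0$, $x_0=a$, zero initial data), produces
\begin{equation*}
u_\varphi(x) := \theta_0(0,x,a)\int_a^x \phi_0(0,x',a)^*\varphi(x')\,dx' - \phi_0(0,x,a)\int_a^x \theta_0(0,x',a)^*\varphi(x')\,dx',
\end{equation*}
with $\tau u_\varphi=\varphi$ and, from~\eqref{2.5} at $\alpha=0$, $u_\varphi(a)=u_\varphi'(a)=0$. Differentiating $u_\varphi$ and using~\eqref{2.7j} at $z=\bar z=0$, i.e.\ $\theta_0(0,x,a)\phi_0(0,x,a)^*=\phi_0(0,x,a)\theta_0(0,x,a)^*$, the cross terms $(\theta_0\phi_0^*-\phi_0\theta_0^*)\varphi(x)$ cancel, leaving
\begin{equation*}
u_\varphi'(x) = \theta_0'(0,x,a)\int_a^x \phi_0(0,x',a)^*\varphi(x')\,dx' - \phi_0'(0,x,a)\int_a^x \theta_0(0,x',a)^*\varphi(x')\,dx'.
\end{equation*}
Evaluating at $x=b$ and invoking the two $\cS^\perp$ conditions forces $u_\varphi(b)=u_\varphi'(b)=0$. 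Since $u_\varphi$ is strongly continuous on the compact interval $[a,b]$, it lies in $L^2((a,b);dx;\cH)$, and $\tau u_\varphi=\varphi\in L^2((a,b);dx;\cH)$; hence $u_\varphi\in\dom(\oT_0)$ with $\oT_0 u_\varphi=\varphi$. Thus $\cS^\perp\subseteq\ran(\oT_0)$, and combining with the forward direction gives $\ran(\oT_0)=\cS^\perp$, which is closed, so $[\ran(\oT_0)]^\perp=(\cS^\perp)^\perp=\cS=\ker(\oT_{\max})$.

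The main hurdle is noticing that the two boundary conditions at $b$ that the raw variation-of-constants solution generally fails to satisfy correspond precisely to the two $\cH$-valued integral conditions defining $\cS^\perp$; once this correspondence is spotted, the Wronskian identity~\eqref{2.7j} performs the necessary cancellation and the remainder of the proof is direct verification.
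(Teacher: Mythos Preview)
Your proof is correct and follows the same overall architecture as the paper: both directions establish $\ran(\oT_0)=[\ker(\oT_{\max})]^\perp$ and then invoke the closedness of $\ker(\oT_{\max})$ from Lemma~\ref{l3.1} to pass to the orthogonal complement. The only difference is in how the inclusion $[\ker(\oT_{\max})]^\perp\subseteq\ran(\oT_0)$ is executed: you write out the variation-of-constants solution with zero data at $a$ and verify the boundary conditions at $b$ directly from the integral characterization of $\cS^\perp$ together with the Wronskian identity~\eqref{2.7j}, whereas the paper solves the initial value problem with zero data at $b$ and then uses Green's formula once more, exploiting the freedom in $u_1(a),u_1'(a)$ for $u_1\in\ker(\oT_{\max})$, to force $u_0(a)=u_0'(a)=0$. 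The paper's route is marginally slicker in that it avoids the explicit formula and the appeal to~\eqref{2.7j}, but your constructive argument is equally valid.
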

\begin{proof}
Suppose $u\in\ker(\oT_{\max})$ and $u_0\in \dom(\oT_0)$. Let $f_0=\oT_0u_0$. Then Green's formula \eqref{2.52A} yields $(f_0,u)_{L^2((a,b);dx;\cH)}=0$ so that
$\ran(\oT_0)\subseteq [\ker(\oT_{\max})]^\perp$.

Next, assume that $f_0\in [\ker(\oT_{\max})]^\perp$. Since $f_0$ is integrable, there is a solution $u_0$ of the initial value problem $\tau u_0=f_0$, $u_0(b)=u_0'(b)=0$.
If $u_1\in \ker(\oT_{\max})$, one has
\begin{equation}
0=(f_0,u_1)_{L^2((a,b);dx;\cH)}=-(u_0(a),u_1'(a))_\cH+(u_0'(a),u_1(a))_\cH,
\end{equation}
using Green's formula \eqref{2.52A} once more. Since one can choose $u_1$ so that
$u_1'(a)=0$ and $u_1(a)$ is an arbitrary vector in $\cH$, one necessarily concludes that  $u_0'(a) = 0$. Similarly, choosing $u_1(a)=0$ and $u_1'(a)$ arbitrarily shows that $u_0(a)=0$. Hence $u_0\in \dom(\oT_0)$ and $f_0\in \ran(\oT_0)$.

We have now shown that $\ran(\oT_0) = [\ker(\oT_{\max})]^\perp$. Taking orthogonal complements and recalling from Lemma \ref{l3.1} that $\ker(\oT_{\max})$ is closed, concludes the proof of Lemma \ref{l3.3}.
\end{proof}

\begin{theorem} \label {t3.4}
Assume Hypothesis \ref{h2.7}. Then the operator $\dot \oT_{\min}$ is densely defined. Moreover, $\oT_{\max}$ is the adjoint of $\dot \oT_{\min}$,
\begin{equation}
\oT_{\max} = (\dot \oT_{\min})^*.   \lb{3.12a}
\end{equation}
In particular, $\oT_{\max}$ is closed. In addition, $\dot \oT_{\min}$ is symmetric and
$\oT_{\max}^*$ is the closure of $\dot \oT_{\min}$, that is,
\begin{equation}
\oT_{\max}^* = \ol{\dot \oT_{\min}} = \oT_{\min}.    \lb{3.13a}
\end{equation}
\end{theorem}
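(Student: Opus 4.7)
The plan is to establish the four assertions in the order: density of $\dot\oT_{\min}$, then symmetry together with the easy inclusion $\oT_{\max}\subseteq(\dot\oT_{\min})^*$, then the reverse inclusion $(\dot\oT_{\min})^*\subseteq\oT_{\max}$ (which is the main obstacle), and finally the remaining formal closure identities.

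For density, I would exhibit a concrete family of elements of $\dom(\dot\oT_{\min})$. Fix $x_0\in(a,b)$ and let $\psi_1=\theta_0(0,\cdot,x_0)$, $\psi_2=\phi_0(0,\cdot,x_0)$ be the $\cB(\cH)$-valued solutions of $\tau Y=0$ supplied by Corollary~\ref{c2.5}. For any scalar cutoff $\chi\in C_c^\infty((a,b))$ and any $h\in\cH$, the function $g:=\chi\psi_j h$ has compact support in $(a,b)$, and using $\psi_j''=V\psi_j$ a direct computation gives $\tau g=-\chi''\psi_j h-2\chi'\psi_j' h$, which is strongly continuous and compactly supported, hence in $L^2((a,b);dx;\cH)$; thus $g\in\dom(\dot\oT_{\min})$. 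If $u\in L^2((a,b);dx;\cH)$ is orthogonal to every such $g$, then varying $\chi$ together with a countable dense subset of $\cH$ (available by separability) forces $\theta_0(0,x,x_0)^* u(x)=\phi_0(0,x,x_0)^* u(x)=0$ for a.e.\ $x\in(a,b)$. The invertibility of $\Theta_0(0,x,x_0)$ established in \eqref{2.7j}--\eqref{2.7m} yields $\ran(\theta_0(0,x,x_0))+\ran(\phi_0(0,x,x_0))=\cH$, so the joint kernel of the two adjoints is trivial, forcing $u=0$ a.e.

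Symmetry of $\dot\oT_{\min}$ and the easy inclusion $\oT_{\max}\subseteq(\dot\oT_{\min})^*$ are both immediate from Green's formula (Lemma~\ref{l2.9}\,(i)): for $v\in\dom(\dot\oT_{\min})$ with $\supp(v)\subset[x_1,x_2]\subset(a,b)$ and $u\in\dom(\oT_{\max})$, the Wronskian boundary terms in \eqref{2.52A} vanish at $x_1$ and $x_2$, producing $(\tau v,u)_{L^2}=(v,\tau u)_{L^2}$. The reverse inclusion $(\dot\oT_{\min})^*\subseteq\oT_{\max}$ is the principal obstacle, because an abstract element of $\dom((\dot\oT_{\min})^*)$ carries no a priori local regularity. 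To overcome this I plan to localize. Fix $u\in\dom((\dot\oT_{\min})^*)$ with $(\dot\oT_{\min})^* u=w\in L^2$ and a compact subinterval $[c,d]\subset(a,b)$; because $\|V(\cdot)\|_{\cB(\cH)}\in L^1([c,d];dx)$, both $c$ and $d$ are regular endpoints for $\tau$ on $(c,d)$, so the version of Lemma~\ref{l3.3} for the interval $(c,d)$ applies. Use Theorem~\ref{t2.3} to produce an $\cH$-valued solution $\tilde u\in W^{2,1}_{\loc}((c,d);dx;\cH)$ of $\tau\tilde u=w$ on $(c,d)$. For any $v_0$ in the analogue of $\dom(\oT_0)$ on $(c,d)$ (so that $v_0,v_0'$ vanish at $c$ and $d$), its extension by zero $\tilde v_0$ lies in $\dom(\dot\oT_{\min})$, and combining the defining adjoint identity $(\dot\oT_{\min}\tilde v_0,u)_{L^2}=(\tilde v_0,w)_{L^2}$ with Green's formula on $[c,d]$ applied to the pair $(v_0,\tilde u)$ (boundary terms again vanishing) yields $\int_c^d(\tau v_0,u-\tilde u)_\cH\,dx=0$ for every such $v_0$. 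Thus the restriction of $u-\tilde u$ to $(c,d)$ lies in the orthogonal complement of the range of the local $\oT_0$, and Lemma~\ref{l3.3} places it in the kernel of the corresponding maximal operator; in particular $u-\tilde u\in W^{2,1}_{\loc}((c,d))$ with $\tau(u-\tilde u)=0$, whence $u\in W^{2,1}_{\loc}((c,d))$ with $\tau u=w$ a.e.\ on $(c,d)$. Exhausting $(a,b)$ by such compact intervals delivers $u\in\dom(\oT_{\max})$ with $\oT_{\max} u=w$.

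The remaining claims are formal: $\oT_{\max}=(\dot\oT_{\min})^*$ is closed as the adjoint of a (densely defined) operator, and since $\dot\oT_{\min}$ is densely defined and symmetric it is closable, with $\ol{\dot\oT_{\min}}=(\dot\oT_{\min})^{**}=\oT_{\max}^*$, which equals $\oT_{\min}$ by definition.
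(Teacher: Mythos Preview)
Your proof is correct, and the argument for the reverse inclusion $(\dot\oT_{\min})^*\subseteq\oT_{\max}$ is essentially identical to the paper's: both localize to a compact subinterval, compare $u$ to a particular solution of $\tau\tilde u=w$, and invoke Lemma~\ref{l3.3} to identify the difference as an element of the local maximal kernel. In fact your version is a bit cleaner, since you test directly against $v_0$ in the local domain $\dom(\oT_0)$ on $(c,d)$ (whose zero extension genuinely lies in $\dom(\dot\oT_{\min})$), which is exactly what Lemma~\ref{l3.3} calls for; the paper phrases this step in terms of the local $\wti{\dot\oT_{\min}}$.

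Where you genuinely diverge from the paper is in the density argument. The paper proves density by the same localization device: given $f_1\perp\dom(\dot\oT_{\min})$ it solves $\tau u_1=f_1$, then uses Lemma~\ref{l3.3} on a compact subinterval to force $u_1$ into the local kernel of $\oT_{\max}$, whence $f_1=\tau u_1=0$ there. Your route is more constructive: you exhibit a concrete family $\chi\psi_j h\in\dom(\dot\oT_{\min})$ built from smooth cutoffs times the fundamental solutions, and you deduce $u=0$ from the pointwise identity \eqref{2.7l} (equivalently, the invertibility of $\Theta_0$). This avoids Lemma~\ref{l3.3} entirely for the density step and yields, as a byproduct, an explicit spanning set inside $\dom(\dot\oT_{\min})$; the paper's approach has the complementary virtue of treating density and the hard inclusion by a single mechanism.
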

\begin{proof}
Suppose $f_1$ is perpendicular to $\dom(\dot \oT_{\min})$ and let $u_1$ be a solution
of $\tau u_1=f_1$. Let $[\tilde a, \tilde b]$ be a compact interval contained in $(a,b)$ and introduce the operators $\wti {\oT_{\max}}$ and $\wti {\dot \oT_{\min}}$ associated
with that interval and acting in the Hilbert space
$\wti{L^2((a,b);dx;\cH)} =L^2\big(\big(\tilde a,\tilde b\big);dx;\cH\big)$ with inner product $(\cdot,\cdot)_{\wti{L^2((a,b);dx;\cH)}}$. We extend any function
$u_0\in \dom\Big(\wti {\dot \oT_{\min}}\Big)$ by zero outside the interval $[\tilde a, \tilde b]$ to get an element of $\dom(\dot \oT_{\min})$, also denoted by $u_0$. Similarly, we consider the restriction of $f_1$ to $[\tilde a, \tilde b]$, and for simplicity, also denote it by $f_1$. Thus, setting $f_0=\tau u_0$, we get via Green's formula \eqref{2.52A}
\begin{equation}
0=(u_0,f_1)_{L^2((a,b);dx;\cH)}=(u_0,f_1)_{\wti{L^2((a,b);dx;\cH)}}
=(f_0,u_1)_{\wti{L^2((a,b);dx;\cH)}}.
\end{equation}
Lemma \ref{l3.3} then implies that $u_1\in\ker\Big(\wti {\oT_{\max}}\Big)$ and
hence that $f_1$ is zero almost everywhere in $[\tilde a, \tilde b]$. Since we may
choose $\tilde a$ arbitrarily close to $a$, and $\tilde b$ arbitrarily close to $b$, we get $f_1=0$ a.e., proving that $\dot \oT_{\min}$ is densely defined.

To show that $\oT_{\max}$ is the adjoint of $\dot \oT_{\min}$ (and hence a closed operator), we first recall that the domain of $(\dot \oT_{\min})^*$ is given by
\begin{align}
& \dom\big(\big(\dot \oT_{\min}\big)^*\big)=\{u\in L^2((a,b);dx;\cH) \,|\, \text{there exists 
$u^*\in L^2((a,b);dx;\cH)$, such}  \no \\ 
& \hspace*{.7cm} \text{that for all } u_0\in \dom(\dot \oT_{\min}),   
 (\dot \oT_{\min}u_0,u)_{L^2((a,b);dx;\cH)}
=(u_0,u^*)_{L^2((a,b);dx;\cH)}\}.  
\end{align}
The inclusion $\dom(\oT_{\max})\subseteq \dom(\big(\dot \oT_{\min})^*)$ then follows immediately from Green's formula \eqref{2.52A} because we can choose $u^*$ to
be $\tau u$ whenever $u\in\dom(\oT_{\max})$.

For proving the reverse inclusion, let $u\in \dom((\dot \oT_{\min})^*)$, note that
$u^*=(\dot \oT_{\min})^*u$ is locally integrable, and let $h$ be a solution of the differential equation $\tau h=u^*$. As a consequence of Green's formula \eqref{2.52A} one
obtains that
\begin{align}
\begin{split}
\int_a^b dx \, (\tau v,u-h)_\cH &=(\dot \oT_{\min}v,u)_{L^2((a,b);dx;\cH)}
- \int_a^b dx \, (\tau v,h)_\cH    \\
&=(v,u^*)_{L^2((a,b);dx;\cH)}-\int_a^b dx \, (v,\tau h)_\cH = 0,
\end{split}
\end{align}
whenever $v\in \dom(\dot \oT_{\min})$.  Thus, the restriction of $u-h$ to any
interval $[\tilde a, \tilde b]\supseteq \supp(v)$ is orthogonal to
$\ran\Big(\wti {\dot \oT_{\min}}\Big)$ and hence lies in
$\ker\Big(\wti {\oT_{\max}}\Big)$.
This shows that $u$ and $u'$ are locally absolutely continuous and that 
$\tau u=u^*\in L^2((a,b);dx;\cH)$, that is, $u\in\dom(\oT_{\max})$.

Since
\begin{equation}
\dot \oT_{\min} \subseteq \oT_{\max} = (\dot \oT_{\min})^*,
\end{equation}
$\dot \oT_{\min}$ is symmetric in $L^2((a,b);dx;\cH)$. Hence $\oT_{\max}^*$ is a
restriction of $\oT_{\max}$ and thus an extension of $\dot \oT_{\min}$. Finally, \eqref{3.13a} is an immediate consequence of \eqref{3.12a}.
\end{proof}

Lemmas \ref{l3.1}, \ref{l3.3}, and Theorem \ref{t3.4}, under additional hypotheses on $V$ (typically involving continuity assumptions) are of course well-known and go back to 
Rofe-Beketov \cite{Ro69}, \cite{Ro69a} (see also \cite[Sect.\ 3.4]{GG91}, \cite[Ch.\ 5]{RK05}). 
 
\begin{remark} \lb{3.5}
In the special case where $a$ and $b$ are regular endpoints for $\tau$, the operator $H_0$ introduced in \eqref{3.9a} coincides with the minimal
operator $\oT_{\min}$.
\end{remark}

Using the dominated convergence theorem and Green's formula \eqref{2.52A} one can show that $\lim_{x\to a}W_*(u,v)(x)$ and $\lim_{x\to b}W_*(u,v)(x)$ both exist whenever
$u,v\in\dom(\oT_{\max})$. We will denote these limits by $W_*(u,v)(a)$ and $W_*(u,v)(b)$, respectively. Thus Green's formula also holds for $x_1=a$ and $x_2=b$ if $u$ and $v$ are in $\dom(\oT_{\max})$, that is,
\begin{equation}\label{3.17A}
(\oT_{\max}u,v)_{L^2((a,b);dx;\cH)}-(u,\oT_{\max}v)_{L^2((a,b);dx;\cH)}
= W_*(u,v)(b) - W_*(u,v)(a).
\end{equation}
This relation and the fact that $\oT_{\min}=\oT_{\max}^*$ is a restriction of $\oT_{\max}$ show that
\begin{align}
\begin{split}
& \dom(\oT_{\min})=\{u\in\dom(\oT_{\max}) \,|\, W_*(u,v)(b)=W_*(u,v)(a)=0   \\
& \hspace*{6.2cm} \text{ for all } v\in \dom(\oT_{\max})\}.     \label{3.18A}
\end{split}
\end{align}

\begin{definition} \lb{d3.6}
Assume Hypothesis \ref{h2.7}.
Then the endpoint $a$ (resp., $b$) is said to be of {\it limit-point type for $\tau$} if
$W_*(u,v)(a)=0$ (resp., $W_*(u,v)(b)=0$) for all $u,v\in\dom(\oT_{\max})$.
\end{definition}

By using the term ``limit-point type'' one recognizes Weyl's contribution to the
subject in his celebrated paper \cite{We10}.

Next, we introduce the subspaces
\begin{equation}
\cD_{z}=\{u\in\dom(\oT_{\max}) \,|\, \oT_{\max}u=z u\}, \quad z \in \bbC.
\end{equation}
For $z\in\bbC\backslash\bbR$, $\cD_{z}$ represent the deficiency subspaces of
$\oT_{\min}$. Von Neumann's theory of extensions of symmetric operators implies that
\begin{equation} \label{3.20A}
\dom(\oT_{\max})=\dom(\oT_{\min}) \dotplus \cD_i \dotplus \cD_{-i}
\end{equation}
where $\dotplus$ indicates the direct (but not necessarily orthogonal direct) sum.

\begin{lemma} \label{l3.7}
Assume Hypothesis \ref{h2.7}. Suppose $a$ is a regular endpoint for $\tau$,
let $f_1 \in \cH$, $f_2\in \cH$. Then there are elements
$u\in\dom(\oT_{\max})$ such that $u(a)=f_1$, $u'(a)=f_2$, and $u$ vanishes on $[c,b)$ for some $c \in (a,b)$. The analogous statements hold with the roles of $a$ and $b$ interchanged.
\end{lemma}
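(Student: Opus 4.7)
The plan is to construct $u$ as a smooth cutoff of a solution of $\tau v = 0$ with the prescribed initial data at $a$. Since $a$ is a regular endpoint, Theorem \ref{t2.3} (as noted in the remark following Definition \ref{d2.6}) produces a unique $\cH$-valued solution $v \in W^{2,1}_{\loc}([a,b);dx;\cH)$ of
\begin{equation*}
\tau v = 0, \quad v(a) = f_1, \; v'(a) = f_2,
\end{equation*}
with $v$ strongly continuous and $v'$ strongly continuous on $[a,b)$.

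Next I would pick points $a < c_0 < c < b$ and a scalar cutoff $\chi \in C^\infty(\bbR; [0,1])$ with $\chi \equiv 1$ on $(-\infty, c_0]$ and $\chi \equiv 0$ on $[c, \infty)$, and define
\begin{equation*}
u(x) = \chi(x) v(x), \quad x \in (a,b).
\end{equation*}
The initial conditions are then immediate: since $\chi \equiv 1$ near $a$, one has $u(a) = v(a) = f_1$ and $u'(a) = v'(a) = f_2$. Moreover, $u$ vanishes identically on $[c,b)$ by the support property of $\chi$.

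The main step is to verify that $u \in \dom(\oT_{\max})$. Since $\chi$ is smooth and $v \in W^{2,1}_{\loc}([a,b);dx;\cH)$, the product rule shows $u \in W^{2,1}_{\loc}((a,b);dx;\cH)$, and a.e.\ on $(a,b)$ one computes
\begin{equation*}
u'' = \chi'' v + 2 \chi' v' + \chi v''.
\end{equation*}
Using $\tau v = -v'' + V v = 0$ a.e., this yields the cancellation
\begin{equation*}
\tau u = -u'' + V u = -\chi'' v - 2\chi' v' + \chi(-v'' + V v) = -\chi''\, v - 2\chi'\, v' \quad \text{a.e.\ on } (a,b).
\end{equation*}
Since $\chi'$ and $\chi''$ are bounded with support in $[c_0, c] \subset (a,b)$, and $v$, $v'$ are strongly continuous on the compact interval $[c_0, c]$, the function $\tau u$ is bounded in $\cH$-norm and compactly supported in $(a,b)$, hence belongs to $L^2((a,b);dx;\cH)$. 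The analogous statement with $a$ and $b$ interchanged is obtained by the symmetric construction using a cutoff supported near $b$.

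No serious obstacle arises; the only nontrivial point is the cancellation above, which ensures that $\tau u$ does not contain the (potentially merely $L^1_{\loc}$) term $\chi\, V v$, since it is absorbed via $\chi v'' = \chi V v$. This is precisely what makes the construction compatible with the weak local integrability hypothesis on $V$.
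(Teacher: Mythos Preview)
Your proof is correct, and it is genuinely simpler than the paper's argument. The paper does not use a cutoff of a homogeneous solution; instead, it constructs $u$ by solving an \emph{inhomogeneous} problem $\tau u = h$ on $[a,c]$ with zero Cauchy data at $c$, where $h$ is a linear combination $[\theta_0(0,\cdot,a)g_1+\phi_0(0,\cdot,a)g_2]\chi_{[a,c]}$. Green's formula then expresses $(u'(a),-u(a))$ as $A_c(g_1,g_2)^\top$ for a Gram-type operator $A_c$, and the authors invoke Lemma~\ref{l2.10} to show $A_c$ is strictly positive definite (hence boundedly invertible) for $c$ close to $a$, after which one solves for $(g_1,g_2)$. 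Your cutoff construction bypasses Lemma~\ref{l2.10} and the invertibility discussion entirely, and the key cancellation $\chi(-v''+Vv)=0$ is exactly what keeps $\tau u$ bounded despite $V$ being merely $L^1_{\loc}$. One small omission: you should also note explicitly that $u$ itself lies in $L^2((a,b);dx;\cH)$, which is immediate since $u$ is continuous on $[a,c]$ and vanishes on $[c,b)$. The paper's route, while heavier here, has the advantage that the operator $A_c$ and the lower bound of Lemma~\ref{l2.10} reappear in later arguments (closedness of $\ker(H_{\max}-z)$, invertibility of $C_{1,\alpha}(c,z)$), so some of its machinery is being reused.
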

\begin{proof}
Let $h=[\theta_0(0,\cdot,a)g_1+\phi_0(0,\cdot,a)g_2]\chi_{[a,c]}$, where
$g_1\in \cH$, $g_2 \in \cH$, and $c\in(a,b)$ are as yet undetermined.
Then $h\in L^2((a,b);dx;\cH)$. Solving the initial value problem $\tau u=h$,
$u(c)=u'(c)=0$, implies that $u\in\dom (\oT_{\max})$ and that $u$ is zero
on $[c,b)$. Moreover, Green's formula \eqref{2.52B} shows that
\begin{equation}
\int_a^c dx' \, \theta_0(0,x',a)^*h(x') = \int_a^c dx' \, \theta_0(0,x',a)^*(-u''+Vu) = u'(a)
\end{equation}
and
\begin{equation}
\int_a^c dx' \, \phi_0(0,x',a)^*h(x') = \int_a^c dx' \, \phi_0(0,x',a)^*(-u''+Vu) = -u(a).
\end{equation}
We want to choose $g_1$ and $g_2$ so that $u(a)=f_1$ and $u'(a)=f_2$, that is,
$A_c(g_1,g_2)^\top=(f_2,-f_1)^\top$, where $A_c:\cH\oplus\cH\to \cH\oplus\cH$ is given by
\begin{equation}
A_c=\begin{pmatrix} \int_a^c dx' \, \theta_0(0,x',a)^*\theta_0(0,x',a)
&\int_a^c dx' \, \theta_0(0,x',a)^*\phi_0(0,x',a)    \\[2mm]
 \int_a^c dx' \, \phi_0(0,x',a)^*\theta_0(0,x',a)
 &\int_a^c dx' \, \phi_0(0,x',a)^*\phi_0(0,x',a)
 \end{pmatrix}.
\end{equation}
Hence the proof will be complete if we can show that $A_c$
is invertible for a proper choice of $c$. Let $F=(g_1,g_2)^\top\in\cH\oplus\cH$. Since
\begin{equation}
(F,A_c F)_{\cH\oplus \cH}
= \int_a^c dx' \, \|\theta_0(0,x',a)g_1 + \phi_0(0,x',a)g_2\|_\cH^2,
\end{equation}
and since
$\theta_0(0,x',a)g_1 + \phi_0(0,x',a)g_2 =0$ only if $g_1=g_2=0$, it follows that $A_c$ is positive definite and hence injective. To show that $A_c$ is also surjective we will prove
that $(F,A_c F)_{\cH \oplus \cH} \geq \gamma \|F\|_\cH^2$ for some constant
$\gamma > 0$ since this implies that zero cannot be in the approximate point spectrum of $A_c$ (we recall that the spectrum and approximate point spectrum coincide for self-adjoint operators and refer for additional comments to the paragraph preceding Lemma \ref{l3.12}).

By Lemma \ref{l2.10},
\begin{equation}
(F,A_c F)_{\cH \oplus \cH}=\int_a^c dx' \, \|\theta_0(0,x',a)g_1
+ \phi_0(0,x',a)g_2\|_\cH^2
\geq c_0^2 (c-a)^3 \|F\|_{\cH\oplus\cH}^2
\end{equation}
provided $c-a$ is sufficiently small. Thus, $\gamma$ can be chosen
as $c_0^2 (c-a)^3$.
\end{proof}

We now set out to determine the self-adjoint restrictions of $\oT_{\max}$ assuming
that $a$ is a regular endpoint for $\tau$ and $b$ is of limit-point type for $\tau$. To this
end we first briefly  recall the concept
of a Hermitian relation. For more information the reader may consult, for instance,
\cite[Appendix\ A]{RK05}.

A subset $\cM$ of $\cH\oplus\cH$ is called a {\it Hermitian relation} in the Hilbert space
$\cH$ if it has the following two properties:
\begin{enumerate}
  \item If $(f_1,f_2)$ and $(g_1,g_2)$ are in $\cM$, then $(f_1,g_2)_\cH=(f_2,g_1)_\cH$.
  \item If $(f_1,f_2)\in\cH\oplus\cH$ and $(f_1,g_2)_\cH=(f_2,g_1)_\cH$ for all
  $(g_1,g_2)\in \cM$, then $(f_1,f_2)\in \cM$.
\end{enumerate}

Thus, a Hermitian relation is a linear subspace of $\cH\oplus\cH$ and one can show
that $\cM=\wti \cM$ if $\cM$ and $\wti \cM$ are Hermitian relations such that
$\cM\subseteq \wti \cM$. Moreover, the following lemma holds:

\begin{lemma} \lb{l3.8}
The maps $\pi_\pm: \cM\to\cH: (f_1,f_2)\mapsto f_\pm=f_2 \pm i f_1$ are linear
bijections and $U=\pi_-\circ \pi_+^{-1}:\cH\to\cH$ is unitary\footnote{We note that $U$ is called the Cayley transform of $\cM$.}.
\end{lemma}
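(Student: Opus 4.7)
The plan is to establish, in order, linearity and injectivity of $\pi_\pm$, the isometry property that pins down a candidate $U$ on its natural domain, and finally surjectivity of $\pi_\pm$ onto all of $\cH$. Linearity is immediate from the definition. For injectivity of $\pi_+$, suppose $(f_1,f_2)\in\cM$ satisfies $f_2+if_1=0$; applying property~(1) with $(g_1,g_2)=(f_1,f_2)$ gives $(f_1,f_2)_\cH=(f_2,f_1)_\cH$, and substituting $f_2=-if_1$ reduces this, via conjugate linearity in the first slot, to $-i\|f_1\|_\cH^2=i\|f_1\|_\cH^2$. Hence $f_1=0$ and thus $f_2=0$; the argument for $\pi_-$ is identical with the sign of $i$ reversed.

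Next I would expand $(f_+,g_+)_\cH$ and $(f_-,g_-)_\cH$ for arbitrary $(f_1,f_2),(g_1,g_2)\in\cM$. In each case, the mixed terms take the form $\pm i\bigl[(f_2,g_1)_\cH-(f_1,g_2)_\cH\bigr]$, which vanishes by property~(1). Both inner products therefore collapse to $(f_1,g_1)_\cH+(f_2,g_2)_\cH$, and in particular $\|f_+\|_\cH=\|f_-\|_\cH$. This shows that the assignment $Uf_+:=f_-$ is a well-defined linear isometry from $\pi_+(\cM)$ onto $\pi_-(\cM)$.

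The main obstacle is surjectivity: upgrading $\pi_\pm(\cM)$ to all of $\cH$. I would first establish that $\cM$ itself is closed in $\cH\oplus\cH$: a convergent sequence $(f_1^n,f_2^n)\to(f_1,f_2)$ in $\cM$ satisfies $(f_1^n,g_2)_\cH=(f_2^n,g_1)_\cH$ for every $(g_1,g_2)\in\cM$, and passing to the limit and invoking property~(2) yields $(f_1,f_2)\in\cM$. Closedness of $\pi_+(\cM)$ in $\cH$ then follows from the isometry: if $h_n=\pi_+((f_1^n,f_2^n))\to h$, then $Uh_n=\pi_-((f_1^n,f_2^n))$ is Cauchy with some limit $h'$, forcing $f_1^n\to(h-h')/(2i)$ and $f_2^n\to(h+h')/2$, and the limit pair lies in $\cM$ with $\pi_+$-image $h$.

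To show $\pi_+(\cM)^\perp=\{0\}$, suppose $h\in\cH$ is orthogonal to every $f_2+if_1$ with $(f_1,f_2)\in\cM$. Splitting the inner product and using conjugate linearity rewrites this orthogonality as $(h,f_2)_\cH=(ih,f_1)_\cH$ for all $(f_1,f_2)\in\cM$, which is precisely the hypothesis of property~(2) applied to the pair $(h,ih)\in\cH\oplus\cH$. Hence $(h,ih)\in\cM$, so $2ih=\pi_+((h,ih))$ lies in $\pi_+(\cM)$; but $h\perp\pi_+(\cM)$ forces $0=(h,2ih)_\cH=2i\|h\|_\cH^2$, giving $h=0$. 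Thus $\pi_+(\cM)=\cH$, and the parallel argument handles $\pi_-(\cM)=\cH$. The map $U$ is then an isometric bijection of $\cH$, hence unitary.
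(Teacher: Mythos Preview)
Your proof is correct and follows essentially the same approach as the paper's. Both arguments hinge on the same two ingredients: the norm/inner product identity $\|f_\pm\|_\cH^2=\|f_1\|_\cH^2+\|f_2\|_\cH^2$ (yours stated for inner products, the paper's for norms) coming from property~(1), and the observation that any $h$ orthogonal to $\ran(\pi_+)$ forces $(h,ih)\in\cM$ via property~(2). The only cosmetic difference is that you separate closedness of $\pi_+(\cM)$ and density into two distinct steps, whereas the paper merges them by first proving density and then lifting a convergent sequence in $\ran(\pi_+)$ back to a convergent sequence in $\cM$ using the norm identity directly; also, the paper concludes $g=0$ from $\pi_-((g,ig))=0$ combined with the norm identity, while you use $\pi_+((h,ih))=2ih$ together with $h\perp\ran(\pi_+)$---the same trick applied to the other map.
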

\begin{proof}
It is clear that $\pi_\pm$ are linear. If $(f_1,f_2)\in \cM$, a straightforward calculation yields
\begin{equation}\label{3.26A}
\|f_\pm\|_\cH^2=\|f_1\|_\cH^2+\|f_2\|_\cH^2
\end{equation}
and so proves injectivity of $\pi_\pm$ and that $U$ is a partial isometry. The proof will be finished when we show that $\pi_\pm$ are also surjective.

We begin by showing that the range of $\pi_+$ is dense in $\cH$. To do so assume
that $g\in\cH$ is orthogonal to $f_2+i f_1$, that is,
$0=(g,f_2 + i f_1)_\cH=(g,f_2)_\cH - (ig,f_1)_\cH$ for all $(f_1,f_2)\in \cM$. This implies
that $(g,i g)\in \cM$. Then $\pi_-(g,ig)=0$ and, using \eqref{3.26A},
we have $g=0$. Now let $f_+ \in \cH$. Then there is a sequence $(f_{1,n},f_{2,n})\in \cM$, $n\in\bbN$, such that $f_{2,n} + i f_{1,n}$ converges to $f_+$. Thus $f_{2,n} + i f_{1,n}$
is Cauchy in $\cH$ and \eqref{3.26A} entails that $f_{1,n}$ and $f_{2,n}$,
$n\in\bbN$, are separately Cauchy and hence convergent in $\cH$. Denote the limit
of $(f_{1,n},f_{2,n})$ as $n \to\infty$ by $(f_1,f_2)$. In view of the continuity of scalar products one finds that
\begin{equation}
(f_1,g_2)_\cH=\lim_{n\to\infty}(f_{1,n},g_2)_\cH=\lim_{n\to\infty}(f_{2,n},g_1)_\cH
=(f_2,g_1)_\cH, \quad (g_1,g_2)\in \cM.
\end{equation}
This implies that $(f_1,f_2)\in \cM$ and $f_+=f_2 + i f_1 \in\ran(\pi_+)$. Surjectivity
of $\pi_-$ is shown in the same manner.
\end{proof}

Next, suppose that $\alpha$ is a (bounded or unbounded) self-adjoint operator in $\cH$. Then
\begin{equation}
\cM_\alpha=\{(f_1,f_2)\in\cH\oplus\cH \,|\, \sin(\alpha)f_2 + \cos(\alpha)f_1 = 0\}
\end{equation}
is a Hermitian relation. This follows since $\sin(\alpha)f_2 + \cos(\alpha)f_1= 0$ if and
only if there is an $h\in\cH$ such that $f_1 = - \sin(\alpha)h$ and $f_2 = \cos(\alpha)h$.
In fact, $h = \cos(\alpha)f_2 - \sin(\alpha)f_1$, if $(f_1,f_2)\in \cM_\alpha$ is given.

We now use the theory of Hermitian relations to characterize all self-adjoint restrictions
of $\oT_{\max}$ under the following set of assumptions:

\begin{hypothesis} \lb{h3.9}
In addition to Hypothesis \ref{h2.7} suppose that $a$ is a regular
endpoint for $\tau$ and $b$ is of limit-point type for $\tau$.
\end{hypothesis}

\begin{theorem} \lb{t3.10}
Assume Hypothesis \ref{h3.9}. If $\oT$ is a self-adjoint restriction of
$\oT_{\max}$, then there is a bounded and self-adjoint operator $\alpha\in\cB(\cH)$
such that
\begin{equation}
\dom(\oT)=\{u\in\dom(\oT_{\max}) \,|\, \sin(\alpha)u'(a) + \cos(\alpha)u(a)=0\}.  \lb{3.29A}
\end{equation}
Conversely, for every $\alpha \in \cB(\cH)$, \eqref{3.29A} gives rise to a self-adjoint restriction of $\oT_{\max}$ in $L^2((a,b);dx;\cH)$.
\end{theorem}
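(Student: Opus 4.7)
\textbf{Proof plan for Theorem \ref{t3.10}.}

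Under Hypothesis \ref{h3.9}, the limit-point condition at $b$ together with Green's formula \eqref{3.17A} yields
\begin{equation*}
(\oT_{\max}u,v)_{L^2((a,b);dx;\cH)} - (u,\oT_{\max}v)_{L^2((a,b);dx;\cH)} = -W_*(u,v)(a)
\end{equation*}
for all $u,v \in \dom(\oT_{\max})$, and both implications of the theorem reduce through this identity to the abstract theory of Hermitian relations in $\cH \oplus \cH$ supplied by Lemma \ref{l3.8}.

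For the forward direction, given a self-adjoint restriction $\oT = \oT^*$ of $\oT_{\max}$, I would form the boundary relation $\cM := \{(u(a),u'(a)) \,|\, u \in \dom(\oT)\}$ and verify that it is a Hermitian relation in the sense preceding Lemma \ref{l3.8}. Property 1 is immediate from the displayed identity and self-adjointness; for property 2, given a candidate $(f_1,f_2)$ in the Hermitian dual of $\cM$, I invoke Lemma \ref{l3.7} to produce $u \in \dom(\oT_{\max})$ with $u(a) = f_1$, $u'(a) = f_2$ and $u \equiv 0$ near $b$, so that the displayed identity forces $(\oT_{\max}u,v) = (u,\oT v)$ for every $v \in \dom(\oT)$, hence $u \in \dom(\oT^*) = \dom(\oT)$. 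Lemma \ref{l3.8} then attaches to $\cM$ a unitary Cayley transform $U \in \cB(\cH)$; the Borel functional calculus writes $U = e^{2i\alpha}$ for some $\alpha = \alpha^* \in \cB(\cH)$ with $\sigma(\alpha) \subseteq [0,\pi]$, and a short parametrization of $\cM_\alpha$ via $h = \cos(\alpha) f_2 - \sin(\alpha) f_1$ shows its Cayley transform is likewise $e^{2i\alpha}$, so $\cM = \cM_\alpha$. Finally, $\oT \supseteq \oT_{\min}$ (since $\oT = \oT^* \supseteq \oT_{\max}^* = \oT_{\min}$ by Theorem \ref{t3.4}) combined with the characterization $\dom(\oT_{\min}) = \{w \in \dom(\oT_{\max}) \,|\, w(a)=w'(a)=0\}$ (from \eqref{3.18A} and Lemma \ref{l3.7}) shows that $\dom(\oT)$ is determined by its boundary values alone, so the equality $\cM = \cM_\alpha$ becomes exactly \eqref{3.29A}.

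For the converse, with $\alpha = \alpha^* \in \cB(\cH)$ given and $\oT$ defined by \eqref{3.29A}, symmetry of $\oT$ follows from the displayed identity once one writes the boundary data of any $u,v \in \dom(\oT)$ as $(-\sin(\alpha)h_u,\cos(\alpha)h_u)$ and $(-\sin(\alpha)h_v,\cos(\alpha)h_v)$ and uses $[\sin(\alpha),\cos(\alpha)]=0$ to see that $W_*(u,v)(a)$ vanishes. For $\dom(\oT^*) \subseteq \dom(\oT)$, note that $\oT \supseteq \oT_{\min}$ gives $\oT^* \subseteq \oT_{\min}^* = \oT_{\max}$ (so the boundary values of any $u \in \dom(\oT^*)$ are well-defined), and the defining identity of $\oT^*$ combined with the displayed identity forces $W_*(v,u)(a) = 0$ for every $v \in \dom(\oT)$; Lemma \ref{l3.7} allows $(v(a),v'(a))$ to run through $(-\sin(\alpha)h,\cos(\alpha)h)$ for arbitrary $h \in \cH$, whence $(h,\sin(\alpha)u'(a)+\cos(\alpha)u(a))_\cH = 0$ for all $h$, i.e.\ $u \in \dom(\oT)$. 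I expect the main technical obstacle to be the maximality step in the forward direction: there the limit-point hypothesis at $b$ is used essentially (to annihilate the boundary term at $b$), and Lemma \ref{l3.7} is what makes boundary values at $a$ prescribable from within $\dom(\oT_{\max})$ by compactly-supported-near-$a$ elements; the extraction of $\alpha$ from $U$ via the functional calculus and all remaining bookkeeping are then routine.
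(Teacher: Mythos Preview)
Your proposal is correct and follows essentially the same route as the paper: form the boundary relation $\cM$, verify it is Hermitian via Green's formula and Lemma \ref{l3.7}, extract $\alpha$ from its Cayley transform by the spectral theorem, and handle the converse by direct symmetry and the dual inclusion $\dom(\oT^*)\subseteq\dom(\oT)$ using the Hermitian property of $\cM_\alpha$. Your additional step invoking $\oT\supseteq\oT_{\min}$ and the boundary characterization of $\dom(\oT_{\min})$ to pass from $\cM=\cM_\alpha$ to the domain equality \eqref{3.29A} is a point the paper leaves implicit (it can alternatively be recovered a posteriori from the converse part, since $\oT\subseteq\oT_\alpha$ with both self-adjoint forces equality), so your exposition is in fact slightly more complete there.
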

\begin{proof}
Suppose $\oT=\oT^*\subseteq \oT_{\max}$ and define
\begin{equation}
\cM=\{(f_1,f_2)\in\cH\oplus\cH \,|\,\text{there exists } u\in\dom(\oT)
\text{ such that } f=u(a), f'=u'(a)\}.
\end{equation}
We show first that $\cM$ is a Hermitian relation: For $(f_1,f_2),(g_1,g_2)\in \cM$ let
$u,v\in\dom(\oT)$ be such that $u(a)=f_1$, $u'(a)=f_2$, $v(a)=g_1$, and
$v'(a)=g_2$. Since
$\oT$ is self-adjoint one infers from Green's formula \eqref{2.52A} that
\begin{align}
\begin{split} 
& 0=(\oT u,v)_{L^2((a,b);dx;\cH)}-(u,\oT v)_{L^2((a,b);dx;\cH)}   \\ 
& \quad =-W_*(u,v)(a)=(u'(a),v(a))_\cH-(u(a),v'(a))_\cH.
\end{split} 
\end{align}
Next assume $(f_1,f_2)\in\cH\oplus\cH$ and that $(f_1,v'(a))_\cH=(f_2,v(a))_\cH$ for
all $v\in\dom(\oT)$. By Lemma \ref{l3.7} there is a $u\in\dom(\oT_{\max})$ with initial values $(f_1,f_2)$ and hence,
\begin{align}
\begin{split}
& (\oT_{\max}u,v)_{L^2((a,b);dx;\cH)}-(u,\oT v)_{L^2((a,b);dx;\cH)}   \\
& \quad =-W_*(u,v)(a)=(f_2,v(a))_\cH - (f_1,v'(a))_\cH=0.
\end{split} 
\end{align}
This implies that $u\in\dom(\oT^*)=\dom(\oT)$ (with $\oT^*u=\oT_{\max}u$) and hence that $(f_1,f_2)\in \cM$. Thus $\cM$ is indeed a Hermitian relation. Let $U$ be its Cayley transform, $\{F_U(t)\}_{t \in [0,2\pi]}$ the family of strongly
right-continuous spectral projections associated with $U$, implying\footnote{We employ the standard slight abuse of notation where $F_U(t) = F_U ([0,t))$,
$t \in [0,2\pi]$, and use the normalization
$\slim_{\varepsilon\downarrow 0} F_U(-\varepsilon) = 0$,
$F_U (2\pi) = \slim_{\varepsilon\downarrow 0} F_U(2\pi + \varepsilon) = I_{\cH}$.},
\begin{equation}
(f,Ug)_\cH=\int_{[0,2\pi]} \e^{it}d(f,F_U (t)g)_\cH,\quad F(0)=0,
\end{equation}
and $\alpha$ the bounded self-adjoint operator defined by
\begin{equation}
(f,\alpha g)_\cH=\frac12\int_{[0,2\pi]} t \, d(f,F_U (t)g)_\cH.
\end{equation}
Since $U$ is the Cayley transform of $\cM$, we have $U(f_2 + i f_1)=f_2 - i f_1$, or  equivalently, $(U-I_{\cH})f_2 + i (U+I_{\cH})f_1 = 0$. Since $U=\e^{2i\alpha}$, the latter relation implies that $\sin(\alpha)f_2 + \cos(\alpha) f_1 = 0$. Thus,
$\cM \subseteq \cM_\alpha$, implying (as shown in the paragraph preceding Lemma
\ref{l3.8}), that $\cM=\cM_\alpha$. Thus the first part of Theorem \ref{t3.10} follows.

For the converse part, assume
$\alpha = \alpha^* \in \cB(\cH)$ is given, and let $\oT$ denote the restriction of
$\oT_{\max}$ to those functions satisfying $\sin(\alpha)u'(a)+\cos(\alpha)u(a)=0$,
that is, $u\in\dom(\oT)$ if and only if
$(u(a),u'(a))\in \cM_\alpha$. Therefore, if $u,v\in\dom(\oT)$, then
$W_*(u,v)(a)=W_*(u,v)(b)=0$ so that
$(\oT u,v)_{L^2((a,b);dx;\cH)}=(u,\oT v)_{L^2((a,b);dx;\cH)}$, implying
$\dom(\oT)\subseteq \dom(\oT^*)$. To show the opposite inclusion one first notes that
$\dom(\oT^*)\subseteq\dom(\oT_{\max})$ since $\dom(\oT_{\max}^*)\subseteq\dom(\oT)$.
Now assume that $u\in\dom(\oT^*)$ and $v\in\dom(\oT)$. Then $\oT^*u=\oT_{\max}u$ so
that $W_*(u,v)(a)=0$ for all $(v(a),v'(a))\in \cM_\alpha$. This implies that
$(u(a),u'(a))\in \cM_\alpha$, that is, $\dom(\oT^*)\subseteq \dom(\oT)$.
\end{proof}

Henceforth, under the assumptions of Theorem \ref{t3.10}, we denote the operator $\oT$ in $L^2((a,b);dx;\cH)$ associated with the boundary condition induced by $\alpha = \alpha^* \in \cB(\cH)$, that is, the restriction of $\oT_{\max}$ to the set
\begin{equation}
\dom(H_{\alpha})=\{u\in\dom(\oT_{\max}) \,|\, \sin(\alpha)u'(a)+\cos(\alpha)u(a)=0\}
\end{equation}
by $H_{\alpha}$. For a discussion of boundary conditions at infinity, see, for instance, 
\cite{MN11}, \cite{Mo09}, and \cite{RK85}.

Our next goal is to construct the square integrable solutions $Y(z,\cdot) \in\cB(\cH)$
of $\tau Y=zY$, $z\in\bbC\backslash\bbR$, the $\cB(\cH)$-valued Weyl--Titchmarsh solutions, under the assumptions that $a$ is a regular endpoint for $\tau$ and $b$ is of limit-point type for $\tau$.

For ease of notation, we denote in the following the resolvent
of $H_{\alpha}$ by $R_{z,\alpha}$, that is,
$R_{z,\alpha}=(H_{\alpha} - z I_{L^2((a,b);dx;\cH)})^{-1}$.

One recalls that the graph of $H_{\alpha}$, given by
\begin{equation}
\Gamma=\{(f,H_{\alpha}f) \in L^2((a,b);dx;\cH) \oplus L^2((a,b);dx;\cH) \,|\, f\in\dom(H_{\alpha})\},
\end{equation}
is a Hilbert subspace of $L^2((a,b);dx;\cH)\oplus L^2((a,b);dx;\cH)$. Equivalently, one
can consider $\dom(H_{\alpha})$ as a Hilbert space with scalar product
\begin{align}
\begin{split}
(f,g)_\Gamma=\int_a^b dx \, (f(x),g(x))_\cH
+ \int_a^b dx \, ((H_{\alpha}f)(x), (H_{\alpha}g)(x))_\cH,&  \\
f, g \in \dom(H_{\alpha}),&
\end{split}
\end{align}
and the corresponding norm $\|f\|_\Gamma=(f,f)_\Gamma^{1/2}$,
$f \in \dom(H_{\alpha})$.
Given a compact interval $J\subset [a,b)$ we know that $\dom(H_{\alpha})$ is contained in the Banach space $C^1(J;\cH)$ of continuously differentiable functions
on $J$ with values in $\cH$ and norm given by
$\|f\|_J=\sup_{x \in J} \|f(x)\|_{\cH}+\sup_{x \in J} \|f'(x)\|_{\cH}$. In fact the following
lemma holds.

\begin{lemma} \label{l3.11}
Assume Hypothesis \ref{h3.9} and suppose that $\alpha \in \cB(\cH)$ is self-adjoint.
For each compact interval $J \subset [a,b)$ there is a constant $C_J$ such that
$\|y\|_J \leq C_J \|y\|_\Gamma$ for every $y\in\dom(H_{\alpha})$.
\end{lemma}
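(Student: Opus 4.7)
The plan is to apply the closed graph theorem to the natural inclusion map
\[
\iota \colon (\dom(H_{\alpha}),\|\cdot\|_\Gamma) \longrightarrow (C^1(J;\cH),\|\cdot\|_J).
\]
Both sides are Banach spaces: the left one because $H_{\alpha}$, being self-adjoint by Theorem~\ref{t3.10}, is closed; the right one because $\cH$ is complete and $J$ is compact. Once $\iota$ is shown to have a closed graph, its continuity will yield the desired inequality $\|y\|_J\le C_J \|y\|_\Gamma$ at once.

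First I would verify that $\iota$ is well-defined, i.e., that every $y\in\dom(H_{\alpha})\subseteq\dom(\oT_{\max})$ restricts to a $C^1$ function on $J$. Fix $c\in(a,b)$ with $J\subseteq[a,c]$ and set $f:=\oT_{\max}y$, so that $y$ satisfies $-y''+Vy=f$ on $(a,c)$ in the strong sense of Definition~\ref{d2.2}, with $f\in L^2((a,c);dx;\cH)\subset L^1((a,c);dx;\cH)$. Because $a$ is regular for $\tau$, Theorem~\ref{t2.3} applied with $x_0=a$ (as permitted in the regular case, cf.\ the remark following Definition~\ref{d2.6}), together with uniqueness of the Cauchy problem with data at $x_0=c$, identifies $y$ on $(a,c]$ with the $\cH$-valued solution constructed from initial data at $a$; that solution and its strong derivative are continuous on $[a,c]$, so $y|_J\in C^1(J;\cH)$.

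To establish closedness of the graph of $\iota$, suppose $\{y_n\}_{n\in\bbN}\subset\dom(H_{\alpha})$ satisfies $y_n\to y$ in $\|\cdot\|_\Gamma$ and $\iota(y_n)\to z$ in $\|\cdot\|_J$. The $L^2$-convergence $y_n\to y$ admits a subsequence converging pointwise a.e.\ on $(a,b)$, while uniform convergence on $J$ gives $y_n\to z$ pointwise there; hence $y=z$ a.e.\ on $J$ and, by continuity of both sides on $J$, in fact $y=z$ pointwise on $J$. The uniform convergence $y_n'\to z'$ on $J$, combined with $y=z$ on $J$, forces the strong derivative $y'$ to coincide with the classical derivative $z'$ at every point of $J$ by passing to the limit in difference quotients; thus $\iota(y)=z$. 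The closed graph theorem then produces the constant $C_J$. The main delicate point is the $C^1$-extension up to the regular endpoint~$a$; the rest is a routine subsequence argument.
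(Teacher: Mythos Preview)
Your proof is correct and follows essentially the same route as the paper: both arguments apply the closed graph theorem to the restriction map $\dom(H_\alpha)\to C^1(J;\cH)$. The only difference is cosmetic---where the paper identifies $y|_J$ with the $C^1$-limit by noting that uniform convergence on $J$ implies $L^2(J)$-convergence (hence uniqueness of $L^2$-limits), you pass to an a.e.-convergent subsequence; and your separate check that $y'=z'$ is redundant once $y=z$ pointwise on $J$, since both are already $C^1$ there.
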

\begin{proof}
Suppose $\{y_n\}_{n\in\bbN} \subset \dom(H_{\alpha})$ is a sequence converging
to $y \in \dom(H_{\alpha})$ with respect to the norm $\|\cdot\|_\Gamma$ and that
$y_n|_J$ converges in $C^1(J;\cH)$ to $\tilde y$ as $n\to\infty$. It follows that
\begin{equation}
\|y_n-y\|_{L^2((a,b);dx;\cH)} + \|y_n-y\|_{L^2(J;dx;\cH)} \underset{n\to\infty}{\longrightarrow} 0.
\end{equation}
On account of the uniform convergence in $C^1(J;\cH)$ one also concludes that
$\|y_n-\tilde y\|_{L^2(J;dx;\cH)} \to 0$ as $n\to\infty$. Thus, $y|_J=\tilde y$ so that the restriction map $y\mapsto y|_J$ defined on $\dom(H_{\alpha})$ is closed and hence
bounded by the closed graph theorem.
\end{proof}

We recall that a point $\lambda \in \bbC$ is said to be in the {\it approximate point spectrum} of a closed operator $T\in\cB(\cH)$ if there is a sequence
$\{x_n\}_{n\in\bbN}\subset \cH$ such that $\|x_n\|_{\cH}=1$, $n \in \bbN$, and
$\lim_{n\to\infty}\|(T-\lambda I_{\cH})x_n\|_{\cH} = 0$. If $\lambda$ is an eigenvalue, then it is, of course, in the approximate point spectrum. $\lambda$ is also in the approximate point spectrum, if $T-\lambda I_{\cH}$ is injective and its image is dense in $\cH$ but not closed, a fact that can be seen as follows: In this case $(T-\lambda I_{\cH})^{-1}$ is a densely defined unbounded operator, that is, there is a sequence $f_n$ such that
$\|f_n\|_{\cH}=1$ and $\|(T-\lambda I_{\cH})^{-1}f_n\|_{\cH}>n$, $n\in\bbN$.  This is equivalent to the existence of a sequence $\{y_n\}_{n\in\bbN} \subset \cH$ (namely
$y_n=(T-\lambda)^{-1}f_n/\|(T-\lambda)^{-1}f_n\|$, $n\in\bbN$) such that
$\|y_n\|_{\cH}=1$ and $\|(T-\lambda I_{\cH})y_n\|_{\cH}<1/n$, $n\in\bbN$, so that
$\lambda$ is in the approximate point spectrum. If $T$ has no residual spectrum, in particular, if $T$ is self-adjoint, its spectrum coincides with its approximate point spectrum.

\begin{lemma} \label{l3.12}
Suppose $\alpha\in\cB(\cH)$ is self-adjoint. If $c_j \in\bbC$, $j=1,2$, with
$c_1/c_2 \in \bbC\backslash\bbR$, then $0 \in \rho(c_1 \sin(\alpha) + c_2 \cos(\alpha))$.
\end{lemma}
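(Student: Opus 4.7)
The plan is to use the bounded continuous functional calculus for the self-adjoint operator $\alpha$. Since $\sin(\alpha)$ and $\cos(\alpha)$ are defined via the spectral theorem applied to the continuous functions $t \mapsto \sin(t)$ and $t \mapsto \cos(t)$, the operator
\begin{equation*}
T = c_1 \sin(\alpha) + c_2 \cos(\alpha) = f(\alpha), \quad f(t) = c_1 \sin(t) + c_2 \cos(t),
\end{equation*}
is itself the functional calculus of $\alpha$ applied to the bounded continuous function $f$. By the spectral mapping theorem for bounded self-adjoint operators,
\begin{equation*}
\sigma(T) = f(\sigma(\alpha)) \subseteq f(\bbR).
\end{equation*}
Hence it suffices to prove that $f(t) \neq 0$ for every $t \in \bbR$, since then continuity of $f$ and compactness of $\sigma(\alpha) \subseteq [-\|\alpha\|_{\cB(\cH)}, \|\alpha\|_{\cB(\cH)}]$ yield $\inf_{t \in \sigma(\alpha)} |f(t)| > 0$, so that $1/f$ is bounded and continuous on $\sigma(\alpha)$ and $(1/f)(\alpha) = T^{-1} \in \cB(\cH)$.

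To verify that $f$ has no real zero, I would argue by contradiction: suppose $f(t_0) = 0$ for some $t_0 \in \bbR$, so $c_1 \sin(t_0) = - c_2 \cos(t_0)$. First, note $c_2 \neq 0$, as otherwise $c_1/c_2$ is not defined. If $\sin(t_0) = 0$, then $\cos(t_0) \neq 0$ forces $c_2 = 0$, a contradiction. Thus $\sin(t_0) \neq 0$, and we may divide to obtain $c_1/c_2 = -\cos(t_0)/\sin(t_0) \in \bbR$, contradicting the assumption $c_1/c_2 \in \bbC \setminus \bbR$.

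I do not anticipate a serious obstacle here: the argument is essentially a combination of the continuous functional calculus with an elementary trigonometric observation. The only point requiring minor care is the passage from "$f$ nowhere zero on $\bbR$" to "$T$ boundedly invertible," which is handled by compactness of $\sigma(\alpha)$ together with continuity of $f$; equivalently, one may simply invoke the spectral mapping theorem $0 \notin f(\sigma(\alpha)) = \sigma(T)$ to conclude $0 \in \rho(T)$.
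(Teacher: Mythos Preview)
Your proof is correct and takes a cleaner route than the paper's. You invoke the spectral mapping theorem for the continuous functional calculus, $\sigma(f(\alpha)) = f(\sigma(\alpha))$, and reduce the question to the elementary observation that $f(t) = c_1 \sin t + c_2 \cos t$ has no real zero when $c_1/c_2 \notin \bbR$. The paper instead argues directly with approximate point spectra: assuming a normalized sequence $x_n$ with $(c_1 \sin(\alpha) + c_2 \cos(\alpha))x_n \to 0$, it multiplies through by $c_1 \sin(\alpha)$ and by $c_2 \cos(\alpha)$, subtracts, and uses $\sin^2(\alpha) + \cos^2(\alpha) = I_{\cH}$ to deduce that $c_2^2/(c_1^2+c_2^2)$ lies in the (real) spectrum of $\sin^2(\alpha)$, which forces $c_1/c_2 \in \bbR$; the residual spectrum is then excluded separately by passing to the adjoint. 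Your argument is shorter and more conceptual, packaging everything into the spectral mapping theorem; the paper's argument is more hands-on and ties in with the discussion of approximate point spectra introduced just before the lemma, avoiding any appeal to the functional calculus beyond the definitions of $\sin(\alpha)$ and $\cos(\alpha)$.
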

\begin{proof}
Let $A=\sin(\alpha)$, $B=\cos(\alpha)$, and assume that $c_j \in\bbC$, $j=1,2$, with
$c_1/c_2 \in \bbC\backslash\bbR$. The spectral theorem implies that the spectra
of $A$ and $B$ are contained in $[-1,1]$ and that the spectra of $A^2$ and $B^2$ are contained in $[0,1]$. By way of contradiction, assume that $0$ is in the approximate point spectrum of $c_1 A+c_2 B$. Then there is a sequence $\{x_n\}_{n\in\bbN} \subset \cH$ such that $\|x_n\|_{\cH}=1$, $n\in\bbN$, and $\lim_{n\to\infty}\|(c_1 A + c_2 B)x_n\|_{\cH} = 0$. Accordingly, also $\|(c_1^2A^2+c_1c_2AB)x_n\|_{\cH} \to 0$ and
$\|(c_1c_2BA+c_2^2 B^2)x_n\|_{\cH} \to 0$ as $n\to\infty$. Hence,
$(c_1^2A^2-c_2^2B^2)x_n = (c_1^2+c_2^2)A^2x_n-c_2^2x_n$ tends to zero as
$n\to\infty$, so that $c_2^2/(c_1^2+c_2^2)$ is in the approximate point spectrum of $A^2$. This implies that $c_1/c_2$ is real, a contradiction. Thus, $0$ is not in the approximate point spectrum of $c_1A+c_2B$. Hence, for $0$ to be in the spectrum of $c_1A+c_2B$ 
would require that its image not be dense in $\cH$, that is, that $\ker(\ol{c_1}A+\ol{c_2}B)=\ker((c_1A+c_2B)^*)=\ran(c_1A+c_2B)^\perp \supsetneq \{0\}$. But this is impossible as we have just shown.
\end{proof}

Fix $c\in(a,b)$ and $z \in \rho(H_{\alpha})$. For any $f_0\in\cH$ let $f=f_0\chi_{[a,c]}\in L^2((a,b);dx;\cH)$ and $u(f_0,z,\cdot)=R_{z,\alpha}f \in\dom(H_{\alpha})$. By the variation of constants formula,
\begin{align}
\begin{split}
u(f_0,z,x) &=\theta_{\alpha}(z,x,a)\bigg(g(z)+\int_x^c dx' \, \phi_{\alpha}(\ol z,x',a)^* f_0\bigg)\\
& \quad  +\phi_{\alpha}(z,x,a)\bigg(h(z)-\int_x^c dx' \, \theta_{\alpha}(\ol z,x',a)^* f_0\bigg)
 \end{split}
\end{align}
for suitable vectors $g(z) \in \cH$, $h(z) \in \cH$. Since
$u(f_0,z,\cdot)\in\dom(H_{\alpha})$, one infers that
\begin{equation} \label{3.40A}
g(z)=-\int_a^c dx' \, \phi_{\alpha}(\ol z,x',a)^* f_0, \quad z \in \rho(H_{\alpha}),
\end{equation}
and that
\begin{equation} \lb{3.40B}
h(z)=\cos(\alpha)u'(f_0,z,a) - \sin(\alpha)u(f_0,z,a)
+ \int_a^c dx' \, \theta_{\alpha}(\ol z,x',a)^* f_0, \quad z \in \rho(H_{\alpha}).
\end{equation}

\begin{lemma} \lb{l3.13}
Assume Hypothesis \ref{h3.9} and suppose that $\alpha \in \cB(\cH)$ is self-adjoint. In addition, choose $c \in (a,b)$ and introduce $g(\cdot)$ and $h(\cdot)$ as in \eqref{3.40A} and \eqref{3.40B}. Then the maps
\begin{equation}
C_{1,\alpha}(c,z):\begin{cases} \cH\to\cH, \\
f_0\mapsto g(z), \end{cases} \quad
C_{2,\alpha}(c,z): \begin{cases} \cH\to\cH, \\
f_0\mapsto h(z), \end{cases}  \quad z \in \rho(H_{\alpha}),
\end{equation}
are linear and bounded. Moreover, $C_{1,\alpha}(c,\cdot)$ is entire and
$C_{2,\alpha}(c,\cdot)$ is analytic on $\rho(H_{\alpha})$. In addition,
$C_{1,\alpha}(c,z)$ is boundedly invertible if $z\in \bbC\backslash\bbR$ and $c$
is chosen appropriately.
\end{lemma}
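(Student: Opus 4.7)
My plan is to address linearity and boundedness first, then analyticity, and finally the bounded invertibility of $C_{1,\alpha}(c,z)$, which I expect to be the only nontrivial step.

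Linearity of both maps is immediate from the defining formulas. For boundedness of $C_{1,\alpha}(c,z)$, since $\phi_\alpha(\bar z,\cdot,a)$ is $\cB(\cH)$-norm continuous on the compact interval $[a,c]$ by Corollary~\ref{c2.5}\,$(i)$, one obtains $\|C_{1,\alpha}(c,z)f_0\|_\cH \le (c-a)\max_{x'\in[a,c]}\|\phi_\alpha(\bar z,x',a)\|_{\cB(\cH)}\|f_0\|_\cH$. For $C_{2,\alpha}(c,z)$ I would combine boundedness of the resolvent $R_{z,\alpha}$ on $L^2((a,b);dx;\cH)$ with the identity $H_\alpha R_{z,\alpha} = I + zR_{z,\alpha}$ (yielding boundedness of $R_{z,\alpha}$ into the graph-normed $\dom(H_\alpha)$) and Lemma~\ref{l3.11} (bounded point evaluations $u\mapsto u(a),\,u'(a)$ from the graph-normed domain to $\cH$). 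For the analyticity claims, Corollary~\ref{c2.5}\,$(iii)$ makes $z\mapsto \phi_\alpha(z,x',a)$ $\cB(\cH)$-norm entire, so $z\mapsto\phi_\alpha(\bar z,x',a)^*$ is also entire (conjugating the argument produces an anti-analytic function, whose adjoint is analytic); interchange of $\partial_z$ with the Bochner integral, justified by locally uniform (in $z$) bounds and Cauchy's formula, gives $C_{1,\alpha}(c,\cdot)$ entire. For $C_{2,\alpha}(c,\cdot)$, the resolvent $z\mapsto R_{z,\alpha}$ is $\cB(L^2((a,b);dx;\cH))$-analytic on $\rho(H_\alpha)$ and, by the same identity, also analytic into the graph-normed $\dom(H_\alpha)$; composing with Lemma~\ref{l3.11} yields analyticity of $z\mapsto u(f_0,z,a),\,u'(f_0,z,a)$ on $\rho(H_\alpha)$.

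The bounded invertibility of $C_{1,\alpha}(c,z)$ is the main obstacle. Writing $C_{1,\alpha}(c,z) = -\Phi(c,z)^*$ with $\Phi(c,z) := \int_a^c\phi_\alpha(\bar z,x',a)\,dx'$, it suffices to invert $\Phi(c,z)$. For $V\equiv 0$, explicit integration gives $\Phi^{V=0}(c,z) = c_1\sin(\alpha) + c_2\cos(\alpha)$ with $c_1 = -\bar z^{-1/2}\sin(\bar z^{1/2}(c-a))$ and $c_2 = \bar z^{-1}(1-\cos(\bar z^{1/2}(c-a)))$, so $c_1/c_2 = -\bar z^{1/2}\cot(\bar z^{1/2}(c-a)/2)$. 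Writing $\bar z^{1/2} = p+iq$ with $q\neq 0$, one computes
\[
\Im(c_1/c_2)=\frac{q\sin(p(c-a))-p\sinh(q(c-a))}{2\bigl(\sin^2(p(c-a)/2)+\sinh^2(q(c-a)/2)\bigr)},
\]
and the numerator's derivative in $c-a$ equals $pq(\cos(p(c-a))-\cosh(q(c-a)))$, which has fixed sign for $c>a$ since $\cosh\ge 1\ge\cos$; hence the numerator is nonzero for all $c>a$. Thus $c_1/c_2\notin\bbR$, and Lemma~\ref{l3.12} yields bounded invertibility of $\Phi^{V=0}(c,z)$ for every $c>a$ when $z\in\bbC\setminus\bbR$.

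To pass to general $V$, I would use the integral equation $\phi_\alpha(\bar z,x,a) = -\sin(\alpha) + (x-a)\cos(\alpha) + \int_a^x(x-s)(V(s)-\bar z)\phi_\alpha(\bar z,s,a)\,ds$ and Gronwall's inequality to bound $\|\Phi^V(c,z) - \Phi^{V=0}(c,z)\|_{\cB(\cH)}$ by a constant times $(c-a)^2\int_a^c\|V(s)\|_{\cB(\cH)}\,ds$, and then choose $c$ close enough to $a$ so that this perturbation is dominated by $\|(\Phi^{V=0}(c,z))^{-1}\|^{-1}$, giving bounded invertibility of $\Phi^V(c,z)$ via a Neumann series. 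The subtlety I anticipate is the balancing: $\|(\Phi^{V=0}(c,z))^{-1}\|$ may grow as $c\to a$ when $\sin(\alpha)$ fails to be boundedly invertible, but since $\int_a^c\|V(s)\|_{\cB(\cH)}\,ds\to 0$ by local integrability of $\|V\|_{\cB(\cH)}$, an admissible $c$ still exists.
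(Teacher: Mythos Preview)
Your approach is essentially the paper's. For boundedness and analyticity you invoke the same ingredients (Corollary~\ref{c2.5}, Lemma~\ref{l3.11}, resolvent analyticity), and for the bounded invertibility of $C_{1,\alpha}(c,z)$ the paper likewise compares with the $V=0$ operator $S=c_1\sin(\alpha)+c_2\cos(\alpha)$, shows $S$ is invertible via Lemma~\ref{l3.12}, and then asserts that $\|C_{1,\alpha}(c,z)-S\|_{\cB(\cH)}$ is small for $c-a$ small ``by a proof similar to that of Lemma~\ref{l2.10}'', which is exactly your Gronwall perturbation. The only cosmetic difference is in verifying $c_1/c_2\notin\bbR$: the paper notes that $\tan(\mu)/\mu\in\bbR$ forces $\mu\in\bbR\cup i\bbR$, while you compute $\Im(c_1/c_2)$ directly; both are valid.

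Your remark about the balancing is perceptive, and the paper is equally terse on this point. Be aware, though, that your proposed resolution is not quite complete as stated: when $\sigma(\alpha)$ contains an interval near a multiple of $\pi$, one can have $\|(\Phi^{V=0}(c,z))^{-1}\|^{-1}$ as small as order $(c-a)^4$ (attained near the spectral value $t\approx(c-a)/2$ where the leading real part of $c_1\sin t+c_2\cos t$ cancels), so the bare fact $\int_a^c\|V\|\to 0$ does not by itself beat the bound $C(c-a)^2\int_a^c\|V\|$. The paper does not spell out this rate comparison either; a fully rigorous treatment would require a more careful quantitative estimate here.
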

\begin{proof}
According to equation \eqref{3.40A} one has
\begin{equation}
C_{1,\alpha}(c,z)=-\int_a^c dx' \, \phi_{\alpha}(\ol z,x',a)^*.
\end{equation}
By Corollary \ref{c2.5}\,$(iii)$, $C_{1,\alpha}(c,\cdot)$ is entire.

Next, one observes that $\rho(H_{\alpha}) \ni z\mapsto u(f_0,z,x)=(R_{z,\alpha}f)(x)$
is analytic and its derivative at $z_0$ is given by $(R_{z_0,\alpha}^2f)(x)$. This follows
from Lemma \ref{l3.11} and the first resolvent identity since
\begin{align}
& \left\|\frac{(R_{z,\alpha}f)(x)-(R_{z_0,\alpha}f)(x)}{z-z_0}
- (R_{z_0,\alpha}^2f)(x)\right\|_\cH
\leq \left\|\frac{R_{z,\alpha}f-R_{z_0,\alpha}f}{z-z_0}-R_{z_0,\alpha}^2f\right\|_J   \no \\
& \quad \leq C_J
\left\|\frac{R_{z,\alpha}f-R_{z_0,\alpha}f}{z-z_0}-R_{z_0,\alpha}^2f\right\|_\Gamma
\leq C_J \left\|(R_{z,\alpha}-R_{z_0,\alpha})R_{z_0,\alpha}f\right\|_\Gamma,
\end{align}
as long as $x \in J$, with $J \subset (a,b)$ a compact interval, noting in addition that
\begin{equation}
H_{\alpha}(R_{z,\alpha}-R_{z_0,\alpha})=zR_{z,\alpha}-z_0R_{z_0,\alpha}.
\end{equation}
Similarly, $z\mapsto u'(f_0,z,x)=(R_{z,\alpha}f)'(x)$ is analytic, proving that
$C_{2,\alpha}(c,\cdot)$ is analytic on $\rho(H_{\alpha})$.

It remains to show the bounded invertibility of $C_{1,\alpha}(c,z)$ for
$z\in\bbC\backslash\bbR$ and appropriate $c\in (a,b)$. In order for the expression
\begin{equation}
\frac{\tan(\mu)}{\mu}=\frac{1-\cos(2\mu)}{\mu\sin(2\mu)}, \quad \mu \in\bbC,
\end{equation}
to be real-valued it is necessary that $\mu$ be either real or purely imaginary. Hence, using Lemma \ref{l3.12}, one finds that the operator
\begin{align}
\begin{split}
S &=\sin(\alpha)\frac{\sin(k(c-a))}{k}+\cos(\alpha)\frac{\cos(k(c-a))-1}{k^2}\\
 &=\int_a^c dx' \,\bigg[\sin(\alpha)\cos(k(x'-a)) - \cos(\alpha)\frac{\sin(k(x'-a))}{k}\bigg]
 \end{split}
\end{align}
is boundedly invertible unless $k^2 \in \bbR$. A proof similar to that of Lemma \ref{l2.10}
then shows that
\begin{equation}
\big\|C_{1,\alpha}(c,k^2)-S\big\|_{\cB(\cH)}
\end{equation}
is arbitrarily small for $c-a$ is sufficiently small. This proves that $C_{1,\alpha}(c,z)$ is boundedly invertible if $z\in\bbC\backslash\bbR$ and $c$ is chosen appropriately.
\end{proof}

Using the bounded invertibility of $C_{1,\alpha}(c,z)$ we now define
\begin{equation}
\psi_{\alpha}(z,x)=\theta_{\alpha}(z,x,a)
+ \phi_{\alpha}(z,x,a)C_{2,\alpha}(c,z)C_{1,\alpha}(c,z)^{-1},
\quad z \in \bbC\backslash\bbR, \; x \in [a,b),     \lb{3.49A}
\end{equation}
still assuming Hypothesis \ref{h3.9} and $\alpha = \alpha^* \in \cB(\cH)$. By Lemma \ref{l3.13}, $\psi_{\alpha}(\cdot,x)$ is analytic on
$z \in \bbC\backslash\bbR$ for fixed $x \in [a,b]$.

Since $\psi_{\alpha}(z,\cdot) f_0$ is the solution of the initial value problem
\begin{equation}
\tau y =z y, \quad y(c)=u(f_0,z,c), \; y'(c)=u'(f_0,z,c), \quad z \in \bbC\backslash\bbR,
\end{equation}
the function $\psi_{\alpha}(z,x)C_{1,\alpha}(z,c)f_0$ equals
$u(f_0,z,x)$ for $x\geq c$, and thus is square integrable for every choice of $f_0\in\cH$.
In particular, choosing $c \in (a,b)$ such that $C_{1,\alpha}(z,c)^{-1} \in \cB(\cH)$, one infers that
\begin{equation}
\int_a^b dx \, \|\psi_{\alpha}(z,x) f\|_{\cH}^2 < \infty, \quad
f \in \cH, \; z \in \bbC\backslash\bbR.
\end{equation}

Every $\cH$-valued solution of $\tau y=z y$ may be written as
\begin{equation}
y=\theta_\alpha(z,\cdot,a)f_{\alpha,a} + \phi_\alpha(z,\cdot,a)g_{\alpha,a},
\end{equation}
with
\begin{equation}
f_{\alpha,a}=(\cos\alpha)y(a)+(\sin\alpha)y'(a), \quad
g_{\alpha,a}=-(\sin\alpha)y(a)+(\cos\alpha)y'(a).
\end{equation}
Hence we can define the maps
\begin{align}
& \mc C_{1,\alpha,z}:\begin{cases} \cD_z\to\cH, \\
\theta_\alpha(z,\cdot,a) f_{\alpha,a}
+ \phi_\alpha(z,\cdot,a) g_{\alpha,a} \mapsto f_{\alpha,a}, \end{cases} \\
& \mc C_{2,\alpha,z}: \begin{cases} \cD_z\to\cH, \\
\theta_\alpha(z,\cdot,a) f_{\alpha,a} + \phi_\alpha(z,\cdot,a) g_{\alpha,a} \mapsto g_{\alpha,a}. \end{cases}
\end{align}

\begin{lemma} \label{l3.14}
Assume Hypothesis \ref{h3.9}, suppose that $\alpha \in \cB(\cH)$ is self-adjoint, and
let $z\in\bbC\backslash\bbR$. Then the operators
$\mc C_{1,\alpha,z}$ and $\mc C_{2,\alpha,z}$ are linear bijections and hence 
\begin{equation}
\mc C_{1,\alpha,z}, \, \mc C_{1,\alpha,z}^{-1}, \, \mc C_{2,\alpha,z}, \, 
\mc C_{2,\alpha,z}^{-1} \in \cB(\cH).   \lb{3.57}
\end{equation}
\end{lemma}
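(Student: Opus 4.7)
The plan is to establish the statement for $\mc C_{1,\alpha,z}$ first and then to reduce the corresponding claims for $\mc C_{2,\alpha,z}$ to this case by applying the same analysis to the rotated boundary parameter $\tilde\alpha=\alpha+(\pi/2)I_\cH$. Since $\oT_{\max}$ is closed (Theorem \ref{t3.4}), the subspace $\cD_z=\ker(\oT_{\max}-zI_{L^2((a,b);dx;\cH)})$ is closed in $L^2((a,b);dx;\cH)$ and hence a Hilbert space. Therefore, once $\mc C_{1,\alpha,z}:\cD_z\to\cH$ is shown to be a bounded linear bijection, the open mapping theorem automatically produces boundedness of its inverse.

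Boundedness follows from Lemma \ref{l2.10}: since $f_{\alpha,a}=\cos(\alpha)y(a)+\sin(\alpha)y'(a)$ and $\|\sin(\alpha)\|_{\cB(\cH)},\|\cos(\alpha)\|_{\cB(\cH)}\leq 1$, it suffices to bound $\|(y(a),y'(a))^\top\|_{\cH\oplus\cH}$ by $\|y\|_{L^2((a,b);dx;\cH)}$, and for $y\in\cD_z$ and $\varepsilon>0$ sufficiently small (depending only on $z$ and $V$), Lemma \ref{l2.10} gives
\begin{equation*}
c_0^2\varepsilon^3\big\|\big(y(a),y'(a)\big)^\top\big\|_{\cH\oplus\cH}^2
\leq\int_a^{a+\varepsilon}dx\,\|y(x)\|_\cH^2
\leq\|y\|_{L^2((a,b);dx;\cH)}^2.
\end{equation*}
Surjectivity is immediate from the Weyl--Titchmarsh solution constructed in \eqref{3.49A}: given $f\in\cH$, the function $y=\psi_\alpha(z,\cdot)f$ lies in $\cD_z$ (square-integrability having been established just after \eqref{3.49A}), and from $y(a)=\cos(\alpha)f-\sin(\alpha)M(z)f$, $y'(a)=\sin(\alpha)f+\cos(\alpha)M(z)f$, with $M(z)=C_{2,\alpha}(c,z)C_{1,\alpha}(c,z)^{-1}$, one obtains $\mc C_{1,\alpha,z}(y)=(\cos^2(\alpha)+\sin^2(\alpha))f=f$.

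The core of the proof is the injectivity of $\mc C_{1,\alpha,z}$, where the limit-point hypothesis at $b$ is used. Suppose $y\in\cD_z$ has $f_{\alpha,a}=0$; then $y(a)=-\sin(\alpha)g$ and $y'(a)=\cos(\alpha)g$ with $g:=g_{\alpha,a}$. For an arbitrary $h\in\cH$, form $\tilde y=\psi_\alpha(\bar z,\cdot)h$; this lies in $\dom(\oT_{\max})$ since $\tilde y\in L^2$ and $\tau\tilde y=\bar z\tilde y\in L^2$. Because $V=V^*$, $\tilde y$ satisfies $(\tau-\bar z)\tilde y=0$ while $y$ satisfies $(\tau-z)y=0$, so by \eqref{2.32A} the Wronskian $W_*(\tilde y,y)$ is constant on $(a,b)$. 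The limit-point hypothesis (Definition \ref{d3.6}) forces $W_*(\tilde y,y)(b)=0$, hence $W_*(\tilde y,y)(a)=0$ as well. Substituting the boundary values of $\tilde y$ and $y$ and using the commutativity of $\sin(\alpha)$ and $\cos(\alpha)$, the terms containing $M(\bar z)^*$ cancel pairwise and the identity collapses to $(h,(\sin^2(\alpha)+\cos^2(\alpha))g)_\cH=(h,g)_\cH=0$. Since $h\in\cH$ was arbitrary, $g=0$ and consequently $y=0$.

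To handle $\mc C_{2,\alpha,z}$, put $\tilde\alpha=\alpha+(\pi/2)I_\cH\in\cB(\cH)$; this is self-adjoint, and the entire Weyl--Titchmarsh construction leading up to \eqref{3.49A} applies verbatim with $\tilde\alpha$ in place of $\alpha$. Functional calculus gives $\sin(\tilde\alpha)=\cos(\alpha)$ and $\cos(\tilde\alpha)=-\sin(\alpha)$, so the uniqueness part of Theorem \ref{t2.3} yields $\theta_{\tilde\alpha}(z,\cdot,a)=\phi_\alpha(z,\cdot,a)$ and $\phi_{\tilde\alpha}(z,\cdot,a)=-\theta_\alpha(z,\cdot,a)$. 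Matching the two expansions $y=\theta_\alpha f_{\alpha,a}+\phi_\alpha g_{\alpha,a}=\theta_{\tilde\alpha}f_{\tilde\alpha,a}+\phi_{\tilde\alpha}g_{\tilde\alpha,a}$ of an arbitrary $y\in\cD_z$ gives $f_{\tilde\alpha,a}=g_{\alpha,a}$ (and $g_{\tilde\alpha,a}=-f_{\alpha,a}$), whence $\mc C_{2,\alpha,z}=\mc C_{1,\tilde\alpha,z}$; the three previous paragraphs applied with $\tilde\alpha$ in place of $\alpha$ then finish the proof. The main obstacle is the injectivity step: the limit-point condition at $b$ has to be transferred to a boundary identity at $a$ by constancy of $W_*(\tilde y,y)$, and the precise cancellation of the $M(\bar z)^*$-terms in that identity is the delicate computation on which the argument hinges.
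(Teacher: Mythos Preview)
Your proof is correct, but your injectivity argument takes a longer route than the paper's. The paper observes that if $u\in\cD_z$ has $f_{\alpha,a}=0$ (or $g_{\alpha,a}=0$), then the boundary data at $a$ are $(-\sin(\alpha)g,\cos(\alpha)g)$ (respectively $(\cos(\alpha)f,\sin(\alpha)f)$), and in either case $W_*(u,u)(a)=2i\,\Im(u(a),u'(a))_\cH=0$ because $\sin(\alpha)\cos(\alpha)$ is self-adjoint. Combined with $W_*(u,u)(b)=0$ from the limit-point hypothesis, Green's formula \eqref{3.17A} with $v=u$ gives directly $0=(\bar z-z)\|u\|_{L^2}^2$, hence $u=0$. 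This avoids introducing the auxiliary Weyl solution $\tilde y=\psi_\alpha(\bar z,\cdot)h$ and the explicit cancellation of $M(\bar z)^*$-terms you carry out. Your computation is correct, but the paper's version is both shorter and more robust (it does not rely on having already constructed $\psi_\alpha$ for the conjugate parameter $\bar z$).

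On the other hand, your treatment of boundedness is more transparent than the paper's: you invoke Lemma \ref{l2.10} to bound $\|(y(a),y'(a))^\top\|_{\cH\oplus\cH}$ directly by the $L^2$-norm of $y$ on $[a,a+\varepsilon]$, whereas the paper appeals somewhat tersely to the closed graph theorem. Your reduction of $\mc C_{2,\alpha,z}$ to $\mc C_{1,\tilde\alpha,z}$ via $\tilde\alpha=\alpha+(\pi/2)I_\cH$ is the same idea the paper uses (it constructs $v=\psi_{\alpha+\pi/2}(z,\cdot)f$ to prove surjectivity of $\mc C_{2,\alpha,z}$); you just apply it more systematically.
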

\begin{proof}
It is clear that $\mc C_{1,\alpha,z}$ and $\mc C_{2,\alpha,z}$ are linear. Given
$f\in\cH$ one concludes that $u=\psi_{\alpha}(z,\cdot)f$ and
$v=\psi_{\alpha+\pi/2}(z,\cdot)f$
are in $\cD_z$ and $\mc C_{1,\alpha,z}u=\mc C_{2,\alpha,z}v=f$. This proves surjectivity of $\mc C_{1,\alpha,z}$ and $\mc C_{2,\alpha,z}$.

Next, let $u=\theta_\alpha f+\phi_\alpha g\in\cD_z$ and $f=0$ or $g=0$. Then
$W_*(u,u)(a)=0$. Moreover, since $b$ is of limit-point type for $\tau$, $W_*(u,u)(b)=0$. Hence, by \eqref{3.17A},
\begin{align}
\begin{split} 
& 0 = (\oT_{\max}u,u)_{L^2((a,b);dx;\cH)}-(u,\oT_{\max}u)_{L^2((a,b);dx;\cH)}  \\
& \quad = (zu,u)_{L^2((a,b);dx;\cH)}-(u,zu)_{L^2((a,b);dx;\cH)}
=(\ol z -z)\|u\|_{L^2((a,b);dx;\cH)}^2, 
\end{split} 
\end{align}
implying $u=0$ and injectivity of $\mc C_{1,\alpha,z}$ and $\mc C_{2,\alpha,z}$. 
Since for any invertible operator $T$ in $\cH$ one has that $T^{-1}$ is closed if 
and only if $T$ is (cf.\ \cite[Sect.\ III.5.2]{Ka80}), the closed graph theorem 
(see, \cite[Sect.\ III.5.4]{Ka80}) yields \eqref{3.57}. 
\end{proof}

At this point we are finally in the position to define the 
Weyl--Titchmarsh $m$-function for $z\in\bbC\backslash\bbR$ by setting
\begin{equation} \label{3.57A}
m_{\alpha}(z)=\mc C_{2,\alpha,z}\mc C_{1,\alpha,z}^{-1}, \quad
z\in\bbC\backslash\bbR.
\end{equation}

\begin{theorem} \label{t3.15}
Assume Hypothesis \ref{h3.9} and suppose that $\alpha \in \cB(\cH)$ is self-adjoint. Then 
\begin{equation}
m_{\alpha}(z) \in \cB(\cH), \quad z\in\bbC\backslash\bbR,   \lb{3.57B}
\end{equation}
and $m_{\alpha}(\cdot)$ is analytic on $\bbC\backslash\bbR$. Moreover,
\begin{equation}
m_{\alpha}(z)=m_{\alpha}(\ol z)^*, \quad z \in \bbC\backslash\bbR.    \lb{3.59A}
\end{equation}
\end{theorem}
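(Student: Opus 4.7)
The plan is to dispatch the three assertions---membership in $\cB(\cH)$, analyticity on $\bbC\backslash\bbR$, and the symmetry $m_{\alpha}(z)=m_{\alpha}(\bar z)^*$---in that order. For \eqref{3.57B} I would simply read off from Lemma \ref{l3.14} that both $\mc C_{2,\alpha,z}$ and $\mc C_{1,\alpha,z}^{-1}$ are bounded, so their product $m_{\alpha}(z)=\mc C_{2,\alpha,z}\mc C_{1,\alpha,z}^{-1}$ automatically lies in $\cB(\cH)$.

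For analyticity, the first step is to tie the abstract definition \eqref{3.57A} back to the concrete parametrization \eqref{3.49A}. By construction of $\psi_{\alpha}$ and of the bounded invertibility of $C_{1,\alpha}(c,z)$ supplied by Lemma \ref{l3.13}, the function $\psi_{\alpha}(z,\cdot)f$ lies in $\cD_z$ for every $f\in\cH$, and its decomposition in the basis $\{\theta_{\alpha}(z,\cdot,a),\phi_{\alpha}(z,\cdot,a)\}$ is precisely $f$ and $C_{2,\alpha}(c,z)C_{1,\alpha}(c,z)^{-1}f$; therefore $\mc C_{1,\alpha,z}^{-1}f=\psi_{\alpha}(z,\cdot)f$ and
\[
m_{\alpha}(z)=C_{2,\alpha}(c,z)\,C_{1,\alpha}(c,z)^{-1}
\]
for any $c\in(a,b)$ for which the right-hand inverse exists. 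Given $z_0\in\bbC\backslash\bbR$, Lemma \ref{l3.13} furnishes such a $c$, and the openness of the set of invertible operators in $\cB(\cH)$, together with the entireness of $C_{1,\alpha}(c,\cdot)$ and the analyticity of $C_{2,\alpha}(c,\cdot)$ on $\rho(H_{\alpha})\supset\bbC\backslash\bbR$, then yield analyticity of $m_{\alpha}$ near $z_0$.

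The main work lies in \eqref{3.59A}, and the plan is to compute the Wronskian of the two Weyl solutions $u=\psi_{\alpha}(\bar z,\cdot)g$ and $v=\psi_{\alpha}(z,\cdot)f$ at the endpoint $a$ in two ways. On the one hand, Green's formula \eqref{3.17A} applied to $u,v\in\dom(\oT_{\max})$, combined with $\oT_{\max}u=\bar z u$ and $\oT_{\max}v=zv$---so that the two $L^2$ inner products on the left cancel because $(\cdot,\cdot)_\cH$ is linear in the second entry---and with the vanishing of $W_{*}(u,v)(b)$ granted by the limit-point hypothesis at $b$, forces $W_{*}(u,v)(a)=0$. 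On the other hand, the initial values
\[
\psi_{\alpha}(z,a)=\cos(\alpha)-\sin(\alpha)\,m_{\alpha}(z),\quad \psi_{\alpha}'(z,a)=\sin(\alpha)+\cos(\alpha)\,m_{\alpha}(z),
\]
(read off from \eqref{2.5} and the identity $m_{\alpha}(z)=C_{2,\alpha}(c,z)C_{1,\alpha}(c,z)^{-1}$) and their analogues at $\bar z$ give, after a short expansion and the use of $\sin^2(\alpha)+\cos^2(\alpha)=I_\cH$,
\[
W_{*}(u,v)(a)=\big(g,[m_{\alpha}(z)-m_{\alpha}(\bar z)^*]\,f\big)_\cH,
\]
so that arbitrariness of $f,g\in\cH$ yields \eqref{3.59A}. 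The one point in the argument that requires a little care is the algebraic cancellation of the cross terms $m_{\alpha}(\bar z)^*\sin(\alpha)\cos(\alpha)m_{\alpha}(z)$, which relies crucially on $[\sin(\alpha),\cos(\alpha)]=0$; I expect this to be the only nontrivial step, the rest of the computation being entirely routine.
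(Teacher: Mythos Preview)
Your proposal is correct and follows essentially the same route as the paper. The only organizational difference is in the proof of \eqref{3.59A}: you evaluate $W_*(\psi_\alpha(\bar z,\cdot)g,\psi_\alpha(z,\cdot)f)$ at $x=a$ directly from the initial values \eqref{2.5} and invoke Green's formula \eqref{3.17A}, whereas the paper computes the (constant) operator-valued Wronskian $W(\psi_\alpha(\bar z,\cdot)^*,\psi_\alpha(z,\cdot))$ via the pre-established identities \eqref{2.7f}--\eqref{2.7i} and then sandwiches between $f$ and $g$ before letting $x\uparrow b$; the two computations amount to the same algebra, and both rely on the limit-point hypothesis at $b$ to close the argument.
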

\begin{proof}
The boundedness relation \eqref{3.57B} immediately follows from \eqref{3.57} and \eqref{3.57A}. To prove analyticity we first show that 
$m_{\alpha}(z)=C_{2,\alpha}(c,z)C_{1,\alpha}^{-1}(c,z)$ where $C_{1,\alpha}$, 
$C_{2,\alpha}$ and $c$ are as in Lemma \ref{l3.13}. To this end let $h$ be an arbitrary element of $\cH$. Then
\begin{align}
\mc C_{2,\alpha,z}\mc C_{1,\alpha,z}^{-1}h
& = \mc C_{2,\alpha,z} \psi_{\alpha}(z, \cdot) h  \no \\
& =\mc C_{2,\alpha,z}(\theta_\alpha(z,\cdot,a)h
+ \phi_\alpha(z,\cdot,a)C_{2,\alpha}(c,z)C_{1,\alpha}(c,z)^{-1}h)    \no \\
& =C_{2,\alpha}(c,z)C_{1,\alpha}(c,z)^{-1}h    \lb{3.60A}
\end{align}
establishing the claimed identity. The analyticity of $m_{\alpha}$ on
$\bbC\backslash\bbR$ now follows from Lemma \ref{l3.13}.

To prove \eqref{3.59A} one first observes that \eqref{2.7f}--\eqref{2.7i} yield
\begin{equation}
W(\psi_{\alpha}(\ol z,\cdot)^*,\psi_{\alpha}(z,\cdot))(x)
= m_{\alpha}(z)-m_{\alpha}(\ol z)^*.
\end{equation}
Fixing arbitrary $f,g\in\cH$, then yields
\begin{equation}
(f,(m_{\alpha}(z)-m_{\alpha}(\ol z)^*)g)_\cH =
W(\psi_{\alpha}(\ol z,\cdot)^*f,\psi_{\alpha}(z,\cdot)g)(x)
\underset{x \uparrow b}{\longrightarrow} 0,
\end{equation}
since both $\psi_{\alpha}(\ol z,\cdot)f$ and $\psi_{\alpha}(z,\cdot)g$ are in
$\dom(\oT_{\max})$ and since $b$ is of limit-point-type for $\tau$.
\end{proof}

As a consequence of \eqref{3.60A}, the $\cB(\cH)$-valued function $\psi_{\alpha}(z,\cdot)$
in \eqref{3.49A} can be rewritten in the form
\begin{equation} \label{3.58A}
\psi_{\alpha}(z,x)=\theta_{\alpha}(z,x,a)+\phi_{\alpha}(z,x,a)m_{\alpha}(z),
\quad z \in \bbC\backslash\bbR, \; x \in [a,b).
\end{equation}
In particular, this implies that $\psi_{\alpha}(z,\cdot)$ is independent of
the choice of the parameter $c \in (a,b)$ in \eqref{3.49A}.
Following the tradition in the scalar case ($\dim(\cH) = 1$), we will call
$\psi_{\alpha}(z,\cdot)$ the {\it Weyl--Titchmarsh} solution associated
with $\tau Y = z Y$.

We remark that, given a function $u\in\cD_z$, the operator $m_{0}(z)$ assigns the Neumann boundary data $u'(a)$ to the Dirichlet boundary data
$u(a)$, that is, $m_{0}(z)$ is the ($z$-dependent)
Dirichlet-to-Neumann map.

With the aid of the Weyl--Titchmarsh solutions we can now give a detailed description of the resolvent
$R_{z,\alpha} = (H_{\alpha} - z I_{L^2((a,b);dx;\cH)})^{-1}$ of $H_{\alpha}$.

\begin{theorem}\label{t3.16}
Assume Hypothesis \ref{h3.9} and suppose that $\alpha \in \cB(\cH)$ is self-adjoint. Then the resolvent of $H_{\alpha}$ is an integral operator of the type
\begin{align}
\begin{split}
\big((H_{\alpha} - z I_{L^2((a,b);dx;\cH)})^{-1} u\big)(x)
= \int_a^b dx' \, G_{\alpha}(z,x,x')u(x'),& \\
u \in L^2((a,b);dx;\cH), \; z \in \rho(H_{\alpha}), \; x \in [a,b),&
\end{split}
\end{align}
with the $\cB(\cH)$-valued Green's function $G_{\alpha}(z,\cdot,\cdot) $ given by
\begin{equation} \label{3.63A}
G_{\alpha}(z,x,x') = \begin{cases}
\phi_{\alpha}(z,x,a) \psi_{\alpha}(\ol{z},x')^*, & a\leq x \leq x'<b, \\
\psi_{\alpha}(z,x) \phi_{\alpha}(\ol{z},x',a)^*, & a\leq x' \leq x<b,
\end{cases}  \quad z\in\bbC\backslash\bbR.
\end{equation}
\end{theorem}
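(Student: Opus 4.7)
The plan is to define the operator $T_z$ on $L^2((a,b);dx;\cH)$ via the proposed Green's function, show that $T_z u\in\dom(H_{\alpha})$ with $(H_{\alpha}-zI)T_z u=u$ for all $u$ in a dense subspace, and conclude $T_z=R_{z,\alpha}$ by boundedness and uniqueness of the resolvent on $\rho(H_{\alpha})\supseteq\bbC\bs\bbR$. Because the right-hand side of \eqref{3.63A} uses $\phi_{\alpha}(z,\cdot,a)$ for $x\leq x'$ and $\psi_{\alpha}(z,\cdot)$ for $x'\leq x$, one can rewrite
\begin{align*}
(T_z u)(x) &= \psi_{\alpha}(z,x) \int_a^x dx' \, \phi_{\alpha}(\ol z,x',a)^* u(x') \\
& \quad + \phi_{\alpha}(z,x,a) \int_x^b dx' \, \psi_{\alpha}(\ol z,x')^* u(x'),
\end{align*}
which, via \eqref{3.58A} and the Wronskian identities \eqref{2.7j}--\eqref{2.7m}, is precisely the variation of constants ansatz from Lemma \ref{l2.8} centered at $a$ (with the $\theta_{\alpha}$-coefficient adjusted so that the boundary condition at $a$ holds and the $\phi_{\alpha}$-coefficient adjusted so that $T_z u$ is square integrable near $b$).

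I would first fix $u\in L^2((a,b);dx;\cH)$ of compact support in $[a,c]\subset[a,b)$. On $[a,c]$ both integrals are over bounded sets and the integrands $\phi_{\alpha}(\ol z,x',a)^*u(x')$ and $\psi_{\alpha}(\ol z,x')^*u(x')$ are strongly measurable and integrable by Lemma \ref{l2.1} (using that $\phi_{\alpha}(\ol z,\cdot,a)^*$ and $\psi_{\alpha}(\ol z,\cdot)^*$ are strongly continuous in $\cB(\cH)$ by Corollary \ref{c2.5}\,$(i)$). Hence $T_z u$ is well defined, strongly continuous on $[a,b)$, and on $[c,b)$ one has $(T_z u)(x)=\psi_{\alpha}(z,x)\int_a^c dx' \, \phi_{\alpha}(\ol z,x',a)^* u(x')$, so square integrability near $b$ follows from the fact (established before Theorem \ref{t3.15}) that $\psi_{\alpha}(z,\cdot)f\in L^2((a,b);dx;\cH)$ for every $f\in\cH$. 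A direct differentiation using Lemma \ref{l2.8} shows that $T_z u\in W^{2,1}_{\loc}((a,b);dx;\cH)$ and $(\tau-z)T_z u=u$ almost everywhere, so $T_z u\in\dom(\oT_{\max})$. Evaluating at $x=a$ and using $\phi_{\alpha}(z,a,a)=-\sin(\alpha)$, $\phi'_{\alpha}(z,a,a)=\cos(\alpha)$ from \eqref{2.5}, one computes with $C:=\int_a^b dx' \, \psi_{\alpha}(\ol z,x')^* u(x')$ that
\begin{equation*}
(T_z u)(a)=-\sin(\alpha)C,\qquad (T_z u)'(a)=\cos(\alpha)C,
\end{equation*}
so $\sin(\alpha)(T_z u)'(a)+\cos(\alpha)(T_z u)(a)=[\sin(\alpha)\cos(\alpha)-\cos(\alpha)\sin(\alpha)]C=0$ by commutativity of $\sin(\alpha)$ and $\cos(\alpha)$. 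Hence $T_z u\in\dom(H_{\alpha})$ and $(H_{\alpha}-zI)T_z u=u$, that is, $T_z u=R_{z,\alpha}u$ for every compactly supported $u$.

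To finish, I would show $T_z\in\cB(L^2((a,b);dx;\cH))$, from which density of the compactly supported functions will promote the identity $T_z=R_{z,\alpha}$ to all of $L^2((a,b);dx;\cH)$. The uniform bound on $T_z$ can be read off from the resolvent identity just established on the dense subspace: since $R_{z,\alpha}\in\cB(L^2((a,b);dx;\cH))$ for $z\in\rho(H_{\alpha})$ and $T_z$ agrees with it on a dense subspace, it extends uniquely to the same bounded operator. The main obstacle I anticipate is the careful bookkeeping for the boundary term at $b$: one must verify that the cut-off in the $\psi_{\alpha}$-integral does not produce an unwanted boundary contribution, which amounts to using the limit-point assumption through the $L^2$-behavior of $\psi_{\alpha}(z,\cdot)$ near $b$ (and, implicitly, the vanishing of Wronskians at $b$ between elements of $\dom(\oT_{\max})$, cf.\ Definition \ref{d3.6} and \eqref{3.17A}). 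Everything else is a routine application of Lemmas \ref{l2.1}, \ref{l2.8}, and the Wronskian identities \eqref{2.7j}--\eqref{2.7m}.
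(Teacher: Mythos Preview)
Your proposal is correct and follows essentially the same approach as the paper's own proof: define $v=T_z u$ via the Green's kernel, check for compactly supported $u$ that $v\in\dom(H_\alpha)$ with $(\tau-z)v=u$ (boundary condition at $a$ from the $\phi_\alpha$-initial values, square integrability near $b$ because $v$ equals $\psi_\alpha(z,\cdot)$ times a constant past $\supp(u)$), then extend by density. The paper argues more tersely---it simply notes that near each endpoint $v$ is a fixed $\cH$-multiple of $\phi_\alpha$ or $\psi_\alpha$ and differentiates---whereas you spell out the initial-value computation at $a$ and invoke Lemmas~\ref{l2.1} and~\ref{l2.8} explicitly; the ``obstacle'' you flag at $b$ is in fact already resolved by the compact-support reduction and is not needed for the density argument.
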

\begin{proof}
First assume that $u\in L^2((a,b);dx;\cH)$ is compactly supported and let
\begin{equation}
v(x)=\psi_{\alpha}(z,x)\int_a^x \phi_{\alpha}(\ol{z},x',a)^* u(x') dx' +\phi_{\alpha}(z,x,a) \int_x^b \psi_{\alpha}(\ol{z},x')^* u(x')dx'.
\end{equation}
We need to show that $v=R_{z,\alpha}u$. To this end one notes that both $v$ and $v'$ are in $W^{(1,1)}_{\rm loc}((a,b),dx;\cH)$. Near the endpoints $v$ is a multiple of either $\phi_{\alpha}(z,\cdot,a)$ or $\psi_{\alpha}(z,\cdot)$. Hence it satisfies the boundary condition at $a$ and is square integrable. Differentiating once more shows that $\tau v=u$ so that 
$v\in L^2((a,b);dx;\cH)$ and $v=R_{z,\alpha}u$. The fact that compactly supported functions are dense in $L^2((a,b);dx;\cH)$ completes the proof.
\end{proof}

One recalls from Definition \ref{dA.4} that a nonconstant function 
$N:\bb C_+\to\cB(\cH)$ is called a (bounded) operator-valued Herglotz function, if 
$z\mapsto (u,N(z)u)_{\cH}$ is analytic and 
has a non-negative imaginary part for all $u\in\cH$.

\begin{theorem} \label{t3.17}
Assume Hypothesis \ref{h3.9} and suppose that $\alpha \in \cB(\cH)$ and
$\beta \in \cB(\cH)$ are self-adjoint.
Then the $\cB(\cH)$-valued function $m_{\alpha}(\cdot)$ 
is an operator-valued Herglotz function and explicitly determined by the Green's function
for $H_{\alpha}$ as follows,
\begin{align}
& m_{\alpha}(z) = \big(-\sin(\alpha), \cos(\alpha)\big)
\begin{pmatrix} G_{\alpha}(z,a,a) & G_{\alpha,x'} (z,a,a) \\
G_{\alpha, x} (z,a,a) & G_{\alpha, x,x'} (z,a,a) \end{pmatrix}
\begin{pmatrix} -\sin(\alpha) \\ \cos(\alpha) \end{pmatrix},    \no \\
& \hspace*{9.5cm}   z\in\bbC\backslash\bbR,     \label{2.19a}
\end{align}
where we denoted
\begin{align}
G_{\alpha,x} (z,a,a) &= \slim_{\substack{x' \to a \\ a<x<x'}}
\frac{\partial}{\partial x} G_{\alpha} (z,x,x'),   \no   \\
G_{\alpha,x'} (z,a,a) &= \slim_{\substack{x' \to a \\ a<x<x'}}
\frac{\partial}{\partial x'} G_{\alpha} (z,x,x'),     \\
G_{\alpha,x,x'} (z,a,a) &= \slim_{\substack{x' \to a \\ a<x<x'}}
\frac{\partial}{\partial x} \frac{\partial}{\partial x'} G_{\alpha} (z,x,x')    \no
\end{align}
$($the strong limits referring to the strong operator topology in $\cH$$)$.
In addition, $m_{\alpha}(\cdot)$ extends analytically to the resolvent set
of $H_{\alpha}$.

Moreover, $m_{\alpha}(\cdot)$ and $m_{\beta}(\cdot) $ are related by the
following linear fractional transformation,
\begin{equation}\label{3.67A}
m_{\beta}=(C+Dm_{\alpha})(A+Bm_{\alpha})^{-1},
\end{equation}
where
\begin{equation}
\begin{pmatrix}A&B\\ C&D\end{pmatrix}
= \begin{pmatrix}\cos(\beta) & \sin(\beta) \\ -\sin(\beta) & \cos(\beta) \end{pmatrix}
\begin{pmatrix}\cos(\alpha) & -\sin(\alpha) \\ \sin(\alpha) & \cos(\alpha) \end{pmatrix}.
\end{equation}
\end{theorem}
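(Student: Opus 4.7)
For the Herglotz property I first note that Theorem~\ref{t3.15} already yields analyticity of $m_\alpha(\cdot)$ on $\bbC\setminus\bbR$, so it suffices to verify $\Im(m_\alpha(z))\geq 0$ for $z\in\bbC_+$. Fix $h\in\cH$ and set $u=\psi_\alpha(z,\cdot)h$; then $u\in\dom(\oT_{\max})$ and $\tau u=zu$. Applying Green's formula \eqref{2.52A} on $[a,c]$, sending $c\uparrow b$, and using the limit-point hypothesis at $b$ to discard the boundary contribution there gives $(\bar z-z)\|u\|_{L^2((a,b);dx;\cH)}^2=-W_*(u,u)(a)$. Inserting $\psi_\alpha(z,a)=\cos(\alpha)-\sin(\alpha)m_\alpha(z)$ and $\psi_\alpha'(z,a)=\sin(\alpha)+\cos(\alpha)m_\alpha(z)$ and invoking $[\sin(\alpha),\cos(\alpha)]=0$ together with $\sin^2(\alpha)+\cos^2(\alpha)=I_\cH$ collapses $\psi_\alpha(z,a)^*\psi_\alpha'(z,a)-\psi_\alpha'(z,a)^*\psi_\alpha(z,a)$ to $m_\alpha(z)-m_\alpha(z)^*$, so that $W_*(u,u)(a)=2i(h,\Im(m_\alpha(z))h)_\cH$. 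Consequently
\begin{equation}
(h,\Im(m_\alpha(z))h)_\cH=\Im(z)\,\|\psi_\alpha(z,\cdot)h\|_{L^2((a,b);dx;\cH)}^2\geq 0,
\end{equation}
and non-constancy is immediate because $\psi_\alpha(z,\cdot)h\equiv 0$ forces $h=0$ through the two initial values above.

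For \eqref{2.19a} and the analytic extension to $\rho(H_\alpha)$, I use \eqref{3.63A} for $a\leq x\leq x'$, exploit \eqref{3.59A} to identify $\psi_\alpha(\bar z,a)^*=\cos(\alpha)-m_\alpha(z)\sin(\alpha)$ and $\psi_\alpha'(\bar z,a)^*=\sin(\alpha)+m_\alpha(z)\cos(\alpha)$, and invoke the strong continuity of $\phi_\alpha,\phi_\alpha',\psi_\alpha,\psi_\alpha'$ in $x$ from Corollary~\ref{c2.5} to evaluate the four strong limits as
\begin{equation}
\begin{aligned}
G_\alpha(z,a,a)&=-\sin(\alpha)\cos(\alpha)+\sin(\alpha)m_\alpha(z)\sin(\alpha),\\
G_{\alpha,x'}(z,a,a)&=-\sin^2(\alpha)-\sin(\alpha)m_\alpha(z)\cos(\alpha),\\
G_{\alpha,x}(z,a,a)&=\cos^2(\alpha)-\cos(\alpha)m_\alpha(z)\sin(\alpha),\\
G_{\alpha,x,x'}(z,a,a)&=\cos(\alpha)\sin(\alpha)+\cos(\alpha)m_\alpha(z)\cos(\alpha).
\end{aligned}
\end{equation}
Expanding the quadratic form on the right-hand side of \eqref{2.19a} and using $[\sin(\alpha),\cos(\alpha)]=0$ to cancel the four non-$m_\alpha$ contributions in pairs, while collapsing the four $m_\alpha$-linear contributions by writing them as $(\sin^2(\alpha)+\cos^2(\alpha))\,m_\alpha(z)\,(\sin^2(\alpha)+\cos^2(\alpha))=m_\alpha(z)$, establishes \eqref{2.19a}. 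Since $R_{z,\alpha}$, and hence $G_\alpha(z,\cdot,\cdot)$ together with its relevant strong partial derivatives at $x=x'=a$, depends analytically on $z\in\rho(H_\alpha)$, identity \eqref{2.19a} furnishes the sought analytic continuation of $m_\alpha$ from $\bbC\setminus\bbR$ to $\rho(H_\alpha)$.

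For the linear fractional transformation, Lemma~\ref{l3.14} implies that both $h\mapsto\psi_\alpha(z,\cdot)h$ and $h\mapsto\psi_\beta(z,\cdot)h$ are bounded bijections from $\cH$ onto $\cD_z$; composing yields $N(z)\in\cB(\cH)$ with bounded inverse such that $\psi_\beta(z,\cdot)=\psi_\alpha(z,\cdot)N(z)$. Evaluating this identity and its strong $x$-derivative at $x=a$ produces
\begin{equation}
\begin{pmatrix}\cos(\beta) & -\sin(\beta)\\ \sin(\beta) & \cos(\beta)\end{pmatrix}\begin{pmatrix}I_\cH\\ m_\beta(z)\end{pmatrix}=\begin{pmatrix}\cos(\alpha) & -\sin(\alpha)\\ \sin(\alpha) & \cos(\alpha)\end{pmatrix}\begin{pmatrix}N(z)\\ m_\alpha(z)N(z)\end{pmatrix}.
\end{equation}
Left-multiplying by the inverse $\begin{pmatrix}\cos(\beta) & \sin(\beta)\\ -\sin(\beta) & \cos(\beta)\end{pmatrix}$ of the $\beta$-rotation block reduces this to the pair of operator equations $(A+Bm_\alpha(z))N(z)=I_\cH$ and $(C+Dm_\alpha(z))N(z)=m_\beta(z)$. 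Combined with the a priori invertibility of $N(z)$ in $\cB(\cH)$, the first relation gives $A+Bm_\alpha(z)=N(z)^{-1}\in\cB(\cH)$, and substituting into the second yields \eqref{3.67A}. The principal technical obstacle is the noncommutative algebra underlying \eqref{2.19a}: one must carefully track the non-commutativity of $m_\alpha(z)$ with $\sin(\alpha)$ and $\cos(\alpha)$ while systematically applying $[\sin(\alpha),\cos(\alpha)]=0$ and $\sin^2(\alpha)+\cos^2(\alpha)=I_\cH$ to reduce the sixteen resulting operator products to a single copy of $m_\alpha(z)$.
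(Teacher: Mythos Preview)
Your proof is correct, and your handling of \eqref{2.19a} and the linear fractional transformation \eqref{3.67A} is essentially the same as the paper's (the paper dismisses \eqref{2.19a} as ``a simple calculation'' and derives \eqref{3.67A} by the same change-of-basis argument you use, identifying $A+Bm_\alpha(z)=\mc C_{1,\beta,z}\mc C_{1,\alpha,z}^{-1}$ for invertibility, which is your $N(z)^{-1}$).

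Where your argument differs substantively is in the Herglotz property. The paper does \emph{not} argue directly via Green's formula applied to $\psi_\alpha(z,\cdot)h$. Instead it first fixes the special boundary condition $\alpha_0=\tfrac{\pi}{2}I_\cH$, takes $u_\delta=u_0\chi_{[a,a+\delta]}/\delta^{1/2}$, and shows that $(u_\delta,R_{z,\pi/2}u_\delta)_{L^2}$ converges to $(u_0,m_{\pi/2}(z)u_0)_\cH$ as $\delta\downarrow 0$; the nonnegativity of the imaginary part then comes from the spectral resolution of the resolvent. The Herglotz property for general $\beta$ is then \emph{inherited} from $m_{\pi/2}$ via the linear fractional transformation \eqref{3.67A}, using the identity $\Im((u_0,m_\beta u_0)_\cH)=\Im((v_0,m_{\pi/2}v_0)_\cH)$ with $v_0=(A+Bm_{\pi/2})^{-1}u_0$. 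Your route---compute $W_*(u,u)(a)$ directly from the boundary values of $\psi_\alpha$ and reduce to $(h,\Im(m_\alpha(z))h)_\cH=\Im(z)\|\psi_\alpha(z,\cdot)h\|^2$---is more elementary and more informative: it produces the explicit formula for $\Im(m_\alpha(z))$ in one stroke and treats all $\alpha$ uniformly, at the cost of bypassing the connection between $m_\alpha$ and the spectral measure of $H_\alpha$ that the paper's resolvent argument implicitly sets up.
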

\begin{proof}
Pick $z\in\bbC\backslash\bbR$ throughout this proof.  
We begin by establishing the validity of the linear fractional transformation. Let $\psi$ be any $\cH$-valued square integrable solution of $\tau\psi=z\psi$. Since
\begin{equation}
\psi(x)=\theta_{\alpha}(z,\cdot,a)f+\phi_{\alpha}(z,\cdot,a)g = \theta_{\beta}(z,\cdot,a)u+\phi_{\beta}(z,\cdot,a)v
\end{equation}
for appropriate $f,g,u,v\in\cH$, one gets
\begin{equation}
\begin{pmatrix}u\\ v\end{pmatrix}=\begin{pmatrix}A&B\\ C&D\end{pmatrix}\begin{pmatrix}f\\ g\end{pmatrix}.
\end{equation}
Since $v=m_{\beta}u$, $g=m_{\alpha}f$, and since $A+B m_{\alpha}(z)=\mc C_{1,\beta,z}\mc C_{1,\alpha,z}^{-1}$ is invertible, one obtains \eqref{3.67A}.

In view of this relationship between $m$-operators for different boundary conditions we prove the first part of the theorem first for a specific boundary condition, namely
$\alpha_0=\frac\pi2 I_{\cH}$ so that $\sin(\alpha_0)=I_{\cH}$ and $\cos(\alpha_0)=0$.
Then, for every $\varepsilon > 0$ there is a $\delta > 0$ such that
$\|\theta_{\pi/2}(a,z,x)\|_{\cB(\cH)}$ and $\|\phi_{\pi/2}(a,z,x)-I_{\cH}\|_{\cB(\cH)}$ are smaller than
$\varepsilon$ provided $x-a<\delta$. Next, for any fixed $u_0\in\cH$ let
$u_{\delta}=u_0 \chi_{[a,a+\delta]}/\delta^{1/2}$. Using Theorems \ref{t3.15} and \ref{t3.16}, one obtains
\begin{align} \label{3.71A}
& (u_{\delta}, R_{z,\pi/2} u_{\delta})_{L^2((a,b);dx;\cH)}   \no \\
& \quad = \int_a^{a + \delta} dx \, \bigg\{
\big(u_{\delta}(x),\theta_{\pi/2}(z,x,a)\int_a^x dx' \,
\phi_{\pi/2}(\ol z,x',a)^* u_{\delta}(x')\big)_{\cH}  \no \\
 & \qquad + \big(u_{\delta}(x),\phi_{\pi/2}(z,x,a)\int_x^b dx' \,
 \theta_{\pi/2}(\ol z,x',a)^* u_{\delta}(x')\big)_{\cH}
 \no \\
& \qquad +\big(u_{\delta}(x),[\phi_{\pi/2}(z,x,a) - I_{\cH}]
m_{\pi/2}(z) \int_a^b dx' \,
\phi_{\pi/2}(\ol z,x',a)^* u(x')\big)_{\cH}  \no \\
& \qquad + \big(u(x),m_{\pi/2}(z) \int_a^b dx' \,
(\phi_{\pi/2}(\ol z,x',a)^*-I) u_{\delta}(x')\big)_{\cH}\bigg\}
\no \\
& \qquad +(u_0,m_{\pi/2}(z)u_0)_\cH.
\end{align}
Hence,
\begin{align}
\begin{split}
& |(u_0,m_{\pi/2}(z)u_0)_\cH-(u_{\delta}, R_{z,\pi/2} u_{\delta})_{L^2((a,b);dx;\cH)}|  \\
& \quad \leq (\varepsilon (1+2\|m_{\pi/2}(z)\|)
+\varepsilon^2(1+\|m_{\pi/2}(z)\|))\|u_0\|^2.
\end{split}
\end{align}
Since $\delta$ goes to zero with $\varepsilon$ one gets
\begin{align}
\begin{split}
& \Im\big((u_0,m_{\pi/2}(z)u_0)_\cH\big)
= \lim_{\delta\downarrow 0}
\Im \big((u_{\delta}, R_{z,\pi/2} u_{\delta})_{L^2((a,b);dx;\cH)}\big)   \\
& \quad = \Im (z) \lim_{\delta\downarrow 0} \int_\bb R
\frac{d(u_{\delta}, E_{H_{\pi/2}}(-\infty,t] u_{\delta})_{L^2((a,b);dx;\cH)}}{|t-z|^2} 
\geq 0,
\end{split}
\end{align}
where $E_{H_{\pi/2}}(\cdot)$ denotes the strongly right-continuous family of spectral projections  associated with $H_{\pi/2}$. Since we already showed that $m_{\pi/2}$ is analytic away from the real axis, it follows that it is an operator-valued Herglotz function.

It remains to show that $m_{\beta}$ possesses the Herglotz property for general 
$\beta$. Using \eqref{3.67A} for $\alpha=\pi/2$ and setting
$v_0=(A+Bm_{\pi/2})^{-1}u_0$ for an arbitrary element $u_0$ of $\cH$ one finds
\begin{align}
2i\Im\big((u_0,m_{\beta}u_0)_\cH\big)
&= (u_0,m_{\beta}u_0)_\cH-(m_{\beta}u_0,u_0)_\cH  \no \\
&= (v_0,m_{\pi/2}v_0)_\cH-(m_{\pi/2} v_0,v_0)_\cH  \no \\
&= 2i\Im \big((v_0,m_{\pi/2}v_0)_\cH\big) \geq 0,
\end{align}
proving that $m_{\beta}$ is Herglotz.

Finally, \eqref{2.19a} follows by a simple calculation.
\end{proof}

We also mention that $G_{\alpha}(\cdot,x,x)$ is a bounded Herglotz operator 
in $\cH$ for each $x\in (a,b)$, as is clear from \eqref{2.7j}, \eqref{3.58A}, 
\eqref{3.63A}, and the Herglotz property of $m_{\alpha}$. 

\begin{remark} \lb{r3.19}
The Weyl--Titchmarsh theory established in this section is modeled after 
right half-lines $(a,b) = (0, \infty)$. Of course precisely the analogous theory applies 
to left half-lines $(-\infty,0)$. Given the two half-line results, one then establishes 
the full-line result on $\bbR$ in the usual fashion with $x=0$ a reference point and a 
$2 \times 2$ block operator formalism as in well-known the scalar or matrix-valued 
cases; we omit further details at this point.  
\end{remark} 

\appendix
\section{Basic Facts on Operator-Valued Herglotz Functions} \lb{sA}
\setcounter{theorem}{0}
\setcounter{equation}{0}

In this appendix we review some basic facts on (bounded) operator-valued Herglotz functions, applicable to $m_{\alpha}$ and $G_{\alpha}(\cdot,x,x)$, $x\in (a,b)$, 
discussed in the bulk of this paper.

In the remainder of this appendix, let $\cH$ be a separable, complex Hilbert space
with inner product denoted by $(\cdot,\cdot)_{\cH}$.

\begin{definition}\label{dA.4}
The map $M: \bbC_+ \rightarrow \cB(\cH)$ is called a bounded
operator-valued Herglotz function in $\cH$ (in short, a bounded Herglotz operator in
$\cH$) if $M$ is analytic on $\bbC_+$ and $\Im (M(z))\geq 0$ for all $z\in \bbC_+$.
\end{definition}

Here we follow the standard notation
\begin{equation} \lb{A.37}
\Im (M) = (M-M^*)/(2i),\quad \Re (M) = (M+M^*)/2, \quad M \in \cB(\cH).
\end{equation}

Note that $M$ is a bounded Herglotz operator if and only if the scalar-valued functions 
$(u,Mu)_\cH$ are Herglotz for all $u\in\cH$. 

As in the scalar case one usually extends $M$ to $\bbC_-$ by
reflection, that is, by defining
\begin{equation}
M(z)=M(\overline z)^*, \quad z\in \bbC_-.   \lb{A.36}
\end{equation}
Hence $M$ is analytic on $\bbC\backslash\bbR$, but $M\big|_{\bbC_-}$
and $M\big|_{\bbC_+}$, in general, are not analytic
continuations of each other.

Of course, one can also consider unbounded operator-valued 
Herglotz functions, but they will not be used in this paper. 

In contrast to the scalar case, one cannot generally expect strict
inequality in $\Im(M(\cdot))\geq 0$. However, the kernel of $\Im(M(\cdot))$
has simple properties:

\begin{lemma} \lb{lA.5}
Let $M(\cdot)$ be a bounded operator-valued Herglotz function in $\cH$.
Then the kernel $\cH_0 = \ker(\Im(M(z)))$ is independent of $z\in\bbC_+$.
Consequently, upon decomposing $\cH = \cH_0 \oplus \cH_1$,
$\cH_1 = \cH_0^\bot$, $\Im(M(\cdot))$ takes on the form
\begin{equation}
\Im(M(z))= \begin{pmatrix} 0 & 0 \\ 0 & N_1(z) \end{pmatrix},
\quad z \in \bbC_+,     \lb{A.38}
\end{equation}
where $N_1(\cdot) \in \cB(\cH_1)$ satisfies
\begin{equation}
N_1(z) > 0, \quad z\in\bbC_+.    \lb{A.39}
\end{equation}
\end{lemma}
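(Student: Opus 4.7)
The plan is to reduce to the scalar case via the sesquilinear form and then invoke the minimum principle for harmonic functions. For each fixed $u\in\cH$, I would consider the scalar function
\begin{equation}
f_u(z) := (u, M(z)u)_{\cH}, \quad z\in\bbC_+. \notag
\end{equation}
Since $M$ is analytic on $\bbC_+$ with $\Im(M(z))\geq 0$, $f_u$ is analytic on $\bbC_+$ with $\Im f_u(z) = (u,\Im(M(z))u)_{\cH} \geq 0$, so $\Im f_u$ is a nonnegative harmonic function on $\bbC_+$. Moreover, because $\Im(M(z))$ is a bounded nonnegative self-adjoint operator, the Cauchy--Schwarz inequality applied to the sesquilinear form $(v,w)\mapsto(v,\Im(M(z))w)_{\cH}$ yields the standard equivalence
\begin{equation}
u\in\ker(\Im(M(z))) \; \Longleftrightarrow \; (u,\Im(M(z))u)_{\cH}=0 \; \Longleftrightarrow \; \Im f_u(z)=0. \notag
\end{equation}

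Next, I would establish $z$-independence of the kernel using the strong minimum principle. Suppose $u\in\ker(\Im(M(z_0)))$ for some $z_0\in\bbC_+$. Then $\Im f_u$ is a nonnegative harmonic function on the connected open set $\bbC_+$ attaining its minimum value $0$ at the interior point $z_0$. The strong minimum principle (equivalently, the mean value property) forces $\Im f_u\equiv 0$ on $\bbC_+$, which by the equivalence above means $u\in\ker(\Im(M(z)))$ for every $z\in\bbC_+$. Interchanging the roles of $z_0$ and $z$ gives the reverse inclusion, so $\cH_0:=\ker(\Im(M(z)))$ is the same subspace for all $z\in\bbC_+$, and in particular $\cH_0$ is closed.

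To obtain the block decomposition, I would use self-adjointness of $\Im(M(z))$: for $u\in\cH_1=\cH_0^{\bot}$ and $v\in\cH_0$,
\begin{equation}
(v,\Im(M(z))u)_{\cH} = (\Im(M(z))v,u)_{\cH} = 0,  \notag
\end{equation}
so $\Im(M(z))$ leaves $\cH_1$ invariant. Combined with $\Im(M(z))\big|_{\cH_0}=0$, this yields the block form \eqref{A.38} with $N_1(z):=\Im(M(z))\big|_{\cH_1}\in\cB(\cH_1)$. By construction $N_1(z)\geq 0$, and since $\ker(N_1(z)) = \ker(\Im(M(z)))\cap\cH_1 = \cH_0\cap\cH_1=\{0\}$, one obtains $N_1(z)>0$ in the sense of \eqref{A.39}.

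The only genuinely non-trivial ingredient is the strong minimum principle step that upgrades pointwise vanishing of $\Im f_u$ at $z_0$ to identical vanishing on $\bbC_+$; the remaining statements are routine bookkeeping with self-adjoint operators and the orthogonal decomposition $\cH=\cH_0\oplus\cH_1$.
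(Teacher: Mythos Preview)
Your proof is correct. The reduction to scalar Herglotz functions $f_u(z)=(u,M(z)u)_{\cH}$, the identification $\Im f_u(z)=(u,\Im(M(z))u)_{\cH}$, the equivalence $u\in\ker(\Im(M(z)))\Leftrightarrow(u,\Im(M(z))u)_{\cH}=0$ (via $\|\Im(M(z))^{1/2}u\|_{\cH}^2=(u,\Im(M(z))u)_{\cH}$), and the application of the strong minimum principle to the nonnegative harmonic function $\Im f_u$ on the connected domain $\bbC_+$ are all sound. The block decomposition and the strict positivity of $N_1(z)$ on $\cH_1$ follow exactly as you wrote.

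Note that the paper does not actually supply its own proof of this lemma; it merely cites \cite[Proposition\ 1.2\,$(ii)$]{DM97} and remarks that the proof of \cite[Lemma\ 5.3]{GT00} in the matrix-valued context extends to infinite dimensions. Your argument is precisely the kind of proof one would expect those references to contain: the matrix-valued version in \cite{GT00} uses the same idea of passing to diagonal entries $(u,M(z)u)_{\cH}$ and invoking the rigidity of nonnegative harmonic functions (equivalently, of scalar Herglotz functions) at an interior zero of the imaginary part. So while there is no ``paper's proof'' to compare against, your approach is the standard one and matches what the cited literature does.
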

For a proof of Lemma \ref{lA.5} see, for instance,
\cite[Proposition\ 1.2\,$(ii)$]{DM97} (alternatively, the
proof of \cite[Lemma\ 5.3]{GT00} in the matrix-valued context extends to the present
infinite-dimensional situation).

Next we recall the definition of a bounded operator-valued measure (see, also
\cite[p.\ 319]{Be68}, \cite{MM04}, \cite{PR67}):

\begin{definition} \lb{dA.6}
Let $\cH$ be a separable, complex Hilbert space.
A map $\Sigma:\mathfrak{B}(\bbR) \to\cB(\cH)$, with $\mathfrak{B}(\bbR)$ the
Borel $\sigma$-algebra on $\bbR$, is called a {\it bounded, nonnegative,
operator-valued measure} if the following conditions $(i)$ and $(ii)$ hold: \\
$(i)$ $\Sigma (\emptyset) =0$ and $0 \leq \Sigma(B) \in \cB(\cH)$ for all
$B \in \mathfrak{B}(\bbR)$. \\
$(ii)$ $\Sigma(\cdot)$ is strongly countably additive (i.e., with respect to the
strong operator  \hspace*{5mm} topology in $\cH$), that is,
\begin{align}
& \Sigma(B) = \slim_{N\to \infty} \sum_{j=1}^N \Sigma(B_j)   \lb{A.40} \\
& \quad \text{whenever } \, B=\bigcup_{j\in\bbN} B_j, \, \text{ with } \,
B_k\cap B_{\ell} = \emptyset \, \text{ for } \, k \neq \ell, \;
B_k \in \mathfrak{B}(\bbR), \; k, \ell \in \bbN.    \no
\end{align}
In addition, $\Sigma(\cdot)$ is called an {\it $($operator-valued\,$)$ spectral
measure} (or an {\it orthogonal operator-valued measure}) if the following
condition $(iii)$ holds: \\
$(iii)$ $\Sigma(\cdot)$ is projection-valued (i.e., $\Sigma(B)^2 = \Sigma(B)$,
$B \in \mathfrak{B}(\bbR)$) and $\Sigma(\bbR) = I_{\cH}$. \\ 
$(iv)$ Let $f \in \cH$ and $B \in \mathfrak{B}(\bbR)$. Then the vector-valued 
measure $\Sigma(\cdot) f$ has {\it finite variation on $B$}, denoted by 
$V(\Sigma f;B)$, if 
\begin{equation}
V(\Sigma f; B) = \sup\bigg\{\sum_{j=1}^N \|\Sigma(B_j)f\|_{\cH} \bigg\} < \infty,
\end{equation}
where the supremum is taken over all finite sequences $\{B_j\}_{1\leq j \leq N}$ 
of pairwise disjoint subsets on $\bbR$ with $B_j \subseteq B$, $1 \leq j \leq N$. 
In particular, $\Sigma(\cdot) f$ has {\it finite total variation} if 
$V(\Sigma f;\bbR) < \infty$. 
\end{definition}

We recall that due to monotonicity considerations (cf., \eqref{A.52}), taking the limit 
in the strong operator topology in \eqref{A.40} is equivalent to taking the limit with 
respect to the weak operator topology in $\cH$. 

We also note that integrals of the type \eqref{A.41a}--\eqref{A.42a} below are now 
taken with respect to an operator-valued measure, as opposed to the Bochner integrals 
we used in the bulk of this paper, Sections \ref{s2} and \ref{s3}.   

For relevant material in connection with the following result we refer the reader, for instance, to \cite{AL95}, \cite{AN75}, \cite{AN76},
\cite{BT92}, \cite[Sect.\ VI.5,]{Be68}, \cite[Sect.\ I.4]{Br71}, \cite{Bu97}, \cite{Ca76},
\cite{De62}, \cite{DM91}--\cite{DM97}, \cite[Sects.\ XIII.5--XIII.7]{DS88},
\cite{HS98}, \cite{KO77}, \cite{KO78}, \cite{Ma62}, \cite{MM02}, \cite{MM04}, 
\cite[Ch.\ VI]{Na68}, \cite{Na74}, \cite{Na77}, \cite{NA75}, \cite{Sh71}, \cite{Ts92},
\cite[Sects.\ 8--10]{We87}. 

\begin{theorem}
\rm {(\cite{AN76}, \cite[Sect.\ I.4]{Br71}, \cite{Sh71}.)} \lb{tA.7}
Let $M$ be a bounded operator-valued Herglotz function in $\cH$.
Then the following assertions hold: \\
$(i)$ For each $f \in \cH$, $(f,M(\cdot) f)_{\cH}$ is a $($scalar$)$ Herglotz function. \\
$(ii)$ Suppose that $\{e_j\}_{j\in\bbN}$ is a complete orthonormal system in $\cH$ 
and that for some subset of $\bbR$ having positive Lebesgue measure, and for all 
$j\in\bbN$, $(e_j,M(\cdot) e_j)_{\cH}$ has zero normal limits. Then $M\equiv 0$. \\ 
$(iii)$ There exists a bounded, nonnegative $\cB(\cH)$-valued measure
$\Omega$ on $\bbR$ such that the Nevanlinna representation
\begin{align}
& M(z) = C + D z + \int_{\bbR}
\f{d\Omega (\lambda)}{1+\lambda^2} \, \frac{1 + \lambda z}{\lambda -z}    \lb{A.41a} \\
& \qquad \; = C + D z + \int_{\bbR} d\Omega (\lambda ) \bigg[\f{1}{\lambda-z}
- \f{\lambda}{1+\lambda ^2}\bigg],
\quad z\in\bbC_+,    \lb{A.42} \\
& \Omega((-\infty, \lambda]) = \slim_{\varepsilon \downarrow 0}
\int_{-\infty}^{\lambda + \varepsilon} \f{d \Omega (t)}{1 + t^2},  \quad 
\lambda \in \bbR,  \\
& \Omega(\bbR) = \Im(M(i)) 
= \int_{\bbR} \f{d\Omega(\lambda)}{1+\lambda^2} \in \cB(\cH),   \lb{A.42a} \\
& C=\Re(M(i)),\quad D=\slim_{\eta\uparrow \infty} \,
\frac{1}{i\eta}M(i\eta) \geq 0,      \lb{A.42b}
\end{align}
holds in the strong sense in $\cH$.  \\
$(iv)$ Let $\lambda _1,\lambda_2\in\bbR$, $\lambda_1<\lambda_2$. Then the
Stieltjes inversion formula for $\Omega $ reads
\begin{equation}\lb{A.43}
\Omega ((\lambda_1,\lambda_2]) f =\pi^{-1} \slim_{\delta\downarrow 0}
\slim_{\varepsilon\downarrow 0}
\int^{\lambda_2 + \delta}_{\lambda_1 + \delta}d\lambda \,
\Im (M(\lambda+i\varepsilon)) f, \quad f \in \cH.
\end{equation}
$(v)$ Any isolated poles of $M$ are simple and located on the
real axis, the residues at poles being nonpositive bounded operators in $\cB(\cH)$.  \\
$(vi)$ For all $\lambda \in \bbR$,
\begin{align}
& \slim_{\varepsilon \downarrow 0} \, \varepsilon
\Re(M(\lambda +i\varepsilon ))=0,    \lb{A.45} \\
& \, \Omega (\{\lambda \}) = \slim_{\varepsilon \downarrow 0} \,
\varepsilon \Im (M(\lambda + i \varepsilon )) 
=- i \slim_{\varepsilon \downarrow 0} \,
\varepsilon M(\lambda +i\varepsilon).     \lb{A.46}
\end{align} 
$(vii)$ If in addition $M(z) \in \cB_{\infty} (\cH)$, $z \in \bbC_+$, then the measure
$\Omega$ in \eqref{A.41a} is countably additive with respect to the $\cB(\cH)$-norm
and the Nevanlinna representation \eqref{A.41a}, \eqref{A.42} and the
Stieltjes inversion formula \eqref{A.43} as well as \eqref{A.45}, \eqref{A.46} hold 
with the limits taken with respect to the $\|\cdot\|_{\cB(\cH)}$-norm. \\
$(viii)$ Let $f \in \cH$ and assume in addition that $\Omega(\cdot) f$ is of finite total 
variation. Then for a.e.\ $\lambda \in \bbR$, the normal limits $M(\lambda + i0) f$ 
exist in the strong sense and 
\begin{equation}
\slim_{\varepsilon \downarrow 0} M(\lambda +i\varepsilon) f 
= M(\lambda +i 0) f = H(\Omega(\cdot) f) (\lambda) + i \pi \Omega'(\lambda) f,
\end{equation}
where $H(\Omega(\cdot) f)$ denotes the $\cH$-valued Hilbert transform
\begin{equation}
H(\Omega(\cdot) f) (\lambda) = \text{p.v.}\int_{- \infty}^{\infty} d \Omega (t) f \, 
\f{1}{t - \lambda} 
= \slim_{\delta \downarrow 0} \int_{|t-\lambda|\geq \delta} d \Omega (t) f \, 
\f{1}{t - \lambda}.
\end{equation}
\end{theorem}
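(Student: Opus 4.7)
The plan is to reduce every assertion to the classical scalar Herglotz/Nevanlinna theory applied to the sesquilinear forms $(f,M(\cdot)g)_\cH$, and then to reassemble the resulting data into operator-valued objects by polarization and the Riesz representation theorem. A central a priori input I will use repeatedly is that a bounded operator-valued Herglotz function is locally norm-bounded on $\bbC_+$ (this already follows from analyticity together with the uniform boundedness principle applied to $\|M(\cdot)f\|_\cH$ on compacta), so equi-boundedness arguments will let me upgrade pointwise scalar convergence to strong convergence in $\cH$.

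Part $(i)$ is immediate from \eqref{A.37} together with weak-operator analyticity of $M$. Part $(ii)$ follows from the scalar boundary uniqueness theorem: each $(e_j,M(\cdot)e_j)_\cH$ is a scalar Herglotz function whose normal limits vanish on a set of positive Lebesgue measure, so it is identically zero; polarization then forces all matrix elements $(e_j,M(\cdot)e_k)_\cH$ to vanish, and completeness of $\{e_j\}_{j\in\bbN}$ yields $M\equiv 0$. For $(iii)$, the scalar Nevanlinna representation of $(f,M(\cdot)f)_\cH$ produces a finite nonnegative Borel measure $\mu_f$ and real constants $c_f=\Re((f,M(i)f)_\cH)$, $d_f\ge 0$. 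Polarization defines a complex measure $\mu_{f,g}$, and uniqueness of scalar Nevanlinna data (applied to vectors of the form $f+\zeta g$) forces genuine sesquilinearity at the Borel-set level. The bound $|\mu_{f,g}(B)|\le\|\Im(M(i))\|_{\cB(\cH)}\|f\|_\cH\|g\|_\cH$ then yields $\Omega(B),C,D\in\cB(\cH)$ via Riesz representation with $\Omega(B)\ge 0$ and $D\ge 0$. Strong countable additivity follows because for any disjoint $B=\bigcup_j B_j$ the partial sums $S_N=\sum_{j=1}^N\Omega(B_j)$ satisfy $0\le S_N\le\Omega(B)$ with $S_N\to\Omega(B)$ in the weak operator topology, and hence strongly via
\begin{equation*}
\|(\Omega(B)-S_N)f\|_\cH^2\le \|\Omega(B)\|_{\cB(\cH)}\,(f,(\Omega(B)-S_N)f)_\cH\to 0.
\end{equation*}
The Nevanlinna formula \eqref{A.41a}--\eqref{A.42b} then holds sesquilinearly by construction and therefore strongly in $\cH$.

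Parts $(iv)$--$(vi)$ follow by applying the scalar Stieltjes inversion, the scalar atom formula, and the classical identity $\lim_{\varepsilon\downarrow 0}\varepsilon\Re((f,M(\lambda+i\varepsilon)f)_\cH)=0$ to each $(f,M(\cdot)f)_\cH$, and then lifting to operators by polarization combined with the equi-boundedness upgrade from weak to strong convergence. For $(v)$, inserting the Laurent expansion of $M$ at an isolated pole $\lambda_0\in\bbC$ into $(f,M(\cdot)f)_\cH$ reduces to the scalar statement that isolated poles of Herglotz functions are simple, real, and have nonpositive residue; reality of $\lambda_0$ is reinforced by \eqref{A.36}, and the operator inequality on the residue follows by applying the scalar conclusion to every $f\in\cH$. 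Part $(vii)$ is handled by considering the finite-rank compressions $P_nM(\cdot)P_n$, where $P_n$ projects onto $\mathrm{span}\{e_1,\ldots,e_n\}$; these are matrix-valued Herglotz functions whose measures are countably additive in operator norm by the finite-dimensional case, and compactness of $\Omega(\bbR)\le\Im(M(i))\in\cB_\infty(\cH)$ supplies the uniform tightness estimate $\sup_{B\in\mathfrak{B}(\bbR)}\|(I_\cH-P_n)\Omega(B)\|_{\cB(\cH)}\to 0$, which transfers norm additivity from the compressions to $\Omega$. Finally, $(viii)$ is an application of the classical boundary behavior (Plemelj--Sokhotski type) of the Cauchy integral of a complex Borel measure of finite total variation, applied term by term in \eqref{A.41a} to the $\cH$-valued measure $\Omega(\cdot)f$.

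The step I expect to be the main obstacle is the norm countable additivity in $(vii)$: upgrading strong additivity to norm additivity genuinely requires a uniform tightness property of $\Omega(\cdot)$, and extracting this from mere compactness of $\Omega(\bbR)$ (rather than from a uniform compactness hypothesis on the family $\{\Omega(B)\}_{B\in\mathfrak{B}(\bbR)}$) is the most delicate technical point of the proof.
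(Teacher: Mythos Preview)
Your overall scheme---reduce to scalar Herglotz theory via the quadratic forms $(f,M(\cdot)f)_\cH$, reassemble by polarization and the Riesz correspondence---is exactly the route the paper takes. The serious gap is your repeated appeal to an ``equi-boundedness upgrade from weak to strong convergence.'' Uniform norm-boundedness together with weak operator convergence does \emph{not} imply strong convergence (take $T_n=(\,\cdot\,,e_n)_\cH e_n$ on $\ell^2$). The paper does not argue this way. Instead, it systematically exploits monotonicity: for families $0\le B_\varepsilon$ that are monotone in $\varepsilon$ and bounded above, weak convergence implies strong convergence (Vigier's theorem, \cite[p.\ 263]{RS90}). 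Your own argument for strong countable additivity of $\Omega$ in $(iii)$, via $\|(\Omega(B)-S_N)f\|_\cH^2\le\|\Omega(B)\|_{\cB(\cH)}(f,(\Omega(B)-S_N)f)_\cH$, is precisely this mechanism in disguise; but you then abandon it in $(iv)$--$(vi)$ where it is most needed. In particular, for the first relation in $(vi)$ the kernel $\varepsilon(t-\lambda)/((t-\lambda)^2+\varepsilon^2)$ is \emph{not} monotone in $\varepsilon$, and the paper handles this by explicitly splitting it into three monotone pieces $\psi_1+\psi_2-1/2$ so that Vigier applies termwise. Without such a device your argument for $\slim_{\varepsilon\downarrow0}\varepsilon\Re(M(\lambda+i\varepsilon))=0$ does not go through.

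Two smaller points. For $(ii)$, polarization from the diagonal values $(e_j,M(\cdot)e_j)_\cH$ alone does not recover off-diagonal entries; polarization requires the diagonal at $e_j\pm e_k$, $e_j\pm ie_k$, which you have not controlled. The paper instead passes through the representation $(iii)$ and uses that $\sum_j 2^{-j}(e_j,\Omega(\cdot)e_j)_\cH$ is a scalar control measure for $\Omega$. For $(vii)$, your compression-and-tightness idea is different from the paper's (which simply invokes the compact analogue of Vigier: a monotone, uniformly bounded family in $\cB_\infty(\cH)$ converges in norm). Your route can be made to work---the key estimate $\sup_B\|(I_\cH-P_n)\Omega(B)\|_{\cB(\cH)}\to0$ follows from $\Omega(B)^{1/2}=C_BK^{1/2}$ with $\|C_B\|\le1$ and $K^{1/2}=\Omega(\bbR)^{1/2}\in\cB_\infty(\cH)$ via Douglas' lemma---so this is not the obstacle you feared; the real obstacle is the weak-to-strong step above.
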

\noindent {\it Sketch of proof.} 
Item $(i)$ is clear and it implies items $(ii)$ together with the fact that 
$\sum_{j\in\bbN} 2^{-j} (e_j, \Omega(\cdot) e_j)_{\cH}$ represents a (scalar) control 
measure for $\Omega(\cdot)$. 

That equations \eqref{A.41a}--\eqref{A.42b} hold in the strong sense in $\cH$ and 
the validity of the Stieltjes inversion formula \eqref{A.43} were proved by Allen and 
Narcowich \cite{AN76}. Their proofs rely on the polarization identity and the one-to-one correspondence between bounded, symmetric sesquilinear forms on $\cH$ and the 
set of bounded self-adjoint operators $\cB(\cH)$ on $\cH$. We also note that the proof of Theorem \ref{tA.7} in the case where strong convergence is replaced by weak 
convergence readily follows from the corresponding scalar version (see also the
matrix-valued case studied, e.g., in \cite[Theorems\ 5.4 and 5.5]{GT00}). The various extensions from weak convergence to strong convergence in Theorem \ref{tA.7} 
then repeatedly use a standard result on monotonic sequences of bounded,
nonnegative operators in $\cH$ (called Vigier's theorem in \cite[p.\ 263]{RS90}):
\begin{align}
\begin{split}
& \text{If $0 \leq B_1 \leq B_2 \leq \cdots \leq B_\infty$, with 
$B_n, B_\infty \in \cB(\cH)$, $n\in\bbN$,}    \lb{A.52} \\
& \quad \text{then $\slim_{n\to\infty} B_n = B$ for some $B\in\cB(\cH)$.}
\end{split}
\end{align}
Similarly, recalling the extension of this convergence result to compact operators
(cf.\ \cite[Lemma\ 2.1]{AN76}):
\begin{align}
\begin{split}
& \text{If $0 \leq C_1 \leq C_2 \leq \cdots \leq C_\infty$, with 
$C_n, C_\infty \in \cB_{\infty}(\cH)$, $n\in\bbN$,}      \lb{A.53} \\
& \quad \text{then $\lim_{n\to\infty} \|C_n - C\|_{\cB(\cH)} = 0$ for
some $C\in\cB_{\infty}(\cH)$,}
\end{split}
\end{align}
repeated applications of this fact yield the extensions to $\cB(\cH)$-norm convergence 
in item $(vii)$. Of course, the monotonically increasing and uniformly bounded 
families $\{B_n\}_{n\in\bbN}$ 
and $\{C_n\}_{n\in\bbN}$ in \eqref{A.52} and \eqref{A.53} can be replaced by 
monotonically decreasing families of uniformly bounded operators in $\cH$. 
(For variations of \eqref{A.52} and \eqref{A.53} we also refer to 
\cite[Theorems\ VIII.3.3 and VIII.3.5, Remark VIII.3.4]{Ka80}.)  

In the special case of scalar Herglotz functions $m$ (cf.\ \cite{AD56} and \cite{KK74}
for detailed treatments), isolated zeros of $m$
are well-known to be necessarily simple and located on $\bbR$. This can be inferred 
from the fact that $-1/m$ is a Herglotz function whenever $m$ is one, and hence
isolated poles of $1/m$ are also necessarily simple with a negative residue. 
Studying $(f,M(z)f)_{\cH}$ for all $f \in \cH$ then illustrates item $(v)$. 

That item $(vi)$ holds, in fact, with $\slim_{\varepsilon \downarrow 0}$ rather than 
$\wlim_{\varepsilon \downarrow 0}$ follows again from monotonicity considerations: 
First, (choosing $D=0$ in \eqref{A.42} without loss of generality) one notes that the expression on the left-hand side in \eqref{A.53a} below
\begin{equation}
\varepsilon \Im\bigg(\f{1}{t - (\lambda + i \varepsilon)}\bigg) 
= \f{\varepsilon^2}{(t-\lambda)^2 + \varepsilon^2} \in [0,1], \quad (t, \lambda) \in \bbR^2, 
\; \varepsilon > 0,    \lb{A.53a}
\end{equation}
is nonnegative, uniformly bounded by $1$, and monotonically decreasing 
with respect to $\varepsilon$ as $\varepsilon \downarrow 0$. Moreover,
\begin{equation}
\lim_{\varepsilon \downarrow 0} \varepsilon 
\Im\bigg(\f{1}{t - (\lambda + i \varepsilon)}\bigg) 
= \begin{cases} 0, & t \in \bbR\backslash\{\lambda\}, \\ 
1, & t = \lambda. \end{cases}
\end{equation}
Combining this with the analog of the monotonicity result \eqref{A.52} in the 
decreasing case proves the first equality in \eqref{A.46}. In the remainder of the 
proof of item $(vi)$ we make the simplifying 
assumption that $M$ is of the form 
$M(z) = \int_{\bbR} d\Omega(\lambda) (\lambda - z)^{-1}$, $z \in \bbC_+$, 
which is permitted, without loss of generality, as only local considerations are at 
stake. Since 
\begin{equation} 
\varepsilon \Re\bigg(\f{1}{t - (\lambda + i \varepsilon)}\bigg) 
= \f{\varepsilon (t - \lambda)}{(t-\lambda)^2 + \varepsilon^2} \in [-1/2,1/2], 
\quad (t, \lambda) \in \bbR^2, \; \varepsilon > 0,
\end{equation}
is not monotonic with respect to $\varepsilon$ as $\varepsilon \downarrow 0$, we 
decompose it into three monotonic pieces as follows, 
\begin{equation}
\varepsilon \Re\bigg(\f{1}{t - (\lambda + i \varepsilon)}\bigg) 
= \psi_1(t-\lambda,\varepsilon) + \psi_2(t-\lambda,\varepsilon) -2^{-1},   \lb{A.53b}
\end{equation}
where
\begin{equation}
\psi_1(x,\varepsilon) = \begin{cases} \varepsilon x \big[x^2 + \varepsilon^2\big]^{-1}, 
& |x| \geq \varepsilon,  \\ 1/2, & |x| \leq \varepsilon, \end{cases} \quad 
\psi_2(x,\varepsilon) = \begin{cases} \varepsilon x \big[x^2+\varepsilon^2\big]^{-1}, 
& |x| \leq \varepsilon,  \\ 1/2, & |x| \geq \varepsilon. \end{cases}  
\end{equation}
By monotonicity of each of the three terms with respect to $\varepsilon$, one obtains that 
\begin{align}
& \slim_{\varepsilon \downarrow 0}\varepsilon \Re(M(\lambda + i \varepsilon) 
= \slim_{\varepsilon \downarrow 0} \int_{\bbR} d\Omega(t) 
\f{\varepsilon (t - \lambda)}{(t-\lambda)^2 + \varepsilon^2}   \no \\
& \quad = \slim_{\varepsilon \downarrow 0} 
\int_{\bbR} d\Omega(t) \big[\psi_1(t-\lambda,\varepsilon) 
+ \psi_2(t-\lambda,\varepsilon) -2^{-1} \big] = 0,   \lb{A.53c}
\end{align}
because the corresponding weak limits equal zero by the following well-known 
arguments: Let $f \in \cH$, then
\begin{align}
\begin{split} 
& \bigg|\varepsilon \int_{|t - \lambda|\geq 1} d(f,\Omega(t) f)_{\cH} 
\f{(t - \lambda)}{(t - \lambda)^2 + \varepsilon^2}\bigg|    \\
& \quad \leq \varepsilon 
\int_{|t - \lambda|\geq 1} d(f,\Omega(t) f)_{\cH} |t - \lambda|^{-1} 
 \underset{\varepsilon \downarrow 0}{\longrightarrow} 0.  
\end{split} 
\end{align}
By polarization, also
\begin{equation}
\lim_{\varepsilon \downarrow 0} \bigg|\varepsilon \int_{|t - \lambda|\geq 1} 
d(f,\Omega(t) g)_{\cH} \f{(t - \lambda)}{(t - \lambda)^2 + \varepsilon^2}\bigg| 
= 0, \quad f, g \in \cH. 
\end{equation} 
Next, for $f \in \cH$,
\begin{align} 
\begin{split} 
& \lim_{\varepsilon \downarrow 0} \bigg|\int_{|t - \lambda|\leq 1} d(f,\Omega(t) f)_{\cH} 
\f{\varepsilon (t - \lambda)}{(t - \lambda)^2 + \varepsilon^2}\bigg|    \\
& \quad \leq \lim_{\varepsilon \downarrow 0} \int_{|t - \lambda|\leq 1} d(f,\Omega(t) f)_{\cH} 
\f{\varepsilon |t - \lambda|}{(t - \lambda)^2 + \varepsilon^2} = 0,  
\end{split} 
\end{align}
applying the dominated convergence theorem, as 
\begin{equation}
\f{\varepsilon |t - \lambda|}{(t - \lambda)^2 + \varepsilon^2} \leq \f{1}{2}, 
\quad t \in \bbR, \; \varepsilon > 0.   
\end{equation}
Again by polarization, 
\begin{equation}
\lim_{\varepsilon \downarrow 0} \bigg|\varepsilon \int_{|t - \lambda|\leq 1} 
d(f,\Omega(t) g)_{\cH} \f{(t - \lambda)}{(t - \lambda)^2 + \varepsilon^2}\bigg| 
= 0, \quad f, g \in \cH,
\end{equation} 
completing the proof of  
\begin{equation}
\wlim_{\varepsilon \downarrow 0}\varepsilon \Re(M(\lambda + i \varepsilon)) = 0. 
\end{equation}
Thus, \eqref{A.53c} together with the first equality in \eqref{A.46}, 
then also prove the second equality in \eqref{A.46} and hence completes the proof of 
item $(vi)$.

Item $(viii)$ is a consequence of \cite[Subsections\ 1.2.4 and 1.2.5]{BW83} (which in 
turn are based on \cite{As67}). 
$\hspace*{10cm} \square$

\smallskip

As usual, the normal limits in Theorem \ref{tA.7} can be replaced by nontangential 
ones. 

The nature of the boundary values of $M(\cdot + i 0)$ when for some $p>0$, 
$M(z) \in \cB_p(\cH)$, $z \in \bbC_+$, was clarified in detail in \cite{BE67}, 
\cite{Na89}, \cite{Na90}, \cite{Na94}.  

Using an approach based on operator-valued Stieltjes integrals, a special case
of Theorem \ref{tA.7} was proved by Brodskii \cite[Sect.\ I.4]{Br71}. In particular,
he proved the analog of the Herglotz representation for operator-valued 
Caratheodory functions. More precisely, if $F$ is analytic on $\dD$ with nonnegative 
real part $\Re(F(w)) \geq 0$, $w \in \D$, then $F$ is of the form
\begin{align}
\begin{split}
& F (w) = i \Im(F(0)) + \oint_{\dD} d\Upsilon (\zeta) \, \f{\zeta + w}{\zeta - w},
\quad w \in \D,      \lb{A.54} \\
& \Re (F (0)) = \Upsilon(\dD),
\end{split}
\end{align}
with $\Upsilon$ a bounded, nonnegative $\cB(\cH)$-valued measure on $\dD$. The result \eqref{A.54} can also be derived by an application of Naimark's dilation
theory (cf.\ \cite{AN76} and \cite[p.\ 68]{Fi70}), and it can also be used to derive the Nevanlinna representation \eqref{A.41a}, \eqref{A.42} (cf.\ \cite{AN76}, and in a 
special case also \cite[Sect.\ I.4]{Br71}). Finally, we also mention that 
Shmuly'an \cite{Sh71} discusses the Nevanlinna representation \eqref{A.41a}, 
\eqref{A.42}; moreover, certain special classes of Nevanlinna functions, isolated 
by Kac and Krein \cite{KK74} in the scalar context, are studied by Brodskii \cite[Sect.\ I.4]{Br71} and Shmuly'an \cite{Sh71}.

For a variety of applications of operator-valued Herglotz functions, see, for 
instance, \cite{AL95}, \cite{ABMN05}, \cite{ABT11}, \cite{BMN02}, \cite{Ca76}, 
\cite{DM91}--\cite{DM97}, \cite{GKMT01}, 
\cite{MM02}--\cite{MN11}, \cite{Sh71}, and the literature cited therein.

\medskip

{\bf Acknowledgments.} We are indebted to Mark Malamud for a critical reading 
of our manuscript and for numerous helpful suggestions. We are also grateful 
to Joe Diestel for his help in clarifying the role of the Radon-Nikodym property in 
connection with vector-valued absolutely contionuous functions.   


\end{document}